\theoremstyle{plain}
\newtheorem{theorem}[equation]{Theorem}
\newtheorem{lemma}[equation]{Lemma}
\newtheorem{prop}[equation]{Proposition}
\newtheorem{cor}[equation]{Corollary}
\newtheorem{utheorem}{\textrm{\textbf{Theorem}}}
\theoremstyle{definition}
\newtheorem{defn}[equation]{Definition}
\newtheorem{remark}[equation]{Remark}
\newtheorem{qn}[equation]{Question}
\numberwithin{equation}{section}
\newcommand{\wt}{\operatorname{wt}}
\newcommand{\conv}{\operatorname{conv}}
\newcommand{\Fin}{\operatorname{Fin}}
\newcommand{\supp}{\operatorname{supp}}
\newcommand{\hgt}{\operatorname{ht}}
\newcommand{\ch}{\operatorname{ch}}
\newcommand{\disp}{\overset{\rightarrow}{\ell}}
\newcommand{\V}{\ensuremath{\mathbb{V}}}
\newcommand{\vla}{\ensuremath{\mathbb{V}^\lambda}}
\newcommand{\F}{\ensuremath{\mathbb{F}}}
\newcommand{\R}{\ensuremath{\mathbb{R}}}
\newcommand{\C}{\ensuremath{\mathbb{C}}}
\newcommand{\bba}{\ensuremath{\mathbb{A}}}
\newcommand{\Z}{\ensuremath{\mathbb{Z}}}
\newcommand{\calp}{\ensuremath{\mathcal{P}}}
\newcommand{\liehr}{\ensuremath{\mathfrak{h}_{\mathbb{R}}}}
\newcommand{\wtvla}[1]{\ensuremath{\wt_{#1} L(\lambda)}}
\newcommand{\lie}[1]{\ensuremath{\mathfrak{#1}}}
\begin{document}
\title{Faces and maximizer subsets of highest weight modules}

\author{Apoorva Khare}

\dedicatory{Dedicated to the memory of D.-N.~Verma,\\
from whom I first learned representation theory}

\email[A.~Khare]{\tt khare@stanford.edu}

\address{Departments of Mathematics and Statistics, Stanford University,
Stanford, CA 94305, USA}

\date{\today}

\subjclass[2010]{Primary: 17B10; Secondary: 17B20, 52B15, 52B20}

\keywords{Weak $\bba$-face, (pure) highest weight module, Weyl polytope,
parabolic Verma module}

\begin{abstract}
In this paper we study general highest weight modules
$\mathbb{V}^\lambda$ over a complex finite-dimensional semisimple Lie
algebra $\mathfrak{g}$. We present three formulas for the set of weights
of a large family of modules $\mathbb{V}^\lambda$, which include but are
not restricted to all simple modules and all parabolic Verma modules.
These formulas are direct and do not involve cancellations, and were not
previously known in the literature. Our results extend the notion of the
Weyl polytope to general highest weight $\mathfrak{g}$-modules
$\mathbb{V}^\lambda$.

We also show that for all simple modules, the convex hull of the weights
is a $W_J$-invariant polyhedron for some parabolic subgroup $W_J$. We
compute its vertices, faces, and symmetries -- more generally, we also do
this for all parabolic Verma modules, and for all modules
$\mathbb{V}^\lambda$ with highest weight $\lambda$ not on a simple root
hyperplane. To show our results, we extend the notion of convexity to
arbitrary additive subgroups $\mathbb{A} \subset (\mathbb{R},+)$ of
coefficients. Our techniques enable us to completely classify ``weak
$\mathbb{A}$-faces'' of the support sets ${\rm wt}(\mathbb{V}^\lambda)$,
in the process extending classical results of Satake, Borel--Tits,
Vinberg, and Casselman, as well as modern variants by
Chari--Dolbin--Ridenour and Cellini--Marietti, to general highest weight
modules.
\end{abstract}
\maketitle

\settocdepth{section}
{\tableofcontents}

\section{Introduction}

This paper contributes to the study of highest weight modules over a
complex finite-dimensional semisimple Lie algebra. Some of these, such as
finite-dimensional simple modules and (generalized/parabolic) Verma
modules, are classical and well understood. However, more work needs to
be done for infinite-dimensional ``non-Verma'' highest weight modules
(and even for finite-dimensional modules). Important questions such as
the set of weights of these modules, or the multiplicities of these
weights are not fully resolved as yet.

Fix a complex finite-dimensional semisimple Lie algebra $\lie{g}$, a set
of simple roots $\Delta$ in the space $\lie{h}^*$ of weights, the
associated Weyl group $W$ and root space decomposition for $\lie{g}$, and
an arbitrary weight $\lambda \in \lie{h}^*$.
In this paper we present three formulas for computing the supports --
i.e., sets of weights -- of all simple highest weight modules, denoted
hereafter by $L(\lambda)$.
While the support of simple modules $L(\lambda)$ was known for special
sub-families (dominant integral $\lambda$, or antidominant $\lambda$),
the answer was not known in general.
Our formulas are direct and do not involve any cancellations, unlike the
more sophisticated Weyl Character Formula for computing weight
multiplicities. Moreover, one of our formulas uses finite-dimensional
submodules for a distinguished Levi subalgebra, while another is in terms
of the convex hull of the weights (i.e., the set of all convex linear
combinations of weights). See Corollary \ref{Csimple}, and more
generally, Theorem \ref{Twtgvm} in Section \ref{S2}.

In this paper we are interested in studying the larger family of general
\textit{highest weight modules}. These are precisely the quotients of
Verma modules $M(\lambda)$, and we will denote such a module by $\vla$ to
indicate its highest weight. Thus, $M(\lambda) \twoheadrightarrow \vla
\twoheadrightarrow L(\lambda)$.
An obvious consequence of our formulas for the support of simple modules
$L(\lambda)$ is that the support of a general module $\vla$ is determined
from the multiplicities $[\vla : L(w \bullet \lambda)]$ of its
Jordan--Holder factors (which lie in the Bernstein--Gelfand--Gelfand
Category $\mathcal{O}$).
A more direct attempt to compute $\wt \vla$ is to prove an analogue of
Corollary \ref{Csimple} or Theorem \ref{Twtgvm} for all $\vla$. However,
these results fail to hold for all $\vla$; see Theorem \ref{Twtsimple}
below. Nevertheless, the techniques used in proving Theorem \ref{Twtgvm}
yield many other rewards, including:
\begin{itemize}
\item Computing the weights and their convex hulls, for other families of
highest weight modules $\vla$. These modules $\vla$ are
infinite-dimensional, whence their sets of weights $\wt \vla$ are
infinite. We are nevertheless able to show that their convex hulls are
polyhedra -- i.e., \textit{finite} intersections of half-spaces in
Euclidean space. This includes all Verma and simple modules.

\item Classifying the symmetries and faces of these convex hulls, and (in
related work \cite{Khminmax},) classifying all inclusion relations
between these faces.

\item Classical results in the literature by Satake, Borel--Tits,
Vinberg, and Casselman, which were known only for finite-dimensional
simple modules, are now shown for all highest weight modules.

\item A longer-term goal involves computing weight multiplicities of
highest weight modules. We are able to obtain some results along these
lines, by extending the Weyl character formula under somewhat different
assumptions than in the literature. See Theorem \ref{Twcf} and the
preceding remarks.
\end{itemize}

Another feature of this paper is to focus on several important families
of highest weight modules that feature prominently in the literature:
\begin{enumerate}[(i)]
\item Parabolic Verma modules, which include all Verma modules.
\item All simple highest weight modules $L(\lambda)$.
\item All highest weight modules $\vla$ with $\lambda$ not on a simple
root hyperplane. These include all antidominant weights $\lambda$ (whence
$\vla = M(\lambda) = L(\lambda)$) as well as all regular weights
$\lambda$.
\end{enumerate}

\noindent We also consider a fourth class of highest weight modules
termed ``pure'' modules. These modules feature in the classification of
all simple $\lie{h}$-weight $\lie{g}$-modules, in work of Fernando
\cite{Fe}. In this paper we provide a wide variety of techniques for
studying all of these families of modules. Thus, a module that lies in
more than one of these families can be studied in more than one way. For
instance, two of our main results, Theorems \ref{T2} and \ref{T3}, hold
for four different kinds of highest weight modules. Corresponding to
these, there are multiple proof techniques presented in this paper.

\subsection{Organization of the paper}

We now briefly outline the rest of the paper. In Section \ref{S2}, we set
notation and write down the main results of the paper, discussing along
the way several motivating questions and related results in the
literature.
The remaining sections are dedicated to proving these main results.
In particular, as discussed above, we present several approaches to
proving the main results for different families of highest weight
modules. For instance, the algebraic approach in Section \ref{S4} avoids
using the Weyl group in studying modules $\vla$ when $\lambda$ avoids the
simple root hyperplanes; this approach provides an alternate proof of
some of the results in \cite{KR} for all highest weight modules $\vla$
over a dense set of weights $\lambda$.
In Sections \ref{S5} and \ref{Sfer}, we prove three of the main results
on the structure of highest weight modules $\vla$, by computing the
convex hull, stabilizer subgroup, and vertices of (the hull of) the
weights of $\vla$.
There are also two applications of our techniques and results. In Section
\ref{Sappl1}, we compute the support of all simple modules $L(\lambda)$
(and others). In Section \ref{Sappl2}, we compute the unique ``largest''
and ``smallest'' highest weight modules with specified convex hull of
weights.

\section{Main results and literature survey}\label{S2}

We now describe the main results of the paper on general highest weight
modules $\vla$. Along the way, we discuss several motivating questions
and related results in the literature, which, somewhat surprisingly, only
study simple finite-dimensional modules and parabolic Verma modules.

\subsection{Notation and preliminaries}

We begin by writing down some basic notation and results on linear
combinations and on Verma modules; these will be freely used without
reference in what follows.
Let $\R \supset \F \supset \mathbb{Q} \supset \Z$ denote the real
numbers, a (possibly fixed) subfield, the rationals, and the integers
respectively. Given an $\R$-vector space $\V$ and $R \subset \R,\ X,Y
\subset \V$, define $X \pm Y$ to be their Minkowski sum $\{ x \pm y : x
\in X, y \in Y \}$, $R_+ := R \cap [0,\infty)$, and $RX$ to be the set of
all finite linear combinations $\sum_{i=1}^k r_i x_i$, where $r_i \in R$
and $x_i \in X$. (This includes the empty sum $0$ if $k=0$.) Let
$\conv_\R(X)$ denote the set of convex $\R_+$-linear combinations of $X$.

Let $\lie g$ be a complex finite-dimensional semisimple Lie algebra with
a fixed triangular decomposition $\lie g = \lie n^+ \oplus \lie h \oplus
\lie n^-$. Let the corresponding root system be $\Phi$, with simple roots
$\Delta := \{ \alpha_i : i \in I \}$ and corresponding fundamental
weights $\Omega := \{ \omega_i : i \in I \}$ both indexed by $I$.
For any $J \subset I$, define $\Delta_J := \{ \alpha_j : j \in J \}$, and
$\Omega_J$ similarly. Set $\rho_J := \sum_{j \in J} \omega_j$, and define
$W_J$ to be the subgroup of the Weyl group $W$ (of $\lie g$), generated
by the simple reflections $\{ s_j = s_{\alpha_j} : j \in J \}$.
Let $\liehr^*$ be the real form of $\lie{h}^*$ -- i.e., the $\R$-span of
$\Delta$. Then $\liehr^* = \R \Omega$ as well. The \textit{height} of a
weight $\mu = \sum_{i \in I} r_i \alpha_i \in \liehr^*$ is defined as
$\hgt \mu := \sum_i r_i$. Moreover, $\lie{h}^*$ has a standard partial
order via: $\lambda \geq \mu$ if $\lambda - \mu \in \Z_+ \Delta$. Now let
$P := \Z \Omega \supset Q := \Z \Delta$ be the weight and root lattices
in $\liehr^*$ respectively, and define
\begin{equation}\label{Edef}
P^+_J := \Z_+ \Omega_J, \quad Q^+_J := \Z_+ \Delta_J, \quad P^+ := P^+_I,
\quad Q^+ := Q^+_I, \quad \Phi^\pm_J := \Phi \cap \pm Q^+_J, \quad
\Phi^\pm := \Phi^\pm_I.
\end{equation}

\noindent Thus, $P^+ = P^+_I$ is the set of dominant integral weights.
Let $(,)$ be the positive definite symmetric bilinear form on $\liehr^*$
induced by the restriction of the Killing form on $\lie g$ to $\liehr$.
Then $(\omega_i, \alpha_j) = \delta_{i,j} (\alpha_j, \alpha_j) / 2\
\forall i,j \in I$.
Define $h_i$ to be the unique element of $\lie{h}$ identified with $(2 /
(\alpha_i, \alpha_i)) \alpha_i$ via the Killing form. The $h_i$ form
a basis of $\lie{h}_\R$. Now fix a set of Chevalley generators $\{
x_{\alpha_i}^\pm \in \lie n^\pm : i \in I \}$ such that $[x_{\alpha_i}^+,
x_{\alpha_j}^-] = \delta_{ij} h_i$ for all $i,j \in I$. Also extend $(,)$
to all of $\lie{h}^*$. Then,
\begin{equation}\label{Efacts}
\alpha_i(h_i) = 2, \quad \omega_j(h_i) = \delta_{i,j}, \quad \lambda(h_i)
= \frac{2 (\lambda, \alpha_i)}{(\alpha_i, \alpha_i)}, \qquad \forall i,j
\in I,\ \lambda \in \lie{h}^*.
\end{equation}

Define $M(\lambda)$ to be the Verma module with highest weight $\lambda
\in \lie{h}^*$. In other words, set $M(\lambda) := U\lie{g} / U
\lie{g}(\lie{n}^+ + \ker \lambda)$.
This is an $\lie{h}$-semisimple, cyclic $\lie{g}$-module which has a
unique simple quotient $L(\lambda)$. Moreover, $M(\lambda)$ is
``universal'' among the set of $\lie{g}$-modules generated by a vector of
weight $\lambda$ that is killed by $\lie{n}^+$. Every module in this
latter set is called a \textit{highest weight module} and we will denote
a typical such module by $\vla$. Thus, $M(\lambda) \twoheadrightarrow
\vla \twoheadrightarrow L(\lambda)$. Additionally, $M(\lambda)$ has a
finite Jordan--Holder series. The composition factors are necessarily of
the form $L(w \bullet \lambda)$ with $\lambda - w \lambda \in \Z_+
\Delta$, where $\bullet$ denotes the twisted action of the Weyl group on
$\lie{h}^*$: $w \bullet \lambda := w(\lambda + \rho_I) - \rho_I$.

Finally, the $\lambda$-weight space of an $\lie{h}$-module $M$ is
$M_\lambda := \{ m \in M : h m = \lambda(h) m\ \forall h \in \lie{h} \}$.
We say that $M$ is a ($\lie{h}$-)weight module if $M = \bigoplus_{\lambda
\in \lie{h}^*} M_\lambda$. If moreover $\dim M_\lambda < \infty\ \forall
\lambda \in \lie{h}^*$, the formal character of $M$ is defined to be $\ch
M := \sum_{\lambda \in \lie{h}^*} (\dim M_\lambda) e^\lambda \in
\Z_+^{\lie{h}^*}$.
Submodules and quotient modules of weight modules are weight modules. It
is clear that $M(\lambda)$ is a weight module with finite-dimensional
weight spaces. Moreover, $M(\lambda)$ is a free $U(\lie{n}^-)$-module of
rank one by the PBW theorem, whose weights are precisely $\lambda - Q^+
= \lambda - \Z_+ \Delta$ and whose formal character is given by the
(translated) Kostant partition function. For a thorough treatment of
Verma modules and their simple quotients (as well as a distinguished
category $\mathcal{O}$ in which they all lie), the reader is referred to
the comprehensive book by Humphreys \cite{H3}.

\subsection{Motivation 1: weights and their hulls of simple and Verma
modules}

Our first motivation comes from the classical question of computing the
support and weight multiplicities of highest weight modules $\vla$. It
turns out that not much is known about simple modules $L(\lambda)$ (or
highest weight modules other than parabolic Verma modules), save for two
special families of simple modules. The first is the set of
\textit{antidominant} highest weights $\lambda$ -- i.e., $2(\lambda +
\rho_I, \alpha)/(\alpha,\alpha) - 1 \notin \Z_+$ for all $\alpha \in
\Phi^+$. In this case, $M(\lambda)$ is simple, e.g. by \cite[Theorem
4.8]{H3}. It follows that $\wt L(\lambda) = \wt M(\lambda) = \lambda -
\Z_+ \Delta$, and one verifies this equals $(\lambda - \Z \Delta) \cap
\conv_\R \wt L(\lambda)$.

The interesting phenomena occur at the ``opposite end'' (this is made
precise presently), for dominant integral $\lambda$, i.e., $\lambda \in
P^+$. Simple modules for such $\lambda$ yield symmetries, combinatorial
formulas, as well as crystals. It is standard (see \cite[Chapter 2]{H3})
that $\dim L(\lambda) < \infty$ if and only if $\lambda \in P^+$, in
which case,
\[
L(\lambda) = M(\lambda) \ / \ \sum_{i \in I} U \lie{g}
(x^-_{\alpha_i})^{\lambda(h_i) + 1} m_\lambda.
\]

We now state two results that will be used repeatedly in the paper. The
first of these results provides a recipe to compute the support of any
finite-dimensional highest weight module $L(\lambda)$, via its convex
hull.

\begin{theorem}\label{Tklv}
Notation as above. Define $\calp(\lambda) := \conv_\R \wtvla{}$ for
$\lambda \in \lie{h}^*$. Now fix $\lambda,\mu \in P^+$.
\begin{enumerate}
\item (\cite[Proposition 7.13 and Theorem 7.41]{Ha}.)
$\calp(\lambda)$ equals the convex hull of the Weyl orbit of $\lambda$:
$\calp(\lambda) = \conv_\R W(\lambda)$, where $W(\lambda) := \{
w(\lambda) : w \in W \}$ is the Weyl group orbit of $\lambda$. Moreover,
$\wtvla{} = (\lambda - \Z \Delta) \cap \calp(\lambda)$.

\item (\cite[Proposition 2.2]{KLV}.)
$\lambda - \mu \in \Z_+ \Delta$ if and only if $\conv_\R W(\mu) \subset
\conv_\R W(\lambda)$.
\end{enumerate}
\end{theorem}

\noindent Notice by the above discussion that $\wtvla{} = (\lambda - \Z
\Delta) \cap \calp(\lambda)$ for antidominant $\lambda$ as well, since in
this case $L(\lambda)$ is the Verma module. Given these two families of
simple modules, the following question is natural (and was posed to us by
D.~Bump):

\begin{qn}\label{Qbump}
Is it true that $\wt L(\lambda) = (\lambda - \Z \Delta) \cap \conv_\R \wt
L(\lambda)$ for arbitrary $\lambda \in \lie{h}^*$?
\end{qn}

This question has a positive answer; see Corollary \ref{Csimple} below.
Indeed, we go beyond the above question, in that we also describe
explicitly the set of weights $\wt L(\lambda)$ as a disjoint union of
$W_{J_\lambda}$-stable sets, where we define for $\lambda \in \lie{h}^*$:
\begin{equation}
J_\lambda := \{ i \in I : \lambda(h_i) \in \Z_+ \}.
\end{equation}

\noindent Our formulas in Corollary \ref{Csimple} specialize to the cases
of dominant integral $\lambda$, where $J_\lambda = I$, and to
antidominant $\lambda$, where $J_\lambda$ is empty -- thus, these two
families are at ``opposite ends''.

Clearly, Question \ref{Qbump} also has a positive answer for all Verma
modules.
Moreover, in Theorem \ref{Twtgvm} we will show similar formulas to those
in Corollary \ref{Csimple}, now for all Verma modules (and others). In
order to reconcile these results, we now discuss a larger family of
highest weight modules that encompasses both Verma and finite-dimensional
modules: namely, parabolic Verma modules.

\begin{defn}\label{Dgvm}\hfill
\begin{enumerate}
\item For $J \subset I$, let $\lie{g}_J$ denote the semisimple Lie
subalgebra of $\lie{g}$ generated by $\{ x_{\alpha_j}^\pm : j \in J \}$.

\item Define the parabolic Lie subalgebra $\lie{p}_J := \lie{g}_J +
\lie{h} + \lie{n}^+$ for all $J \subset I$. Now given $\lambda \in
\lie{h}^*$ and $J \subset J_\lambda$, define the corresponding
\textit{parabolic Verma module} with highest weight $\lambda$ to be
$M(\lambda,J) := U(\lie{g}) \otimes_{U(\lie{p}_J)} L_J(\lambda)$. Here,
$L_J(\lambda)$ is a simple finite-dimensional highest weight module over
the Levi subalgebra $\lie{h} + \lie{g}_J$; it is also killed by
$\lie{g}_{I \setminus J} \cap \lie{n}^+$ (in $M(\lambda, J)$).
\end{enumerate}
\end{defn}

Parabolic Verma modules thus unite Verma and simple modules as desired:
$M(\lambda,\emptyset) = M(\lambda)$ is a Verma module for all $\lambda
\in \lie{h}^*$, while if $\lambda \in P^+$, then $J_\lambda = I$ and
$M(\lambda,I)$ is the finite-dimensional simple module $L(\lambda)$.
These modules were introduced and studied by Lepowsky in a series of
papers; see \cite{Le} and the references therein. They are also called
``generalized'' or ``relative'' Verma modules; see \cite[\S 9.4]{H3}.
The following basic properties of $M(\lambda,J)$ will be used below
without reference.

\begin{theorem}[{\cite[Chapter 9]{H3}}]
Suppose $\lambda \in \lie{h}^*$ and $J \subset J_\lambda$.
\begin{enumerate}
\item $M(\lambda,J)$ is a $\lie{g}_J$-integrable $\lie{g}$-module
generated by a highest weight vector $m_\lambda$, with relations:
\[
\lie{n}^+ m_\lambda = (\ker \lambda) m_\lambda =
(x_{\alpha_j}^-)^{\lambda(h_j) + 1} m_\lambda = 0, \qquad \forall j \in
J.
\]

\item The formal character of $M(\lambda,J)$ (and hence $\wt
M(\lambda,J)$) is $W_J$-invariant.
\end{enumerate}
\end{theorem}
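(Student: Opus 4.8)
The plan is to derive everything from the defining tensor-product construction $M(\lambda,J) = U(\lie{g}) \otimes_{U(\lie{p}_J)} L_J(\lambda)$ together with the PBW theorem. For part (1), I would first recall that $L_J(\lambda)$ is the simple finite-dimensional highest weight module for the Levi subalgebra $\lie{h} + \lie{g}_J$, which exists precisely because $J \subset J_\lambda$ forces $\lambda(h_j) \in \Z_+$ for all $j \in J$; its highest weight vector $m_\lambda$ (of weight $\lambda$) satisfies $(\lie{g}_J \cap \lie{n}^+) m_\lambda = 0$, $(\ker\lambda) m_\lambda = 0$, and the Serre-type relations $(x^-_{\alpha_j})^{\lambda(h_j)+1} m_\lambda = 0$ for $j \in J$ (the last by finite-dimensional $\lie{sl}_2$-theory applied to each $\lie{g}_{\{j\}}$). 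Since $\lie{p}_J = \lie{g}_J + \lie{h} + \lie{n}^+$ and $\lie{n}^+$ acts on $L_J(\lambda)$ through its quotient $\lie{g}_J \cap \lie{n}^+$ (as $\lie{g}_{I\setminus J}\cap\lie{n}^+$ acts by $0$ on $L_J(\lambda)$ by construction), the vector $1 \otimes m_\lambda$ in $M(\lambda,J)$ inherits $\lie{n}^+ (1\otimes m_\lambda) = 0$, $(\ker\lambda)(1\otimes m_\lambda)=0$, and $(x^-_{\alpha_j})^{\lambda(h_j)+1}(1\otimes m_\lambda)=0$ for $j \in J$. That $M(\lambda,J)$ is generated over $\lie{g}$ by this vector is immediate from the tensor construction and the fact that $L_J(\lambda)$ is generated over $\lie{g}_J$ (hence over $\lie{p}_J$) by $m_\lambda$. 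For $\lie{g}_J$-integrability: by PBW, $M(\lambda,J) \cong U(\lie{n}^-_{I\setminus J}) \otimes_{\C} L_J(\lambda)$ as $\lie{h}$-modules (where $\lie{n}^-_{I\setminus J}$ is spanned by the negative root spaces for roots in $\Phi^- \setminus \Phi^-_J$), and each weight space is a finite direct sum of $\lie{g}_J$-translates of the finite-dimensional module $L_J(\lambda)$, so every element lies in a finite-dimensional $\lie{g}_J$-submodule.

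For part (2), the cleanest route is to read off $\ch M(\lambda,J)$ from the PBW decomposition just described: $\ch M(\lambda,J) = \big(\prod_{\alpha \in \Phi^+ \setminus \Phi^+_J} (1 - e^{-\alpha})^{-1}\big) \cdot \ch L_J(\lambda)$, interpreting the product via the Kostant partition function restricted to $\Z_+(\Phi^+ \setminus \Phi^+_J)$. Now $\ch L_J(\lambda)$ is $W_J$-invariant, being the character of a finite-dimensional $(\lie{h}+\lie{g}_J)$-module — this is the classical statement that weights and multiplicities of a finite-dimensional semisimple Lie algebra representation are Weyl-group invariant. The remaining point is that $W_J$ permutes $\Phi^+ \setminus \Phi^+_J$, equivalently fixes the multiset $\Phi^+ \setminus \Phi^+_J$ (up to sign) in the sense that $w \in W_J$ sends $\Phi^+\setminus\Phi^+_J$ bijectively to $w(\Phi^+\setminus\Phi^+_J)$ and the "sign corrections" are absorbed: concretely, $s_j$ for $j \in J$ permutes $\Phi^+ \setminus (\Phi^+_J)$ among themselves, since $s_j$ only moves $\alpha_j$ among positive roots and any root in $\Phi^+\setminus\Phi^+_J$ has strictly positive coefficient on some $\alpha_i$ with $i \notin J$, which $s_j$ leaves positive. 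Hence the Kostant-type factor $\prod_{\alpha\in\Phi^+\setminus\Phi^+_J}(1-e^{-\alpha})^{-1}$ is also $W_J$-invariant, and the product of two $W_J$-invariant formal characters is $W_J$-invariant. Invariance of $\wt M(\lambda,J)$ (the support) follows at once from invariance of $\ch M(\lambda,J)$.

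I would expect the only genuinely delicate step to be the bookkeeping in part (2) — making precise that $W_J$ preserves the multiset $\Phi^+\setminus\Phi^+_J$ so that the infinite product is manipulated legitimately at the level of formal characters in $\Z_+^{\lie{h}^*}$ (convergence/well-definedness of the product expansion is fine because weight spaces of $M(\lambda,J)$ are finite-dimensional). An alternative, perhaps slicker, argument avoids characters entirely: by part (1), $M(\lambda,J)$ is $\lie{g}_J$-integrable and each weight space is finite-dimensional, so restricting to the copy of $\lie{g}_J$ makes $M(\lambda,J)$ a (possibly infinite) direct sum of finite-dimensional simple $\lie{g}_J$-modules; the weights and multiplicities of each summand are $W_J$-invariant, hence so is $\ch M(\lambda,J)$ and a fortiori $\wt M(\lambda,J)$. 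This integrability-based argument is essentially what \cite[Chapter 9]{H3} records, and I would cite it while including the short PBW computation for completeness; everything else is routine verification of the defining relations and the universal property, and the result already appears in the literature as stated.
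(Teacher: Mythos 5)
The paper cites this result directly from Humphreys \cite[Chapter 9]{H3} and supplies no proof of its own, so there is no argument in the paper to compare with; your proposal is a correct standalone proof, and the integrability-based alternative you give for part (2) is essentially what Humphreys records (as you yourself observe). Your first route for (2), via the PBW factorization $M(\lambda,J)\cong U(\lie{u}^-_J)\otimes_\C L_J(\lambda)$ and the Kostant-partition-function factor, is the other standard argument; the only nontrivial point there — that $W_J$ permutes $\Phi^+\setminus\Phi^+_J$ — you justify correctly by observing that each $s_j$ with $j\in J$ fixes the $\alpha_i$-coefficient of a root for $i\neq j$, so a root with some strictly positive coefficient outside $\Delta_J$ stays positive and stays outside $\Phi^+_J$.

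One slip in the justification for part (1) is worth flagging, even though it does not affect the conclusion. The decomposition of $\lie{n}^+$ relevant here is $\lie{n}^+ = (\lie{g}_J\cap\lie{n}^+)\oplus\lie{u}^+_J$, where $\lie{u}^+_J$ is the nilradical of $\lie{p}_J$, spanned by the root spaces for $\Phi^+\setminus\Phi^+_J$. This nilradical is strictly larger than $\lie{g}_{I\setminus J}\cap\lie{n}^+$ whenever some positive root has support meeting both $J$ and $I\setminus J$. So the parenthetical justification you give — that $\lie{g}_{I\setminus J}\cap\lie{n}^+$ acts by zero — does not by itself explain why $\lie{n}^+$ acts on $L_J(\lambda)$ through the projection to $\lie{g}_J\cap\lie{n}^+$. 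The right reason is that all of $\lie{u}^+_J$ annihilates $L_J(\lambda)$, because $L_J(\lambda)$ is a $\lie{p}_J$-module inflated from the Levi quotient $\lie{p}_J/\lie{u}^+_J\cong\lie{h}+\lie{g}_J$; then $\lie{g}_J\cap\lie{n}^+$ kills $m_\lambda$ since $m_\lambda$ is a $\lie{g}_J$-highest weight vector. (The paper's own phrasing in Definition~\ref{Dgvm} is similarly loose.) The same issue recurs in your PBW step, where the notation $\lie{n}^-_{I\setminus J}$ would ordinarily denote the lower nilpotent part of $\lie{g}_{I\setminus J}$, whereas what you need is the full opposite nilradical $\lie{u}^-_J$; your parenthetical clarification shows you intend the latter, and with that reading the argument is sound.
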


Given the above formulae for the supports of finite-dimensional and Verma
modules, the following question is natural.

\begin{qn}\label{Qgvm}
Can the set of weights of an arbitrary parabolic Verma module
$M(\lambda,J)$ be computed as in Question \ref{Qbump}?
\end{qn}

\noindent We answer Question \ref{Qgvm} affirmatively, for all modules
$M(\lambda,J)$ and others, in Theorem \ref{Twtgvm} below.

\subsection{Main results: 1. Convex hull of weights}

Given the positive answer to Question \ref{Qgvm}, a follow-up question is
if, more generally, $\wt \vla = (\lambda - \Z \Delta) \cap \conv_\R (\wt
\vla)$ for every highest weight module $\vla$. As we show in Theorem
\ref{Twtsimple}, this is \textit{false}. Therefore in this paper, we next
consider the ``weaker'' question of computing convex hulls of weights for
various modules $\vla$. This weaker question is relevant because convex
hulls of weights of highest weight modules are crucially used in
computing the set of weights themselves -- e.g., in Theorem \ref{Twtgvm}
below.

To proceed further, we recall the following notation.

\begin{defn}
A \textit{(convex) polyhedron} is a finite intersection of half-spaces in
Euclidean space. A \textit{(convex) polytope} is a compact polyhedron.

Given $\lambda \in P^+$, the corresponding \textit{Weyl polytope} is
defined to be $\calp(\lambda) := \conv_\R( \wt L(\lambda))$.
\end{defn}

We now discuss the convex hull of a general highest weight module $\vla$.
If $\vla = M(\lambda)$, this hull is a polyhedron with unique vertex
$\lambda$. On the other hand, if $\lambda$ is dominant integral and $\vla
= L(\lambda)$ is simple, its support $\wt L(\lambda)$ is finite and
$W$-invariant, and the corresponding Weyl polytope as well as $\wt
L(\lambda)$ are determined by Theorem \ref{Tklv}(1).
More generally, the convex hull of parabolic Verma modules is known:

\begin{prop}[Khare and Ridenour, {\cite[Proposition 2.4]{KR}}]\label{Tgvm}
Given $\lambda \in \lie{h}^*$ and $J \subset J_\lambda$, $\conv_\R \wt
M(\lambda,J)$ is a $W_J$-invariant convex polyhedron with vertices
$W_J(\lambda)$. It is the Minkowski sum of the polytope $\conv_\R
W_J(\lambda)$ and the cone $\R_+ (\Phi^- \setminus \Phi_J^-)$.
\end{prop}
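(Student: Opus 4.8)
The plan is to prove the three claims of Proposition~\ref{Tgvm} in the following order: first the Minkowski sum description of $\conv_\R \wt M(\lambda,J)$, then the polyhedrality and vertex statements, which follow from it, and finally the $W_J$-invariance, which is immediate from the $W_J$-invariance of $\ch M(\lambda,J)$ already recorded above. The key structural input is the weight formula for a parabolic Verma module. Since $M(\lambda,J) = U(\lie{g}) \otimes_{U(\lie{p}_J)} L_J(\lambda)$ and $\lie{g} = \lie{n}^-_{I\setminus J} \oplus \lie{p}_J$ as a $\lie{p}_J$-module (where $\lie{n}^-_{I\setminus J}$ is spanned by the root vectors for $\Phi^-\setminus\Phi^-_J$), the PBW theorem gives $M(\lambda,J) \cong U(\lie{n}^-_{I\setminus J}) \otimes_\C L_J(\lambda)$ as $\lie{h}$-modules. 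Hence
\[
\wt M(\lambda,J) = \wt L_J(\lambda) - \Z_+(\Phi^+ \setminus \Phi^+_J) = \bigl((\lambda - \Z_+\Delta_J)\cap \conv_\R W_J(\lambda)\bigr) - \Z_+(\Phi^+\setminus\Phi^+_J),
\]
using the Weyl-polytope description of $\wt L_J(\lambda)$ from Theorem~\ref{Tklv}(1) applied to the semisimple algebra $\lie{g}_J$ (note $\lambda|_{\lie h_J}$ is dominant integral for $\lie{g}_J$ since $J\subset J_\lambda$).

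Taking convex hulls, I would first show $\conv_\R\bigl(\Z_+(\Phi^+\setminus\Phi^+_J)\bigr) = \R_+(\Phi^+\setminus\Phi^+_J)$, i.e.\ that the convex hull of the integer points in this monoid is the full real cone it generates; this is a routine fact about rational polyhedral cones (the monoid is the intersection of the cone with a lattice, and convexity plus the presence of arbitrarily small positive multiples of each generator as $\tfrac1n$ cannot be used — instead one notes $n\alpha \in \Z_+(\Phi^+\setminus\Phi^+_J)$ for $\alpha$ a positive root in the cone and $\tfrac1n \cdot n\alpha \to $ is handled by the fact that for any $r>0$, $r\alpha = \tfrac{r}{n}(n\alpha)$ lies in the convex hull once we also include $0$). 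Then, since $\conv$ of a Minkowski sum is the Minkowski sum of the convex hulls, I get
\[
\conv_\R \wt M(\lambda,J) = \conv_\R W_J(\lambda) + \R_+(\Phi^- \setminus \Phi^-_J),
\]
observing that $\R_+(\Phi^+\setminus\Phi^+_J) = -\R_+(\Phi^-\setminus\Phi^-_J)$ and being careful about the sign convention: the weights go down from $\lambda$, so the recession directions are the negative roots not in $\Phi^-_J$. Here I also use $\conv_\R\bigl((\lambda-\Z_+\Delta_J)\cap\conv_\R W_J(\lambda)\bigr) = \conv_\R W_J(\lambda)$, which is exactly Theorem~\ref{Tklv}(1).

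For polyhedrality: $\conv_\R W_J(\lambda)$ is a polytope (convex hull of the finite set $W_J(\lambda)$), and $\R_+(\Phi^-\setminus\Phi^-_J)$ is a finitely generated cone, so their Minkowski sum is a polyhedron by the Minkowski–Weyl theorem. For the vertex set: a vertex of a Minkowski sum $P + C$ with $P$ a polytope and $C$ a pointed cone must be of the form (vertex of $P$) $+ 0$; since $W_J(\lambda)$ is precisely the vertex set of the Weyl polytope $\conv_\R W_J(\lambda)$ for $\lie{g}_J$, every vertex of $\conv_\R\wt M(\lambda,J)$ lies in $W_J(\lambda)$. Conversely each $w\lambda$, $w\in W_J$, is a vertex: it is exposed by a linear functional that is strictly dominant on the $\lie{g}_J$-side and nonnegative on $\Phi^-\setminus\Phi^-_J$ — concretely a suitable $W_J$-translate of a regular element of the $\lie{g}_J$-dominant chamber that pairs nonnegatively with every root in $\Phi^-\setminus\Phi^-_J$ (such functionals exist because $J$-parabolic dominant weights have this property). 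The main obstacle I anticipate is precisely this last point — producing, for each $w\in W_J$, a separating functional that simultaneously isolates $w\lambda$ among $W_J(\lambda)$ \emph{and} is bounded above on the whole cone direction — and checking that the cone $\R_+(\Phi^-\setminus\Phi^-_J)$ is pointed (which is what makes the vertex argument work); pointedness holds because all these roots lie in the open halfspace $\{\mu : (\mu,\rho_{I\setminus J})<0\}$ together with the $\lie{g}_J$-invariant complement, so no nontrivial nonnegative combination vanishes. The $W_J$-invariance is then free: $W_J$ permutes $W_J(\lambda)$ and fixes $\R_+(\Phi^-\setminus\Phi^-_J)$ setwise (since $W_J$ permutes $\Phi\setminus\Phi_J$), so it preserves the Minkowski sum.
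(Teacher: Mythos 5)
The proposition is cited from \cite{KR} and the present paper does not reproduce a proof, so there is no internal argument to compare against. Your proof is correct in substance and follows the natural route: the PBW decomposition identifies $M(\lambda,J)$, as an $\lie{h}$-module, with $U(\lie{u}^-) \otimes_\C L_J(\lambda)$, where $\lie{u}^-$ is the span of the root spaces for $\Phi^- \setminus \Phi^-_J$; this yields $\wt M(\lambda,J) = \wt L_J(\lambda) - \Z_+(\Phi^+ \setminus \Phi^+_J)$, whence taking convex hulls via $\conv(A+B) = \conv(A) + \conv(B)$ and the identity $\conv_\R(\Z_+ S) = \R_+ S$ for $S = \Phi^+ \setminus \Phi^+_J$ gives the Minkowski-sum description; the Minkowski--Weyl theorem then gives polyhedrality, and pointedness plus the vertex-transfer fact for polytope-plus-pointed-cone completes the vertex identification.

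Two spots need cleaning. Your description of the exposing functional has a sign slip: you write that it should be ``nonnegative on $\Phi^- \setminus \Phi^-_J$,'' but for the functional to be bounded above on the recession cone it must be \emph{nonpositive} there (equivalently, nonnegative on $\Phi^+ \setminus \Phi^+_J$), as your later remark about boundedness in the cone direction indicates you intend. The simplest concrete choice is $\xi = (\rho_I, -)$: it is strictly positive on every $\alpha \in \Phi^+$, hence strictly negative on every cone generator (this gives pointedness and boundedness at once), and it separates $\lambda$ from $W_J(\lambda) \setminus \{\lambda\}$ because $\lambda - w\lambda \in \Z_+ \Delta_J \setminus \{0\}$; $W_J$-invariance then exposes each $w\lambda$. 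Second, the paragraph on $\conv_\R(\Z_+(\Phi^+\setminus\Phi^+_J)) = \R_+(\Phi^+\setminus\Phi^+_J)$ is garbled as written; the clean statement is that since $0$ lies in the monoid, any $v = \sum_k r_k \alpha_k$ with $r_k \geq 0$ equals $\sum_k \tfrac{r_k}{n}(n\alpha_k) + \bigl(1 - \sum_k \tfrac{r_k}{n}\bigr)\cdot 0$, which is a convex combination of monoid elements once $n \geq \sum_k r_k$.
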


However, the structure of $\conv_\R (\wt \vla)$ is not known for other
highest weight modules, not even for infinite-dimensional simple modules
$L(\lambda)$ with non-antidominant highest weights $\lambda$.
Thus, in this paper we seek to answer the following question.

\begin{qn}\label{Qhull}
Fix $\lambda \in \lie{h}^*$ and $M(\lambda) \twoheadrightarrow \vla$. Is
the convex hull $\conv_\R \wt \vla$ of the (infinite) set of weights a
convex polyhedron (i.e., cut out by \textit{finitely} many hyperplanes)?
If so, identify the vertices, extremal rays, and faces of this
polyhedron, as well as its stabilizer subgroup in $W$.
More precisely, can one write down an analogue of Proposition \ref{Tgvm}
for $\vla$, and if so, what set of simple roots plays the role of $J$, in
governing the symmetries of $\wt \vla$?
\end{qn}

This question is answered by two of our two main results. The first is as
follows.

\begin{utheorem}\label{T1}
Given $\lambda \in \lie{h}^*,\ M(\lambda) \twoheadrightarrow \vla$, and
$J \subset I$, define $\wt_J \vla := \wt \vla \cap (\lambda - \Z_+
\Delta_J)$. There exists a unique subset $J(\vla) \subset I$ such that
the following are equivalent:
(a) $J \subset J(\vla)$;
(b) $\wt_J \vla$ is finite; 
(c) $\wt_J \vla$ is $W_J$-stable;
(d) $\wt \vla$ is $W_J$-stable.
Moreover, if $\vla_\lambda$ is spanned by $v_\lambda$, then
\begin{equation}\label{Etop}
J(\vla) := \{ i \in J_\lambda : (x^-_{\alpha_i})^{\lambda(h_i) + 1}
v_\lambda = 0 \},
\end{equation}

\noindent where $J_\lambda = \{ i \in I : \lambda(h_i) \in \Z_+ \}$.
In particular, if $\vla$ is a parabolic Verma module $M(\lambda,J')$ for
$J' \subset J_\lambda$ or a simple module $L(\lambda)$, then $J(\vla) =
J'$ or $J_\lambda$ respectively. 
\end{utheorem}

\noindent (For more equivalent conditions, see Proposition \ref{Pgvm}.)
In particular, $\vla$ is finite-dimensional if and only if $I = J(\vla)$,
in which case $\lambda \in P^+$ and $\vla = M(\lambda,I) = L(\lambda)$.
More generally, Theorem \ref{T1} establishes a ``top'' part for $\vla$
that is a finite-dimensional simple module over the Levi subalgebra
$\lie{h} + \lie{g}_{J(\vla)}$. As we show in this paper and in follow-up
work \cite{Khminmax}, the distinguished set of simple roots $J(\vla)
\subset J_\lambda$ plays a crucial role in the study of the module
$\vla$.

We also show below that the subset $J(\vla)$ is closely related to the
classification theory by Fernando \cite{Fe} for simple weight modules
with finite weight multiplicities. In Proposition \ref{Phwfer}, we show
how to recover $J(\vla)$ from $\vla$, thereby reconciling our results
with those in \cite{Fe}. Fernando's results have an additional connection
to the present paper, in that outside of parabolic Verma modules, we also
study another class of highest weight modules treated by him:

\begin{defn}\label{Dpure}
A $\lie{g}$-module $M$ is \textit{pure} if for each $X \in \lie{g}$, the
set $\{ m \in M \ : \ \dim \C[X]m < \infty \}$ is either $0$ or $M$.
\end{defn}

Returning to Question \ref{Qhull}, note that it has an immediate and
positive answer for all Verma modules, and hence for all antidominant
weights $\lambda$. These weights constitute a Zariski dense set in
$\lie{h}^*$, namely the complement of countably many (affine)
hyperplanes. Thus, all ``non-Verma'' highest weight modules have highest
weights in this countable set of hyperplanes. Our next main result
completely resolves Question \ref{Qhull} for the larger set of highest
weights that avoids only the \textit{finite} set of simple root
hyperplanes. We first give these weights a name.

\begin{defn}
Define $\lambda \in \lie{h}^*$ to be \textit{simply-regular} if
$(\lambda, \alpha_i) \neq 0$ for all $i \in I$.
\end{defn}

Note that antidominant or regular weights are simply-regular, and all
simple $\lie{g}$-modules are pure \cite{Fe}. Now in stating the next
result (and henceforth), by \textit{extremal rays} at a vertex $v$ of a
polyhedron $P$, we mean the infinite length edges of $P$ that pass
through $v$.

\begin{utheorem}\label{T2}
Suppose $(\lambda, \vla)$ satisfy one of the following:
(a) $\lambda \in \lie{h}^*$ is simply-regular and $\vla$ is arbitrary;
(b) $|J_\lambda \setminus J(\vla)| \leq 1$;
(c) $\vla = M(\lambda, J')$ for some $J' \subset J_\lambda$; or
(d) $\vla$ is pure.

\noindent Then the convex hull (in Euclidean space) $\conv_\R \wt \vla
\subset \lambda + \liehr^*$ is a convex polyhedron with vertices
$W_{J(\vla)}(\lambda)$, and the stabilizer subgroup in $W$ of both $\wt
\vla$ and $\conv_\R \wt \vla$ is $W_{J(\vla)}$. If $\lambda$ is
simply-regular, the extremal rays at the vertex $\lambda$ are $\{ \lambda
- \R_+ \alpha_i \ : \ i \notin J(\vla) \}$.
\end{utheorem}

\begin{remark}
Consequently, the notion of the Weyl polytope extends to arbitrary simple
highest weight modules, via: $\calp(\lambda) := \conv_\R \wt L(\lambda) =
\conv_\R \wt M(\lambda, J_\lambda)$.
Note that one now obtains a polyhedron (which is a polytope if and only
if $\lambda \in P^+$, in which case $J(L(\lambda)) = J_\lambda = I$).
Even more generally, one can define $\calp(\vla) := \conv_\R \wt
M(\lambda, J(\vla))$; this is a $W_{J(\vla)}$-invariant convex
polyhedron, which equals $\conv_\R \wt \vla$ when $\lambda$ is
simply-regular, or $\vla = L(\lambda)$ or $M(\lambda, J')$. Moreover, its
translation by $-\lambda$ is a pseudo-Weyl lattice polyhedron, in the
spirit of \cite[\S 2.3]{Kam}.
\end{remark}

\begin{remark}
Although we work with arbitrary $\lambda \in \lie{h}^*$ (a complex vector
space), the only sets we work with in this paper are (convex hulls of)
subsets of $\wt \vla$ for various highest weight modules $M(\lambda)
\twoheadrightarrow \vla$. Thus, the convex hulls of these sets are
translates (via the highest weight) of subsets of $-\R_+ \Delta$. This
means that we essentially work in the real form $\liehr^* \cong \R^I$.
\end{remark}

\subsection{Motivation 2: faces of convex hull of weights}

Our next motivation arises out of the classification of faces of the Weyl
polytope $\calp(\lambda)$ for $\lambda \in P^+$. Weyl polytopes
$\calp(\lambda)$ were carefully studied in a series of classical papers,
including by Satake \cite[\S 2.3]{Sa} in the context of compactifications
of symmetric spaces (see also Casselman \cite[\S 3]{Cas}), the treatise
on algebraic groups by Borel--Tits \cite[\S 12]{BT}, as well as more
recently by Vinberg in \cite[\S 3]{Vi}, where he embedded
Poisson-commutative subalgebras of ${\rm Sym}(\lie{g})$ into $U(\lie{g})$
via the symmetrization map. In these works, the faces of $\calp(\lambda)$
were classified as follows:

\begin{theorem}[Satake \cite{Sa}, Borel--Tits \cite{BT}, Vinberg
\cite{Vi}, Casselman \cite{Cas}]\label{Tvin}
Given $\lambda \in P^+$, the faces of $\calp(\lambda)$ are of the form
$F_{w,J}(\lambda) := w(\conv_\R W_J(\lambda))$ with $w \in W$ and $J
\subset I$. Moreover, every face is $W$-conjugate to a unique dominant
face $F_{1,J}(\lambda)$.
\end{theorem}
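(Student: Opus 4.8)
The plan is to realize each nonempty face of $\calp(\lambda)$ as the maximizer set of a linear functional, use the $W$-invariance of $\calp(\lambda)$ to reduce to \emph{dominant} functionals, identify those faces explicitly, and then establish uniqueness of the dominant representative by a barycenter argument.

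\emph{Step 1 (reduction to dominant functionals).} First, recall that every nonempty face of the polytope $\calp(\lambda)$ has the form $F_\varphi := \{\, x \in \calp(\lambda) : (\varphi, x) = \max_{y \in \calp(\lambda)}(\varphi, y) \,\}$ for some $\varphi \in \liehr^*$, and is the convex hull of the vertices of $\calp(\lambda)$ that it contains. Since $\calp(\lambda)$ is $W$-stable and $(\cdot,\cdot)$ is $W$-invariant, $w F_\varphi = F_{w\varphi}$ for $w \in W$; so, choosing $w$ with $w\varphi$ in the closed dominant chamber $\overline{C} = \{\, \psi \in \liehr^* : (\psi, \alpha_i) \geq 0 \ \forall i \,\}$, it suffices to describe $F_\varphi$ for $\varphi \in \overline{C}$, and every face is then a $W$-translate of such an $F_\varphi$.

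\emph{Step 2 (the dominant faces).} Next, fix $\varphi \in \overline{C}$ and set $J := \{\, i \in I : (\varphi, \alpha_i) = 0 \,\}$. For all $w \in W$ one has $\lambda - w\lambda \in \Z_+\Delta$, so $(\varphi, \lambda - w\lambda) \geq 0$; hence $(\varphi,\cdot)$ is maximized on the vertex set $W(\lambda)$ of $\calp(\lambda)$ at $\lambda$, and $w\lambda$ is a maximizer iff $(\varphi, \lambda - w\lambda) = 0$, which — as $(\varphi,\alpha_i) > 0$ exactly for $i \notin J$ — is equivalent to $\lambda - w\lambda \in \Z_+\Delta_J$. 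I would then check that $\{\, w\lambda : \lambda - w\lambda \in \Z_+\Delta_J \,\} = W_J(\lambda)$: the inclusion ``$\supseteq$'' is the standard fact that $\lambda$ is the top vertex of its $W_J$-orbit, and ``$\subseteq$'' holds because a weight of $L(\lambda)$ lying in both $W(\lambda)$ and $\lambda - \Z_+\Delta_J$ is an extreme weight of the finite-dimensional $(\lie{h}+\lie{g}_J)$-submodule generated by the highest weight vector, hence lies in $W_J(\lambda)$. Therefore $F_\varphi = \conv_\R W_J(\lambda) = F_{1,J}(\lambda)$. Since $J$ ranges over all subsets of $I$ here (e.g.\ $\varphi = \sum_{i \notin J}\omega_i$ produces this $J$), the $F_{1,J}(\lambda)$ are precisely the ``dominant'' faces, and together with Step 1 every face equals $F_{w,J}(\lambda) = w\,\conv_\R W_J(\lambda)$ for some $w \in W$, $J \subseteq I$.

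\emph{Step 3 (uniqueness of the dominant representative).} Finally, note that $J$ is \emph{not} recoverable from $F_{1,J}(\lambda)$ — one may adjoin to $J$ any connected component of the Dynkin subdiagram consisting of simple roots orthogonal to $\lambda$ — but the face is. Put $K := \{\, i \in I : (\lambda,\alpha_i) = 0 \,\}$ and let $\widetilde{J} \subseteq J$ be $J$ with every connected component contained in $K$ removed; then $W_J = W_{\widetilde J} \times W_{J \setminus \widetilde J}$ with the second factor fixing $\lambda$, so $W_J(\lambda) = W_{\widetilde J}(\lambda)$. Let $b_J$ be the barycenter (centroid of the vertex set $W_{\widetilde J}(\lambda)$) of $F_{1,J}(\lambda)$; it equals the orthogonal projection of $\lambda$ onto $(\R\Delta_{\widetilde J})^\perp$, since $W_{\widetilde J}$ has no nonzero fixed vector in $\R\Delta_{\widetilde J}$. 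I would check that $b_J \in \overline{C}$ and that $\lambda - b_J$ is a combination of the simple roots of $\widetilde J$ with \emph{all} coefficients positive (each connected component of $\widetilde{J}$ carries a nonzero dominant weight of a simple Lie algebra, whose expansion in simple roots has all coefficients $>0$). Hence $\widetilde{J} = \supp(\lambda - b_J)$ is determined by $b_J$, so $b_J$ determines $W_{\widetilde J}(\lambda) = W_J(\lambda)$ and $F_{1,J}(\lambda)$. Thus if $F_{1,J}(\lambda)$ and $F_{1,J'}(\lambda)$ are $W$-conjugate, so are their barycenters $b_J, b_{J'}$; both lie in $\overline{C}$, so $b_J = b_{J'}$, whence $F_{1,J}(\lambda) = F_{1,J'}(\lambda)$.

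I expect Step 3 to be the main obstacle: the crucial point is that $J$ is not an invariant of the face, and the fix is to pin down a canonical dominant point of the face — here the barycenter — whose displacement from $\lambda$ has support exactly $\widetilde{J}$. A secondary technical issue is the inclusion ``$\subseteq$'' in Step 2; if one prefers to avoid invoking modules, it can be proved by induction on $\hgt(\lambda - w\lambda)$, using that $w\lambda$ and $\lambda$ agree on $(\R\Delta_J)^\perp$ and have equal norm, which forces the $\lie{g}_J$-component of $w\lambda$ to lie in $W_J$ applied to that of $\lambda$.
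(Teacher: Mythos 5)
Your argument is correct: Step 1 (faces as maximizer sets, reduction to dominant functionals), Step 2 (identifying $F_\varphi = \conv_\R W_J(\lambda)$ for $J = \{i : (\varphi,\alpha_i)=0\}$, with the inclusion $\{w\lambda : \lambda - w\lambda \in \Z_+\Delta_J\} \subset W_J(\lambda)$ justified via the finite-dimensional $\lie{g}_J$-submodule or by the norm argument), and Step 3 (uniqueness via the vertex barycenter) all check out; in particular the key observation that $J$ itself is not an invariant of the face, but that $\widetilde{J} = \supp(\lambda - b_J)$ is, correctly isolates the only delicate point. Note, however, that the paper does not prove this statement at all -- it is quoted from Vinberg's paper [Vi] as motivation -- so the comparison is really with the paper's proofs of its generalizations (Theorems \ref{T3} and \ref{T5} and their supporting results). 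There the existence part is handled quite differently: rather than sweeping over dominant functionals, the paper characterizes the faces containing $\lambda$ algebraically as the sets $\wt_J \vla = (\wt\vla)(\rho_{I\setminus J})$ via the $(\{2\},\{1,2\})$-closedness condition (Theorem \ref{Tface}), and then uses Lemma \ref{L12} to move an arbitrary face to one containing a vertex. For uniqueness, the paper's invariant is not the barycenter of the vertex set but the sum $\rho_Y = \sum_{y\in Y} y$ over \emph{all} weights of the face (Theorem \ref{Tkr1}, Propositions \ref{Pstable} and \ref{P11}), shown to lie in $P^+$ and to be a complete invariant because each $W$-orbit meets the dominant chamber at most once -- the same mechanism you use, applied to a different canonical dominant point. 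Your route has the advantage of being self-contained convex geometry with no input from representation theory beyond the $W_J$-orbit structure of extreme weights; the paper's route buys the extension to infinite-dimensional $\vla$, where the full face need not be a bounded set but $\rho_{\wt_J\vla}$ still makes sense for the finite faces, and where the functional $\rho_{I\setminus J}$ plays the role of your $\varphi$.
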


These results were extended by the author and Ridenour in \cite{KR} to
all parabolic Verma modules. Then in \cite[\S 5]{CM}, Cellini and
Marietti provided an alternate, uniform description for all faces of the
\textit{root polytope} $\calp(\theta)$, where $\theta$ is the highest
root in $\Phi^+$ and $L(\theta) = \lie{g}$ is the adjoint representation.
More generally, one can ask the following:
\begin{itemize}
\item Can Theorem \ref{Tvin} be proved for all highest weight modules
$\vla$?

\item Classify the ``inclusion relations'' between faces, and in
particular, all redundancies -- i.e., which faces $F_{w,J}, F_{w',J'}$
are equal.
\end{itemize}

\noindent Partial results for the second part were shown in
\cite{BT,Cas,Sa}, but only for finite-dimensional simple modules (and
trivially for Verma modules). It was shown in \cite{KR} that $F_{1,J} =
F_{1,J'}$ in $\calp(\lambda)$ for $\lambda \in P^+$, if and only if
$W_J(\lambda) = W_{J'}(\lambda)$. 
Cellini and Marietti then resolved the second question above for the
adjoint representation (when $\lie{g}$ is simple), by showing in
\cite[Proposition 5.9]{CM} that if $\vla = L(\theta) = \lie{g}$, then
$J'$ is any subset of $I$ in some ``interval''. Namely, $\wt_{J'} \lie{g}
= \wt_J \lie{g}$ if and only if there exist $J_{\min}, J_{\max}$
depending only on $J$, such that $J_{\min} \subset J' \subset J_{\max}$.

In Theorem \ref{T3} and Corollary \ref{Cvinberg} below, and then in
follow-up work \cite{Khminmax}, we unify and extend the above results to
all highest weight modules $\vla$. In particular, we completely resolve
for most/all modules $\vla$ the first/second question above, originally
studied by Satake and others for finite-dimensional modules. We also note
that concurrent to the work in \cite{Khminmax}, Li--Cao--Li \cite{LCL}
also proved some of the results in \cite{CM}, for the special case of all
Weyl polytopes.

\subsection{Motivation 3: quantum affine algebras, combinatorics, and
weak faces}

In addition to answering longstanding questions about the structure of
simple (and other) highest weight modules, and the classification of
faces of Weyl polytopes, this paper draws from other research programs in
the literature.
Namely, we are also motivated by the study of quantum affine Lie
algebras, (multigraded) current algebras, Takiff algebras, and
cominuscule parabolics. In studying the former, one encounters an
important class of representations called Kirillov--Reshetikhin (KR)
modules \cite{kire}, which are widely studied because of their
connections to mathematical physics and their rich combinatorial
structure.

There has been tremendous activity in the literature to better understand
KR-modules. One approach is to specialize KR-modules at $q=1$; this
yields $\Z_+$-graded modules over $\lie{g} \ltimes \lie{g}$, which is a
Takiff or truncated current algebra.
As recently shown in \cite{CG3}, such specializations are projective
objects in a suitable category of $\Z_+$-graded $\lie{g} \ltimes
\lie{g}$-modules, which is constructed using a face of the root polytope
$\calp(\theta)$. This helps compute the graded characters of these
modules. Every such face also helps construct families of Koszul algebras
\cite{CG2}. This approach has been extended by the author in joint work
with Chari and Ridenour \cite{CKR} to faces of all Weyl polytopes
$\calp(\lambda)$. The results of \cite{CKR} were further extended in
\cite{BCFM}, where Chari et al used faces of Weyl polytopes to study
multigraded generalizations of KR-modules over multivariable current
algebras. Thus, understanding the faces of $\calp(\lambda)$ and the
relationships between them aids these programs as well.\medskip

There are additional questions that arise from the above program and have
been studied in the literature. In identifying KR-modules at $q=1$ as
projective objects in certain categories of $\lie{g} \ltimes
\lie{g}$-modules, Chari and Greenstein work in \cite{CG2,CG3} with a
subset $S \subset \Phi^+ = \wt \lie{n}^+$ of positive roots, which
satisfies a certain combinatorial condition. Namely, given weights
$\lambda_i \in S$ and $\mu_j \in \wt \lie{g}$,
\begin{equation}\label{ECG2}
\sum_{i=1}^r \lambda_i = \sum_{j=1}^r \mu_j \quad \implies \quad \mu_j
\in S\ \forall j.
\end{equation}

\noindent This condition arises in studying the weights of
$\bigwedge^\bullet \lie{g}$. (This is related to abelian ideals, and we
discuss connections in Remark \ref{Rabelian}.) It was shown in \cite{KR}
how Equation \eqref{ECG2} extends the notion of the face of a polytope.

We now introduce a novel tool used in this paper: that of a \textit{weak
$\bba$-face}. This was introduced in \cite{KR} for $\bba$ an arbitrary
subfield of $\R$; it is now defined in the present paper through its
characterization shown in \cite[Proposition 4.4]{KR}. We further extend
this notion to arbitrary additive subgroups $\bba \subset (\R,+)$; this
helps unify and extend results in the literature, as well as provide
common proofs.

\begin{defn}\label{Dweak}
Fix an $\R$-vector space $\V$, as well as subsets $X \subset \V$ and $R
\subset \R$.
\begin{enumerate}
\item A \textit{supporting hyperplane} to $X$ is any hyperplane
$\varphi(-) = c$, with $c \geq 0$ and $0 \neq \varphi \in \V^*$ such that
$c \in \varphi(X) \subset [0,\infty)$.

\item Define the finitely supported $R$-valued functions on $X$ to be:
\begin{equation}
\Fin(X,R) := \{ f : \V \to R \cup \{ 0 \} : \supp(f) \subset X,\ \#
\supp(f) < \infty \},
\end{equation}

\noindent where $\supp(f) := \{ v \in \V : f(v) \neq 0 \}$. Then
$\Fin(X,R) \subset \Fin(\V, \R)$ for all $X,R$.

\item Define the maps $\ell : \Fin(\V,\R) \to \R$ and $\disp :
\Fin(\V,\R) \to \R \V = \V$ via:
\begin{equation}
\ell(f) := \sum_{x \in \V} f(x), \qquad \disp(f) := \sum_{x \in \V} f(x)
x.
\end{equation}

\item We say that $Y \subset X$ is a \textit{weak $R$-face} of $X$ if for
any $f \in \Fin(X, R_+)$ and $g \in \Fin(Y,R_+)$,
\begin{equation}
\ell(f) = \ell(g) > 0,\ \disp(f) = \disp(g) \implies \supp(f) \subset Y.
\end{equation}

\item Given $X \subset \V$ (where $\V$ is a real or complex vector space)
and $\varphi \in \V^*$, define
\begin{equation}
X(\varphi) := \{ x \in X : \varphi(x) - \varphi(x') \in \R_+\ \forall x'
\in X \}
\end{equation}

\noindent to be the corresponding maximizer subset.
(Note that $\varphi$ is constant on $X(\varphi)$.)
\end{enumerate}
\end{defn}

Henceforth, the notions of polyhedra, polytopes, faces,
and supporting hyperplanes are used without reference. See \cite[\S
2.5]{KR} for definitions and results such as the Decomposition Theorem.

\begin{remark}\label{Rweak}
Weak faces generalize the notion of faces in two ways: first, if $R = \R$
and $X \subset \V$ is convex, then a weak $\R$-face is the same as a
face. Weak $R$-faces involve satisfying the same condition as (weak
$\R$-)faces, but with a different set $R_+$ of coefficients. Second, the
notion is defined and used for non-convex (in fact, discrete) subsets of
$\R^n$. Weak $R$-faces are very useful because they occur in many
settings in representation theory and convexity theory; see Remark
\ref{Rclosed}. There are also connections to distortion in geometric
group theory; see Remark \ref{Rdistort}.
\end{remark}

The following basic results on weak faces are straightforward.

\begin{lemma}\label{L0}
Suppose $Y \subset X \subset \V$, a real or complex vector space, and
$\varphi \in \V^*$. Then every nonempty subset $X(\varphi)$ is a weak
$R$-face of $X$ for all $R \subset \R$. If $\mathbb{B} \subset \R$ is a
subring, then $Y$ is a weak $\mathbb{B}$-face if and only if it is a weak
$\F(\mathbb{B})$-face, where $\F(\mathbb{B})$ is the quotient field of
$\mathbb{B}$.
\end{lemma}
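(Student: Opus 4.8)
The plan is to prove the two assertions of Lemma \ref{L0} separately, each by unwinding Definition \ref{Dweak}.

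For the first assertion, let $\varphi \in \V^*$ and suppose $X(\varphi)$ is nonempty; write $c$ for the (constant) value of $\varphi$ on $X(\varphi)$. I want to show $X(\varphi)$ is a weak $R$-face of $X$ for every $R \subset \R$. So fix $f \in \Fin(X, R_+)$ and $g \in \Fin(X(\varphi), R_+)$ with $\ell(f) = \ell(g) > 0$ and $\disp(f) = \disp(g)$. First I would apply $\varphi$ to the vector equation $\disp(f) = \disp(g)$, using linearity of $\varphi$: this gives $\sum_{x} f(x)\varphi(x) = \sum_y g(y)\varphi(y) = c\, \ell(g) = c\, \ell(f)$. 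On the other hand, by the definition of $X(\varphi)$ we have $\varphi(x) \le c$ for every $x \in X$ (taking $x' \in X(\varphi)$ in the defining inequality, and noting $\varphi$ is real-valued on $X$ since $\R_+$-differences force it), hence $\sum_x f(x)\varphi(x) \le c \sum_x f(x) = c\,\ell(f)$, with equality iff $f(x) \ne 0 \implies \varphi(x) = c$, i.e.\ iff $\supp(f) \subset X(\varphi)$. Combining the two computations gives equality, hence $\supp(f) \subset X(\varphi)$, as needed. (If $\V$ is complex one takes the real part of $\varphi$, or simply notes $\varphi(x)-\varphi(x') \in \R_+$ already forces the relevant values to be real; this is a routine point.)

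For the second assertion, let $\B \subset \R$ be a subring with quotient field $\F = \F(\B)$, and let $Y \subset X$. One direction is immediate: $\B \subset \F$, so $\Fin(X,\B_+) \subset \Fin(X,\F_+)$ and likewise for $Y$; hence if $Y$ is a weak $\F$-face it is automatically a weak $\B$-face. For the converse, suppose $Y$ is a weak $\B$-face and take $f \in \Fin(X, \F_+)$, $g \in \Fin(Y, \F_+)$ with $\ell(f) = \ell(g) > 0$ and $\disp(f) = \disp(g)$; I want $\supp(f) \subset Y$. The idea is to clear denominators: the finitely many nonzero values of $f$ and $g$ lie in $\F_+$, so there is a common denominator $d \in \B$, $d > 0$, with $d f(x) \in \B_+$ for all $x$ and $d g(y) \in \B_+$ for all $y$. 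Then $df \in \Fin(X,\B_+)$ and $dg \in \Fin(Y,\B_+)$, and scaling preserves both $\ell$ and $\disp$ linearly, so $\ell(df) = d\,\ell(f) = d\,\ell(g) = \ell(dg) > 0$ and $\disp(df) = d\,\disp(f) = d\,\disp(g) = \disp(dg)$. Applying the weak $\B$-face hypothesis to the pair $(df, dg)$ gives $\supp(df) \subset Y$; but $\supp(df) = \supp(f)$ since $d \ne 0$, so $\supp(f) \subset Y$, completing the proof.

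Neither step presents a genuine obstacle; the lemma is, as the paper says, straightforward. The only mild subtlety worth stating carefully is the characterization of the equality case in the first part (that $\sum_x f(x)\varphi(x) = c\,\ell(f)$ with all $f(x) \ge 0$ and all $\varphi(x) \le c$ forces $\varphi(x) = c$ on $\supp(f)$), and in the second part the observation that a finite set of nonnegative elements of a quotient field admits a positive common denominator in the ring — both are elementary. I would present the argument in this order (first part, then each direction of the second part) and keep it to a few lines.
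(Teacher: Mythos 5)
Your proof is correct, and it is the standard argument the paper has in mind when it declares Lemma \ref{L0} ``straightforward'' and omits the proof: evaluate $\varphi$ on $\disp(f)=\disp(g)$ and use nonnegativity of the coefficients to force equality on $\supp(f)$, then clear denominators for the subring/quotient-field equivalence. Both steps, including the equality-case analysis and the positive common denominator, are handled correctly, so there is nothing to add.
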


Now observe that the sets $S \subset \wt \lie{g}$ satisfying Equation
\eqref{ECG2} are precisely the weak $\Z$-faces of $\wt \lie{g} = \wt
L(\theta) = \Phi \cup \{ 0 \}$ (and hence the weak $\mathbb{Q}$-faces as
well, by Lemma \ref{L0}). In joint work \cite{CKR} with Chari and
Ridenour, the results in \cite{CG2} were extended to obtain families of
Koszul algebras using weak $\mathbb{Q}$-faces of arbitrary Weyl polytopes
$\calp(\lambda)$ (as opposed to $\calp(\theta)$).

Thus, it is fruitful to understand and classify subsets $S$ satisfying
\eqref{ECG2}. Using case by case arguments, Chari et al \cite{CDR} showed
that such sets of roots $S$ are precisely the set of weights on some face
of $\calp(\theta)$. Hence one has various seemingly distinct yet related
ingredients in root polytopes: the faces of the polytope, the maximizer
subsets $(\wt \lie{g})(\xi)$, and the weak $\mathbb{Q}$-faces of $\wt
\lie{g}$. Although we observed in Remark \ref{Rweak} that weak
$\mathbb{Q}$-faces of $\wt L(\theta)$ are related to faces of
$\calp(\theta)$, one would like more precise connections between these
objects. Thus we showed with Ridenour using type-free arguments that more
generally, all of these notions are one and the same, in every Weyl
polytope. Some of our results also recover those by Satake and others
(see Theorem \ref{Tvin} above).

\begin{theorem}[Khare and Ridenour \cite{KR}; Chari et al
\cite{CDR}]\label{Tkrcdrvi}
For any $\lambda \in P^+$ and any subfield $\F$ of $\R$, the weak
$\F$-faces $S$ of $\wt L(\lambda)$ are precisely the maximizer subsets $S
= (\wt L(\lambda))(\xi)$ for some $\xi \in P$. There is a bijection
between such subsets $S$ and faces $F$ of the Weyl polytope
$\calp(\lambda)$, sending $S$ to $F = \conv_\R(S)$, or equivalently,
sending a face $F$ to $S = F \cap \wt L(\lambda)$.
\end{theorem}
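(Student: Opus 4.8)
The plan is to prove the two-way containments among the three families of subsets, and then read off the bijection with faces. Write $\calp := \calp(\lambda) = \conv_\R \wt L(\lambda)$, which is a polytope since $\wt L(\lambda)$ is finite (Theorem \ref{Tklv}(1)), with vertex set $W(\lambda) \subset \wt L(\lambda)$; note also that every weight of $L(\lambda)$ lies in $\lambda - \Z\Delta \subset P$ and is therefore a rational point of $\liehr^*$. Since $\Q_+ \subseteq \F_+$, a weak $\F$-face is in particular a weak $\Q$-face, so I would work with weak $\Q$-faces throughout; conversely every $(\wt L(\lambda))(\xi)$ is a weak $R$-face of $\wt L(\lambda)$ for all $R \subseteq \R$ by Lemma \ref{L0}. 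Thus it remains to show that every (nonempty) weak $\Q$-face is a maximizer subset, and the first easy half of this is the claim $\wt L(\lambda) \cap \conv_\R(S) = S$ for any weak $\Q$-face $S$: given $x$ in the left side, rationality of $x$ and of the finitely many points of $S$ witnessing $x \in \conv_\R(S)$ lets me write $x = \disp(g)$ for some $g \in \Fin(S,\Q_+)$ with $\ell(g) = 1$, and testing the weak-face condition against the $f \in \Fin(\wt L(\lambda),\Q_+)$ with $f(x)=1$ and $\supp f = \{x\}$ forces $x \in S$.

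The main step is to show that $F := \conv_\R(S)$ is a \emph{face} of $\calp$ for every weak $\Q$-face $S$. Let $G$ be the smallest face of $\calp$ containing $F$; then the relative interior of $F$ lies inside the relative interior of $G$. Choose a point $x \in \mathrm{relint}(F) \subseteq \mathrm{relint}(G)$. On one hand $x$ is a strictly positive convex combination of \emph{all} the vertices of $G$, and these coefficients may be kept rational and positive after a small perturbation; since the vertices of $G$ are vertices of $\calp$ they lie in $\wt L(\lambda)$, so this writes $x = \disp(f)$ for some $f \in \Fin(\wt L(\lambda),\Q_+)$ with $\ell(f)=1$ whose support is the entire vertex set $V(G)$ of $G$. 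On the other hand $x \in F = \conv_\R(S)$ gives $x = \disp(g)$ for some $g \in \Fin(S,\Q_+)$ with $\ell(g)=1$. The weak-face condition then yields $V(G) = \supp f \subseteq S$, hence $G = \conv_\R V(G) \subseteq \conv_\R(S) = F \subseteq G$, so $F = G$ is a face.

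To conclude, I would recall that a face $F$ of the rational polytope $\calp$ is exposed by a functional which, after rescaling, lies in $P$, and that for such $\xi$ one has $F \cap \wt L(\lambda) = (\wt L(\lambda))(\xi)$ because $\calp = \conv_\R \wt L(\lambda)$ (the maximum of $\xi$ over $\calp$ and over $\wt L(\lambda)$ agree). Given a weak $\Q$-face $S$, the previous paragraph makes $F := \conv_\R(S)$ a face exposed by some $\xi \in P$, and combining $S \subseteq F \cap \wt L(\lambda) = (\wt L(\lambda))(\xi)$ with the first step $F \cap \wt L(\lambda) = \wt L(\lambda) \cap \conv_\R(S) = S$ shows $S = (\wt L(\lambda))(\xi)$; together with the reverse containments noted at the outset, all three families coincide for every subfield $\F$. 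Finally $S \mapsto \conv_\R(S)$ and $F \mapsto F \cap \wt L(\lambda)$ are mutually inverse bijections between weak $\F$-faces of $\wt L(\lambda)$ and faces of $\calp(\lambda)$: one direction is $\conv_\R(F \cap \wt L(\lambda)) = F$, which holds because the vertices of $F$ already lie in $\wt L(\lambda)$; the other is $\wt L(\lambda) \cap \conv_\R(S) = S$, the first step again. The genuine obstacle — the one point where something beyond routine convexity is needed — is the middle step: feeding a strictly interior point of $\conv_\R(S)$, expressed through all the vertices of the minimal face containing it, into the weak-face condition so as to collapse that face onto $\conv_\R(S)$. The surrounding passage between $\R$-convex combinations and the $\Q_+$-valued functions of the definition is routine but must be carried out carefully.
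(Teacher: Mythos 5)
Your proof is correct. Note that the paper itself contains no proof of this statement: it is quoted as a literature result from \cite{KR}, \cite{CDR}, and \cite{Vi}, and even the closest internal analogue, Proposition \ref{Pkr2}, defers the substantive direction (weak face $\Rightarrow$ face of the hull) to \cite[Theorem 4.3]{KR}. Your argument supplies exactly that missing content, and does so by the standard route that \cite{KR} itself uses: reduce from $\F$ to $\Q$, take the minimal face $G$ of $\calp(\lambda)$ containing $\conv_\R(S)$, feed a rational relative-interior point (e.g.\ $\rho_S/|S|$) written as a strictly positive rational combination of all vertices of $G$ into the weak-face condition to force $V(G)\subset S$, and then pass between faces and maximizers via an integral exposing functional for the lattice polytope. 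The only places where you are slightly informal --- choosing the relative-interior point to be rational so that the ``small perturbation'' of coefficients can be taken inside the rational affine solution space, and handling the empty set --- are routine and do not constitute gaps.
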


\subsection{Main results: 2. Faces of convex hull of weights}

Notice that Theorem \ref{Tkrcdrvi} holds only for finite-dimensional
highest weight modules. An immediate question is if these results extend
to all modules $\vla$. Another possible extension involves working not
with a subring $\Z$ or subfield $\F$ of $\R$, but with an additive
subgroup:

\begin{qn}
Find connections as in Theorem \ref{Tkrcdrvi}, for an arbitrary highest
weight module $M(\lambda) \twoheadrightarrow \vla$, for $\lambda \in
\lie{h}^*$.
Is it also possible to classify the weak $\bba$-faces of $\wt \vla$,
where $0 \neq \bba \subset (\R, +)$ is an arbitrary nontrivial additive
subgroup? Are these equal to the sets of weights on faces of the convex
hull of weights $\conv_\R (\wt \vla)$?
\end{qn}

\noindent Our next main result completely answers these questions when
$\lambda$ is not on a simple root hyperplane (for all $\vla$), as well as
for all simple modules and parabolic Verma modules:

\begin{utheorem}\label{T3}
Suppose $(\lambda, \vla)$ satisfy one of the following:
(a) $\lambda \in \lie{h}^*$ is simply-regular and $\vla$ is arbitrary;
(b) $|J_\lambda \setminus J(\vla)| \leq 1$;
(c) $\vla = M(\lambda, J')$ for some $J' \subset J_\lambda$; or
(d) $\vla$ is pure.

\noindent The following are equivalent for a nonempty subset $Y \subset
\wt \vla$:
\begin{enumerate}
\item $Y = (\wt \vla)(\varphi)$ for some $\varphi \in \lie{h}$ (i.e., $Y$
is the set of weights on some supporting hyperplane).

\item $Y \subset \wt \vla$ is a weak $\bba$-face.

\item There exist $w \in W_{J(\vla)}$ and $J \subset I$ such that $Y =
w(\wt_J \vla)$.

If $\lambda$ is simply-regular, then these are also equivalent
to:

\item $Y \cap \wt_{J(\vla)} \vla$ is nonempty, and if $y_1 + y_2 = \mu_1
+ \mu_2$ for $y_i \in Y, \mu_i \in \wt \vla$, then $\mu_i \in Y$ as well.
\end{enumerate}
\end{utheorem}

\noindent This theorem at once characterizes and classifies all subsets
of weights that are weak $\Z$-faces (as in \cite{CDR,CG2,CKR}) or weak
$\F$-faces (as in \cite{KR}) of $\wt \vla$, or lie in the ``usual'' faces
of $\conv_\R(\wt \vla)$ (in Euclidean space, as in
\cite{BT,Cas,CM,KR,Sa,Vi}). Moreover, all of these references involve
finite-dimensional simple
modules; but these constitute a special case of our result, where
$\lambda \in P^+,\ \vla = L(\lambda)$, $J(\vla) = I$, and $\bba = \Z$ or
$\F$. In contrast, Theorem \ref{T3} holds for all simple $L(\lambda)$ as
well as all highest weight modules for simply-regular $\lambda$, for all
subgroups $\bba \subset \R$ -- and it is independent of $\bba$.

\begin{remark}
The last condition (4) in Theorem \ref{T3} is \textit{a priori} far
weaker than being a weak $\Z$-face; it was also considered by Chari et al
in \cite{CDR} for $\wt \lie{g}$. It is easy to see by Lemma \ref{Lfield}
below that there are many ``intermediate'' conditions of closedness that
are implied by (2) and imply (4) in Theorem \ref{T3}; thus, they are all
equivalent to (2) as well.
\end{remark}

\begin{remark}
Vinberg showed in \cite{Vi} that every face of the Weyl polytope
$\calp(\lambda)$ is a $W$-translate of a dominant face
$\calp(\lambda)(\mu)$ for some dominant $\mu \in \R_+ \Omega$, where
$\Omega$ is the set of fundamental weights; see also \cite[Proposition
5.1 and Theorem 5.6]{CM} for the special case of the adjoint
representation. Using Theorem \ref{T3}, it is clear how to extend this to
all $\vla$ for simply-regular $\lambda$ and to all simple $\vla =
L(\lambda)$. It is not hard to show in this case that the map $Y \mapsto
\conv_\R Y$ is a bijection from the set of weak $\Z$-faces of $\wt \vla$
to the set of faces of $\conv_\R \wt \vla$, with inverse map $F \mapsto F
\cap \wt \vla$. See Proposition \ref{Pkr2}.
\end{remark}

We also provide an ``intrinsic'' characterization of the weak faces of
$\wt \vla$ that are \textit{finite} sets, thereby generalizing results
for finite-dimensional modules $L(\lambda)$ in \cite{CDR,KR}. See Theorem
\ref{T4}.

\subsection{Other connections}

The study of Weyl polytopes -- and more generally, highest weight
modules, their structure and combinatorics -- continues to be an area of
intense activity. Early results such as the character formulas of
Weyl(-Kac) and Kostant, as well as more modern results such as
Kazhdan-Lusztig theory and the theory of crystals, have yielded direct or
algorithmic information about the characters and weights of various
simple modules.
Modern interest centers around crystal bases and canonical bases
introduced by Kashiwara and Lusztig, which are a major development in
combinatorial representation theory (see \cite{HK} and its references),
and are a widely used tool in representation theory, combinatorics, and
mathematical physics.

We present further connections to the literature. In the special case
$\lambda = \theta$, the root polytope $\calp(\theta)$ has been the focus
of much recent interest because of its importance in the study of abelian
and ad-nilpotent ideals of $\lie{g}$ (or of $\lie{b}^+$). These
connections are described in Section \ref{S4}.
Root polytopes $\calp(\theta)$ and their variants such as
$\conv_\R(\Phi^+ \cup \{ 0 \})$ have been much studied for a variety of
reasons: they are related to certain toric varieties, as discussed in
\cite[\S 1]{Chi}. Moreover, their connections to combinatorics (e.g.
computing the volumes of these polytopes, word lengths with respect to
root systems, and growth series of root lattices via triangulations) were
explored by Ardila et al, M\'esz\'aros, and even earlier by
Gelfand--Graev--Postnikov. See \cite{ABHPS,Me} and the references
therein.

Weyl polytopes also have other connections to combinatorics and
representation theory. For instance, a class of ``pseudo-Weyl polytopes''
(i.e., polytopes whose edges are parallel to roots) called
Mirkovi\'c-Vilonen (or MV) polytopes has recently been the focus of much
research. These are the image under the moment map of certain projective
varieties in the affine Grassmannian, called MV-cycles, which provide
bases of finite-dimensional simple modules over the Langlands dual group
via intersection homology. MV-cycles and polytopes (which include Weyl
polytopes) are useful in understanding weight multiplicities and tensor
product multiplicities, and also have connections to Lusztig's canonical
basis. See for instance \cite{And,Kam} for more details.

\subsection{Applications of the main results}

We conclude this section by mentioning two applications of the above
results and the methods used to prove them.
Note by Theorem \ref{T2} that $\conv_\R \wt L(\lambda) = \conv_\R \wt
M(\lambda, J_\lambda)$ for all weights $\lambda \in \lie{h}^*$. We now
return to our original motivation for analyzing the convex hull of
weights of $\lie{g}$-modules $\vla$:
\begin{itemize}
\item To compute the set of weights of all simple modules $L(\lambda)$.

\item To determine whether \cite[Theorem 7.41]{Ha} holds more generally
for all $\lambda \in \lie{h}^*$, namely:
\[
\wt L(\lambda) = (\lambda - \Z \Delta) \cap \conv_\R \wt L(\lambda).
\]
\end{itemize}

Some progress towards these and related questions is known. For instance,
as per \cite[Chapter 7]{H3}, translation functors can be used to reduce
computing the formal character of simple modules $L(\lambda)$ for
semisimple $\lie{g}$ to the principal blocks $\mathcal{O}_0$ for all
$\lie{g}$. A more involved approach uses Kazhdan-Lusztig polynomials; see
\cite{BB,BK,EW,KL} as well as \cite[Chapter 8]{H3} for more on the
subject.

We now completely resolve both questions above, by providing explicit
formulas to compute the supports of all simple modules $L(\lambda)$ and
parabolic Verma modules $M(\lambda,J')$, among others:

\begin{utheorem}\label{Twtgvm}
Suppose $\lambda \in \lie{h}^*, J' \subset J_\lambda$, and $\vla =
M(\lambda,J')$ or $M(\lambda) \twoheadrightarrow \vla$ with $|J_\lambda
\setminus J(\vla)| \leq 1$. Then,
\begin{equation}\label{Ewtgvm}
\wt \vla = (\lambda - \Z \Delta) \cap \conv_\R \wt \vla = \wt
L_{J(\vla)}(\lambda) - \Z_+ (\Phi^+ \setminus \Phi^+_{J(\vla)}) =
\bigsqcup_{\mu \in \Z_+ \Delta_{I \setminus J(\vla)}} \wt
L_{J(\vla)}(\lambda - \mu).
\end{equation}
\end{utheorem}

\noindent Theorem \ref{Twtgvm} provides three formulas for the support of
all simple or parabolic Verma modules. One of these formulas demonstrates
the invariance of $\wt L(\lambda)$ under the subgroup $W_{J_\lambda}$ of
$W$, and is a ``Verma-type'' union of finite ``integrable'' sets of
weights. (This corresponds to the integrability of $L(\lambda)$ under the
Levi subalgebra $\lie{h} + \lie{g}_{J_\lambda}$.)
Another expresses $\wt L(\lambda)$ as a ``Verma-type'' finite Minkowski
sum of rays.

To our knowledge (and that of experts), these formulas are not known in
the literature. The formulas are direct and do not involve cancellations.
(We remark that O.~Mathieu, in private communication, mentioned to us the
equation $\wt L(\lambda) = \wt M(\lambda, J_\lambda)$; note this also
follows from the formulas in Theorem \ref{Twtgvm}.) 
Also note that the last expression in Equation \eqref{Ewtgvm} corresponds
to the $\lie{g}_{J'}$-integrability of $M(\lambda,J')$, as discussed in
\cite[Chapter 9]{H3}, while the first equality extends \cite[Theorem
7.41]{Ha} to all parabolic Verma modules. We also show in Theorem
\ref{Twtsimple} that Equation \eqref{Ewtgvm} is false for general $\vla$.
However, by Theorem \ref{T1}, it holds for all simple modules:

\begin{cor}\label{Csimple}
Equation \eqref{Ewtgvm} holds upon specializing $(\vla, J(\vla))$ to
$(L(\lambda), J_\lambda)$ for all $\lambda \in \lie{h}^*$.
\end{cor}

\begin{remark}\label{Rmult}
A more ambitious goal is to compute the weight multiplicities for highest
weight modules. Note that finite-dimensional modules $\vla = L(\lambda)$
for $\lambda \in P^+$, as well as the corresponding Weyl polytopes
$\conv_\R \wt L(\lambda)$, are closely associated with
Duistermaat--Heckman functions and thus in computing the weight
multiplicities \cite{BGR,DH}.
In fact Antoine and Speiser \cite{AS} provided explicit formulas for the
characters of simple finite-dimensional $\lie{g}$-modules $L(\lambda)$ in
terms of the sets of weights $\wt L(\mu)$ for $\mu \in P^+$, for certain
low-rank simple Lie algebras $\lie{g}$. See also \cite{Bl,CT}, as well as
\cite[Theorem 3.7]{Kas}, in which Kass related characters and weight-sets
for $\{ L(\mu) : \mu \in P^+ \}$ and provided a recursive formula to
compute the character of $L(\lambda)$. (Translation functors are also
used in character computations; see \cite[Chapter 7]{H3}.) The present
paper has limited results regarding weight multiplicities; see Theorem
\ref{Twcf}, where we extend the Weyl Character Formula to simple modules
$L(\lambda)$ for highest weights $\lambda$ which are not necessarily
dominant integral. Also note that the $J(\vla)$-integrability (or the
convex hull) of $\vla$ can be used to compute the multiplicity of
specific weights $\mu$ of $\vla$ (or to show it is nonzero), using the
multiplicity of other weights in the $W_{J(\vla)}$-orbit of $\mu$ which
may be easier to compute. We will explore in future work, how the methods
and results of the present paper can be used to obtain further
information involving weight multiplicities.
\end{remark}

Our second application is ``dual'' to Theorem \ref{T1} in the following
sense: Theorem \ref{T1} identifies a ``largest parabolic subgroup of
symmetries'' given a highest weight module. Dually, it is possible to
identify a largest and a smallest highest weight module, given a
parabolic group of symmetries.

\begin{utheorem}\label{Tminmax}
Fix $\lambda \in \lie{h}^*$ and $J' \subset  J_\lambda$ such that either
$\lambda$ is simply-regular, or $J' = \emptyset$ or $J_\lambda$. There
exist unique ``largest'' and ``smallest'' highest weight modules
$M_{\max}(\lambda, J'), M_{\min}(\lambda,J')$ such that the following are
equivalent for a nonzero highest weight module $M(\lambda)
\twoheadrightarrow \vla$:
\begin{enumerate}
\item $\conv_\R \wt \vla = \conv_\R \wt M(\lambda,J')$.
\item The stabilizer subgroup in $W$ of $\conv_\R \wt \vla$ is $W_{J'}$.
\item The largest parabolic subgroup of $W$ that preserves $\conv_\R \wt
\vla$ is $W_{J'}$.
\item $M(\lambda) \twoheadrightarrow M_{\max}(\lambda,J')
\twoheadrightarrow \vla \twoheadrightarrow M_{\min}(\lambda,J')$.
\end{enumerate}
\end{utheorem}

The results and techniques in this paper yield further rewards. For
instance, in related work \cite{Khminmax}, we extend very recent
combinatorial results by Cellini and Marietti \cite{CM} on the adjoint
representation of a simple Lie algebra $\lie{g}$, to arbitrary highest
weight modules $\vla$ over (finite-dimensional) semisimple $\lie{g}$. In
Corollary \ref{Cvinberg}, we also completely classify all inclusion
relations between faces of all highest weight modules $\vla$ with generic
highest weight (in fact, any simply-regular weight) $\lambda \in
\lie{h}^*$. This extends Theorem \ref{Tvin} and related results by
Satake, Borel--Tits, Vinberg, and Casselman to large classes of
infinite-dimensional modules $\vla$. A full classification of inclusion
relations between faces for all weights $\lambda$ is more involved, and
is obtained in follow-up work \cite{Khminmax}.

\begin{remark}
Since this paper went to press, we have been able to strengthen some of
the above results. Namely, we show in a forthcoming paper with G.~Dhillon
\cite{DK} that Theorems \ref{T2}, \ref{T3}, \ref{Tminmax} hold for
\textit{all} highest weight modules, as well as more generally, over
arbitrary Kac-Moody algebras $\lie{g}$.
\end{remark}

\section{Classifying (positive) weak faces for simply-regular highest
weights}\label{S4}


The remainder of this paper is devoted to proving the results stated in
Section \ref{S2}.
In the present section, we study (weak) faces of $\conv_\R \wt \vla$ for
all modules $\vla$ with simply-regular $\lambda$, i.e., satisfying
$\lambda(h_i) \neq 0\ \forall i \in I$. This provides alternate proofs of
the main results of \cite{KR,Vi}, which were known to date only for
finite-dimensional $\vla$.
The proofs in this section are algebraic/combinatorial. Thus they differ
from previous papers and future sections in that they are case-free as
opposed to the case-by-case analysis in \cite{CDR}, and use neither the
Decomposition Theorem for convex polyhedra as in \cite{KR}, nor the
geometry of the Weyl group action as in \cite{Vi}.

In this section we consider several combinatorial conditions among
subsets of $\wt \lie{g}$ (some not yet mentioned in this paper), which
were studied by Chari and her co-authors in \cite{CDR,CG2}, as well as in
joint works \cite{CKR,KR} by the author. To state these conditions for
general $\vla$, some additional notation is needed.

\begin{defn}
Let $X$ be a subset of a real vector space, and $R \subset \R$ be any
(nonempty) subset.
\begin{enumerate}
\item $Y \subset X$ is a \textit{positive weak $R$-face} if for any $f
\in \Fin(X, R_+)$ and $g \in \Fin(Y,R_+)$,
\begin{equation}
\disp(f) = \disp(g) \implies \ell(g) \leq \ell(f),
\end{equation}

\noindent with equality if and only if $\supp(f) \subset Y$. Note that
this definition is consistent with the notation and results in \cite{KR},
via \cite[Proposition 4.4]{KR}.

\item Given $R,R' \subset \R$, we say that $Y \subset X$ is
\textit{$(R',R)$-closed} if given $f \in \Fin(X, R),\ g \in \Fin(Y,R)$,
\begin{equation}
\ell(f) = \ell(g) \in R' \setminus \{ 0 \},\ \disp(f) = \disp(g) \implies
\supp(f) \subset Y.
\end{equation}

\item Define the \textit{$R$-convex hull} of $X$ to be the image under
$\disp$ of $\{ f \in \Fin(X, R \cap [0,1]) : \ell(f) = 1 \}$. This will
be denoted by $\conv_R(X)$.
\end{enumerate}
\end{defn}

\noindent (Positive) weak $\Z$-faces were studied and used in
\cite{CDR,CG2,KR}. Weak $R$-faces are the same as $(\R, R_+)$-closed
subsets. Moreover, the ``usual'' convex hull of a set $X$ is simply
$\conv_\R(X)$.

The goal of this section is to prove Theorem \ref{T3} when $\lambda$ is
simply-regular. More precisely, we classify the (positive) weak faces of
$\wt \vla$ that contain the vertex $\lambda$. Later, Theorem \ref{T2}
will help prove Theorem \ref{T3} without the restriction of containing
$\lambda$. Here is the main result in this section.

\begin{theorem}\label{Tface}
Given $\lambda \in \lie{h}^*$ and $M(\lambda) \twoheadrightarrow \vla$
with highest weight space $\vla_\lambda = \C v_\lambda$,
\[
\wt_J \vla = (\wt \vla)(\rho_{I \setminus J}) = \wt U(\lie{g}_J)
v_\lambda\ \forall J \subset I.
\]

\noindent Now fix an additive subgroup $0 \neq \bba \subset (\R,+)$, and
a subset $Y \subset \wt \vla$ that contains $\lambda$. Then each part
implies the next:
\begin{enumerate}
\item There exists a subset $J \subset I$ such that $Y = \wt_J \vla$.

\item $Y = (\wt \vla)(\varphi)$ for some $\varphi \in \lie{h}$.

\item $Y$ is a weak $\bba$-face of $\wt \vla$.

\item $Y$ is $(\{ 2 \}, \{ 1, 2 \})$-closed in $\wt \vla$.
\end{enumerate}

\noindent If $\lambda - \alpha_i \in \wt \vla$ for all $i \in I$, then
these are all equivalent, and the $J$ in part (1) is unique.
\end{theorem}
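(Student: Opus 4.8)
The plan is to establish the chain $(1)\Rightarrow(2)\Rightarrow(3)\Rightarrow(4)$ first, and then, under the additional hypothesis $\lambda - \alpha_i \in \wt\vla$ for all $i \in I$, close the loop by proving $(4)\Rightarrow(1)$. The three identities $\wt_J\vla = (\wt\vla)(\rho_{I\setminus J}) = \wt U(\lie{g}_J)v_\lambda$ are the starting point and should be dispatched up front. For $\wt_J\vla = \wt U(\lie{g}_J)v_\lambda$: since $\vla$ is $\lie{g}_J$-integrable (at least after passing to the relevant Levi data; in general $U(\lie{g}_J)v_\lambda$ is a highest weight $\lie{g}_J$-module), its weights lie in $\lambda - \Z_+\Delta_J$, giving ``$\supseteq$'' the wrong way — so instead argue that $\wt U(\lie{g}_J)v_\lambda$ is exactly $\wt_{J(\vla)\cap J}$ of a Verma-type object and then use that $\wt_J\vla$ consists of weights $\mu \le \lambda$ with $\lambda - \mu \in \Z_+\Delta_J$, each of which is reached inside $U(\lie{n}^-_J)v_\lambda$ by a PBW/weight-string argument using the $\lie{sl}_2$-triples for $j\in J$. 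The identity $\wt_J\vla = (\wt\vla)(\rho_{I\setminus J})$ follows because $\rho_{I\setminus J} = \sum_{i\notin J}\omega_i$ pairs to $0$ with every $\alpha_j$, $j\in J$, and strictly positively with every $\alpha_i$, $i\notin J$; hence $\rho_{I\setminus J}(\lambda - \sum n_i\alpha_i) = \rho_{I\setminus J}(\lambda) - \sum_{i\notin J} n_i$ is maximized exactly when $n_i = 0$ for all $i\notin J$.

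With those in hand, $(1)\Rightarrow(2)$ is immediate: take $\varphi = \rho_{I\setminus J} \in \liehr \subset \lie{h}$ and invoke the just-proved identity $\wt_J\vla = (\wt\vla)(\varphi)$. The implication $(2)\Rightarrow(3)$ is exactly the first assertion of Lemma \ref{L0}: every nonempty $X(\varphi)$ is a weak $R$-face of $X$, for any $R\subset\R$, in particular for $R = \bba$. The implication $(3)\Rightarrow(4)$ is essentially formal from the definitions: if $Y$ is a weak $\bba$-face and $f\in\Fin(\wt\vla,\{1,2\})$, $g\in\Fin(Y,\{1,2\})$ with $\ell(f)=\ell(g)=2$ and $\disp(f)=\disp(g)$, then $\{1,2\}\subset\bba_+$ (since $\bba$ is a nontrivial additive subgroup of $\R$, it is dense or cyclic, but in either case — wait, a nontrivial additive subgroup need not contain $1$); so here one should instead observe that being $(\{2\},\{1,2\})$-closed is implied by being a weak $\bba$-face \emph{for $\bba = \Z$}, and then use the second half of Lemma \ref{L0} together with a rescaling argument — or, cleanest, note directly from Definition \ref{Dweak}(3) that the weak $\bba$-face condition is stated for \emph{all} $f\in\Fin(X,\bba_+)$, and since we may clear denominators, $(3)$ for any nonzero $\bba$ forces the $(\{2\},\{1,2\})$-closedness by choosing coefficients in $\bba_+\cap\Z_{>0}$ appropriately scaled; I would phrase this as: $(\{2\},\{1,2\})$-closed is weaker than weak $\Z$-face, which by Lemma \ref{L0} is equivalent to weak $\Q$-face, which is implied by weak $\bba$-face for any nonzero $\bba$ after clearing denominators.

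The substantive step, and the main obstacle, is $(4)\Rightarrow(1)$ under the hypothesis that every $\lambda - \alpha_i$ is a weight. Suppose $Y\subset\wt\vla$ contains $\lambda$ and is $(\{2\},\{1,2\})$-closed. First show that if $\lambda - \alpha_i \in Y$ and $\lambda - \alpha_j \in Y$ then every weight of $U(\lie{g}_{\{i,j\}})v_\lambda$ lies in $Y$, and more generally define $J := \{ i \in I : \lambda - \alpha_i \in Y\}$ and prove $Y = \wt_J\vla$. The inclusion $\wt_J\vla \subset Y$ is the crux: for a weight $\mu = \lambda - \sum_{j\in J}n_j\alpha_j \in \wt_J\vla$, induct on $\hgt(\lambda-\mu)$, using the closedness relation $f = \{\lambda \mapsto 1,\ \mu \mapsto 1\}$ (so $\ell(f)=2$, $\disp(f) = \lambda+\mu$) against a witness $g$ supported on $Y$ built from the already-known weights $\lambda$ and $\mu'$ where $\mu' = \mu + \alpha_j$ for a suitable $j\in J$ with $n_j>0$ — one needs $\lambda + \mu = \lambda' + \mu''$ with $\lambda',\mu''\in Y$ and $\ell = 2$, forcing $\mu\in Y$; here the pair $(\lambda - \alpha_j,\ \mu + \alpha_j)$ works provided both lie in $Y$, the former by definition of $J$ and the latter by the induction hypothesis, and provided $\mu+\alpha_j \in \wt\vla$, which holds by convexity/weight-string arguments since $\mu,\lambda\in\wt\vla$. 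The reverse inclusion $Y\subset\wt_J\vla$ requires ruling out that $Y$ contains any weight $\mu$ with $\mu \not\le_J \lambda$; here one uses the $(\{2\},\{1,2\})$-closedness in the other direction plus the hypothesis $\lambda - \alpha_i\in\wt\vla$ for all $i$ to derive a contradiction from the existence of such a $\mu$, by producing an $f$ with $\ell(f)=2$, support not in $Y$, but $\disp(f)$ matching some $g$ on $Y$. Uniqueness of $J$ is clear since $J$ is recovered as $\{i : \lambda - \alpha_i\in Y\}$. I expect the bookkeeping in the reverse inclusion — controlling which weights outside $\lambda - \Z_+\Delta_J$ can appear — to be the delicate part, and it is precisely where the hypothesis ``$\lambda - \alpha_i\in\wt\vla\ \forall i$'' is indispensable, as it guarantees enough ``nearby'' weights to build the required test functions $f,g$.
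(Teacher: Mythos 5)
Your overall architecture matches the paper's: establish the preliminary identities, run $(1)\Rightarrow(2)\Rightarrow(3)\Rightarrow(4)$, then close the loop with $(4)\Rightarrow(1)$ under the extra hypothesis, defining $J := \{ i : \lambda - \alpha_i \in Y\}$. But there are several concrete gaps.

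First, for the identity $\wt_J \vla = \wt U(\lie{g}_J)v_\lambda$, you have the inclusions backwards: the containment $\wt U(\lie{g}_J)v_\lambda \subset \wt_J \vla$ is the \emph{easy} direction (weights of the highest weight $\lie{g}_J$-module $U(\lie{g}_J)v_\lambda$ lie in $\lambda - \Z_+\Delta_J$), and it is the reverse inclusion $\wt_J \vla \subset \wt U(\lie{g}_J) v_\lambda$ that needs work. That work is precisely Lemma \ref{Lweights} and its PBW-monomial argument, which you never invoke but which is the crucial tool both here and in $(4)\Rightarrow(1)$.

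Second, in $(3)\Rightarrow(4)$ you correctly note $\{1,2\}\not\subset\bba_+$ in general, but your fix goes the wrong way: ``weak $\bba$-face implies weak $\Q$-face after clearing denominators'' is false in general (take $\bba = \Z\sqrt{2}$, which contains no nonzero rational). The correct elementary move is to rescale the test functions, not $\bba$: given $f\in\Fin(\wt\vla,\{1,2\})$ and $g\in\Fin(Y,\{1,2\})$, pick any $0<a\in\bba$ and replace them by $af, ag$, which take values in $\{a,2a\}\subset\bba_+$ and have the same supports and proportionally scaled $\ell, \disp$; the weak $\bba$-face condition then applies directly. This is the one-line argument the paper means by ``dividing by any $0<a\in\bba$'' via Lemma \ref{Lfield}.

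Third, and most substantively, $(4)\Rightarrow(1)$ is incomplete. In the forward inclusion $\wt_J\vla\subset Y$, you assert that for ``a suitable $j\in J$ with $n_j>0$'' one has $\mu+\alpha_j\in\wt\vla$ ``by convexity/weight-string arguments.'' This is wrong as a justification: in a general highest weight module it is \emph{not} true that $\mu+\alpha_j\in\wt\vla$ for every $j$ with $n_j>0$, and convexity does not give it for any particular $j$. What is true --- by Lemma \ref{Lweights}, a PBW fact and not a convexity fact --- is that there exists a chain $\lambda=\mu_0>\mu_1>\cdots>\mu_N=\mu$ inside $\wt_J\vla$ with each $\mu_{k-1}-\mu_k\in\Delta_J$, and taking $j$ to be the last step $l_N$ gives $\mu+\alpha_j=\mu_{N-1}\in Y$ by induction. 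For the reverse inclusion $Y\subset\wt_J\vla$ you give no argument at all. The paper's argument is direct (not by contradiction): for $\mu\in Y$, Lemma \ref{Lweights} produces a chain from $\lambda$ to $\mu$ with steps $\alpha_{l_k}\in\Delta$ not yet known to lie in $\Delta_J$; applying $(\{2\},\{1,2\})$-closedness to $\mu_{N-1}+(\lambda-\alpha_{l_N})=\lambda+\mu$ forces $\mu_{N-1},\ \lambda-\alpha_{l_N}\in Y$, hence $l_N\in J$, and one iterates downward. Here the blanket hypothesis $\lambda-\alpha_i\in\wt\vla$ for all $i$ is what guarantees the test function supported on $\{\mu_{N-1},\ \lambda-\alpha_{l_N}\}$ actually lies in $\Fin(\wt\vla,\{1,2\})$ --- you correctly sense the hypothesis is needed here, but you have not supplied the argument.
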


\noindent Thus we are able to classify the weak $\bba$-faces of $\wt
\vla$ that contain $\lambda$, if $\lambda - \Delta \subset \wt \vla$. By
Corollary \ref{Cprev} below, this holds for all $\vla$ if $\lambda$ is
simply-regular. Moreover, the result shows that the weak $\bba$-faces of
$\wt \vla$ containing $\lambda$ can be described independently of $\bba$.

For completeness, we also classify which of these weak $\bba$-faces (from
Theorem \ref{Tface}) are positive weak $\bba$-faces. (By Proposition
\ref{Pkr} below, every positive weak $\bba$-face is necessarily a weak
$\bba$-face.)

\begin{theorem}\label{Trigid}
Fix $\lambda \in \lie{h}^*,\ J \subset I$, and an additive subgroup $0
\neq \bba \subset (\R,+)$. Then $\wt_J \vla$ is a positive weak
$\bba$-face of $\wt \vla$ if exactly one of the following occurs:
\begin{itemize}
\item $\lambda \notin \bba \Delta$ and $J \subset I$ is arbitrary, or

\item $\lambda \in \bba \Delta$, and there exists $j_0 \notin J$ such
that $(\lambda, \omega_{j_0}) > 0$.
\end{itemize}

\noindent The converse holds if $\lambda - \alpha_i \in \wt \vla\ \forall
i \in I$ and $a \bba \subset \bba$ for some $0 \neq a \in \bba$ (e.g.,
$\Z \cap \bba \neq 0$).
\end{theorem}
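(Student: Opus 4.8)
The statement has two halves: sufficiency of each of the two bulleted conditions, and (under the extra hypotheses) necessity of their disjunction, and I would organize the proof around these. Throughout I would lean on Theorem~\ref{Tface}, which already gives $\wt_J\vla=(\wt\vla)(\rho_{I\setminus J})$ as a weak $\bba$-face of $\wt\vla$ in every case; so the only thing left to check for sufficiency is the ``positive'' clause: for all $f\in\Fin(\wt\vla,\bba_+)$ and $g\in\Fin(\wt_J\vla,\bba_+)$ with $\disp(f)=\disp(g)$, one has $\ell(g)\le\ell(f)$, with equality iff $\supp(f)\subset\wt_J\vla$.

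For the second bullet I would first record a parametrized version of the computation behind Theorem~\ref{Tface}: for any strictly positive reals $t_i>0$ $(i\notin J)$, the functional $\varphi_{\mathbf t}:=(\sum_{i\notin J}t_i\omega_i,\,\cdot\,)$ is constant equal to $\varphi_{\mathbf t}(\lambda)$ on $\wt_J\vla$ and strictly smaller on $\wt\vla\setminus\wt_J\vla$, so that $(\wt\vla)(\varphi_{\mathbf t})=\wt_J\vla$ (the point being $(\omega_k,\alpha_i)=\delta_{ki}(\alpha_i,\alpha_i)/2$). When $(\lambda,\omega_{j_0})>0$ for some $j_0\notin J$ one can choose $\mathbf t$ so that in addition $\varphi_{\mathbf t}(\lambda)>0$; applying $\varphi_{\mathbf t}$ to the relation $\disp(f)=\disp(g)$ then gives $\varphi_{\mathbf t}(\lambda)\ell(g)=\sum_\nu f(\nu)\varphi_{\mathbf t}(\nu)\le\varphi_{\mathbf t}(\lambda)\ell(f)$, with equality precisely when $\supp(f)$ lies in the maximizer set $\wt_J\vla$. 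Dividing by $\varphi_{\mathbf t}(\lambda)>0$ finishes this bullet, and incidentally shows it does not depend on $\bba$.

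For the first bullet I would pass to coordinates in the basis $\Delta$: writing $\nu=\lambda-\sum_i c_i(\nu)\alpha_i$ with $c_i(\nu)\in\Z_+$ for $\nu\in\wt\vla$, we get $\disp(f)=\ell(f)\lambda-\sum_i d_i\alpha_i$ with $d_i=\sum_\nu f(\nu)c_i(\nu)\in\bba_+$, and $\supp(f)\subset\wt_J\vla$ iff $d_i=0$ for all $i\notin J$; similarly $\disp(g)=\ell(g)\lambda-\sum_{i\in J}e_i\alpha_i$ with $e_i\in\bba_+$. Subtracting, $(\ell(f)-\ell(g))\lambda\in\bba\Delta$ while $\ell(f)-\ell(g)\in\bba$ and each $d_i\ge0$; the goal is to show that $\lambda\notin\bba\Delta$ forces $\ell(f)=\ell(g)$, after which comparing $\alpha_i$-coefficients for $i\notin J$ gives $d_i=0$ there, i.e. $\supp(f)\subset\wt_J\vla$ — so here $\disp(f)=\disp(g)$ already yields $\ell(g)=\ell(f)$ and $\supp(f)\subset\wt_J\vla$, more than enough. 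For the converse I would assume the two hypotheses and that \emph{neither} bullet holds, so $\lambda\in\bba\Delta$, say $\lambda=\sum_i a_i\alpha_i$ with $a_i\in\bba$, and $(\lambda,\omega_j)\le0$ for all $j\notin J$. Using the weights $\lambda$ and $\lambda-\alpha_j$ $(j\notin J$, so $\lambda-\alpha_j\in\wt\vla\setminus\wt_J\vla)$, I would build $f$ as a $\bba_+$-combination of the point masses $\mathbf{1}_{\lambda-\alpha_j}$ $(j\notin J)$ together with a correcting $\bba_+$-multiple of $\mathbf{1}_\lambda$, and $g$ a $\bba_+$-multiple of $\mathbf{1}_\lambda$, scaled — clearing denominators via $a\bba\subset\bba$ — so that $\disp(f)=\disp(g)$ while $\supp(f)\not\subset\wt_J\vla$; the sign hypotheses $(\lambda,\omega_j)\le0$ then force $\ell(g)\ge\ell(f)$, violating the positive weak face property.

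The main obstacle, on both sides, is bookkeeping of how the additive subgroup $\bba$ interacts with the real multipliers $\ell(f),\ell(g)$ and the coefficients $d_i,e_i$, rather than with mere $\Q$- or $\R$-linearity. Concretely, the delicate steps are justifying ``$\lambda\notin\bba\Delta\Rightarrow\ell(f)=\ell(g)$'' in the first bullet and, dually, arranging in the converse that the denominators produced by matching $\disp(f)$ and $\disp(g)$ can be absorbed into $\bba_+$ using only $a\bba\subset\bba$ and $\lambda-\alpha_i\in\wt\vla$; this is exactly the point at which the two structural hypotheses of the converse are needed.
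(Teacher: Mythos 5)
Your proposal diverges from the paper in a significant structural way: the paper's sufficiency argument runs entirely through Proposition \ref{Pkr}, reducing ``positive weak $\bba$-face of $\wt\vla$'' to ``$0\notin\wt_J\vla$ and weak $\bba$-face of $\{0\}\cup\wt\vla$,'' so that $\ell(f)=\ell(g)$ (with $f$ now allowed a mass $c$ at $0$) becomes a \emph{hypothesis}. You instead attack the definition of positive weak face directly, and in the first bullet this leads to a genuine gap: you assert that $\disp(f)=\disp(g)$ together with $\lambda\notin\bba\Delta$ forces $\ell(f)=\ell(g)$, but this implication is false. In $\lie{sl}_2$ with $\lambda=\tfrac12\alpha$, $\vla=M(\lambda)$, $\bba=\Z$, $J=\emptyset$ (so $\lambda\notin\Z\Delta$), the functions $g\equiv 0$ and $f(\lambda)=f(\lambda-\alpha)=1$ satisfy $\disp(f)=\disp(g)=0$ while $\ell(f)=2\neq 0=\ell(g)$. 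The definition only requires the inequality $\ell(g)\leq\ell(f)$ (with equality characterizing $\supp(f)\subset Y$), and your coordinate identity $(\ell(f)-\ell(g))\lambda_i=d_i-e_i$ only produces that inequality if one has sign information on the $\lambda_i$, which is unrelated to $\lambda\notin\bba\Delta$. Proposition \ref{Pkr} is exactly the device that converts the deficit $\ell(g)-\ell(f\vert_X)$ into a nonnegative coefficient $c$ at $0$ and shifts the burden to proving $c=0$; without it, your setup cannot be repaired merely by filling in the ``delicate step'' you flag, because the step as formulated is not true.

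By contrast, your argument for the second bullet is correct and arguably cleaner than the paper's: choosing a dominant functional $\varphi_{\mathbf t}$ whose maximizer in $\wt\vla$ is exactly $\wt_J\vla$ and for which $\varphi_{\mathbf t}(\lambda)>0$ is precisely the hypothesis of Lemma \ref{Lfield}(5), and the positivity then drops out immediately and $\bba$-independently; the paper instead reaches the same point via Proposition \ref{Pkr} and pairing against $\omega_{j_0}$. Your converse sketch is in the same spirit as the paper's, which writes out the explicit identity $\bigl(r+r\sum_{j\in I_-}d_j\bigr)\lambda+\sum_{i\in I_+}rc_i(\lambda-\alpha_i)=\sum_{i\in I_+}rc_i\lambda+\sum_{j\in I_-}rd_j(\lambda-\alpha_j)$ with $r=\lvert a\rvert$ and compares the two coefficient sums; you would still need to make $f,g$ explicit and verify membership in $\Fin(\cdot,\bba_+)$ using $a\bba\subset\bba$, but the plan is sound.
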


\noindent Thus while the weak $\bba$-faces of $\wt \vla$ are independent
of $\bba$, the same cannot be said of the positive weak $\bba$-faces.

\subsection{Basic properties of closedness}

We now develop some preliminaries in order to prove Theorems \ref{Tface}
and \ref{Trigid}.
In \cite{CDR,CG2}, Chari et al discuss various combinatorial conditions,
and study the sets of roots in $\wt \lie{g} = \Phi \cup \{ 0 \}$ that
satisfy these conditions. These include the condition of being a weak
$\Z$-face as well as of being a positive weak $\Z$-face (which were
subsequently studied in all Weyl polytopes in \cite{CKR}). Another result
from \cite{CDR} is as follows:\medskip

\noindent \textit{``A proper subset $Y \subset \Phi^+$ is a weak
$\Z$-face if and only if: $\alpha + \beta, \alpha + \beta - \gamma \notin
\Phi\ \forall \alpha,\beta \in Y, \gamma \in \Phi \setminus
Y$.''}\medskip

\noindent In other words, $(Y + Y) \cap \Phi = (Y + Y) \cap [\Phi + (\Phi
\setminus Y)] = \emptyset$. It is natural to ask how to extend this
condition to arbitrary modules $\vla$. To do so, note that $0 \in \wt
L(\theta) \setminus Y$, so that the above condition is equivalent to the
following:
\[
(Y + Y) \cap (\wt \lie{g} + \{ 0 \}) = (Y + Y) \cap [\Phi + (\Phi
\setminus Y)] = \emptyset.
\]

\noindent In other words, $0 \notin Y \subset \wt \lie{g}$ is $(\{ 2 \},
\{ 1, 2 \})$-closed. In Theorems \ref{T3} and \ref{Tface}, we study this
condition in a general highest weight module.

\begin{remark}\label{Rclosed}
The notion of $(R',R_+)$-closedness thus occurs in the literature for
various $R',R \subset \R$:
\begin{itemize}
\item $R = \F$ and $R' \supset \F_+$ for a subfield $\F \subset \R$ (as
in weak $\F$-faces in \cite{KR}).

\item $R = \Z$ and $R' \supset \Z_+$; this is used in
\cite{CDR,CG2,CKR,KR}.

\item We address all of these (above) cases by working in greater
generality in this paper, with $R = \bba$ and $R' \supset \bba_+$ for an
additive subgroup $0 \neq \bba \subset (\R,+)$ (as in weak $\bba$-faces).

\item $R = R' = \R$ occurs in convexity theory and linear programming,
when one works with faces of polytopes and polyhedra, which are precisely
intersections with supporting hyperplanes.

\item $R' = \{ 2 \}$ and $R = \{ 1, 2 \}$ or $\{ 0, 1, 2 \}$ (as in in
\cite{CDR}).
\end{itemize}
\end{remark}

\begin{remark}\label{Rabelian}
Another combinatorial condition involves subsets $\Psi \subset \Phi^+$
that satisfy:
\begin{equation}\label{Esuter}
(\Psi + \Psi) \cap \Phi = \emptyset, \qquad (\Psi + \Phi^+) \cap \Phi
\subset \Psi.
\end{equation}

\noindent Such subsets $\Psi$ are precisely the \textit{abelian ideals}
of $\Phi^+$. Abelian and ad-nilpotent ideals connect affine Lie
algebras/Weyl groups, the algebra $\bigwedge^\bullet \lie{g}$ of
Maurer--Cartan left-invariant differential forms, combinatorial
conditions on sets of roots, and other areas.
Recent interest in abelian ideals can be traced back to the seminal work
of Kostant (and Peterson) \cite{Ko2} where he showed that abelian ideals
were intricately connected to Cartan decompositions and discrete series.
They have since attracted much attention, including by Cellini--Papi
\cite{CP}, Chari--Dolbin--Ridenour \cite{CDR}, Panyushev \cite{Pa} (and
R\"ohrle \cite{PR}), and Suter \cite{Su}.

Although we do not discuss further connections to abelian ideals in this
paper, we remark that they have several combinatorial properties, such as
the characterization via Equation \eqref{Esuter}. Kostant showed in
\cite[Theorem 7]{Ko1} that the map sending an abelian ideal $\Psi$ to
$\sum_{\mu \in \Psi} \mu \in P$ is one-to-one and yields the highest
weights of certain irreducible summands of the finite-dimensional
$\lie{g}$-module $\bigwedge^\bullet \lie{g}$.
Moreover, it is easy to check that Equation \eqref{Esuter} is satisfied
by all subsets $\wt_J L(\theta)$ for $J \subsetneq I$. In particular, the
abelian ideal $\wt_J L(\theta)$ was denoted in \cite{CDR} by $\lie{i}_0$
and is the unique ``minimal'' ad-nilpotent ideal in the corresponding
parabolic Lie subalgebra $\lie{p}_J$ of $\lie{g}$.
\end{remark}

We now present a few basic results on (positive) weak faces and
closedness, which are used to prove Theorems \ref{Tface} and
\ref{Trigid}. The following are straightforward by using the definitions.

\begin{lemma}\label{Lfield}
Fix subsets $R,R' \subset \R$ and $0 < a \in \R$. Suppose $Y \subset X
\subset \V$, a real vector space.
\begin{enumerate}
\item If $Y \subset X$ is $(R',R)$-closed and $X_1 \subset X$ is
nonempty, then $Y \cap X_1 \subset X_1$ is $(R'_1, R_1)$-closed, where
$R'_1 \subset a \cdot R'$ and $R_1 \subset a \cdot R$.

\item For any $v \in \V$, $Y \subset X$ is $(R',R)$-closed if and only if
$v \pm aY \subset v \pm aX$ is $(R',R)$-closed.

\item For all $\varphi \in \V^*$, $X(\varphi)$ is $(R',R_+)$-closed in
$X$ for all $R,R' \subset \R$.

\item Given an invertible linear transformation $T \in GL(\V)$, $T(Y)
\subset T(X)$ is a (positive) weak $R$-face or $(R',R)$-closed, if and
only if $Y \subset X$ is a weak $R$-face or $(R',R)$-closed,
respectively.

\item If $\varphi(x) \in (0,\infty)$ for some $x \in X$, then
$X(\varphi)$ is a positive weak $R$-face of $X$.
\end{enumerate}
\end{lemma}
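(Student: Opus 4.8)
The plan is to prove each of the five parts by unwinding the definitions of $\ell$, $\disp$, $\supp$ and of $(R',R)$-closedness / (positive) weak $\bba$-face, and recording how these quantities transform under the operation at hand. The statements split into three groups: parts (2) and (4), invariance of all the notions under affine-linear isomorphisms; part (1), a combination of restricting the ambient set and rescaling coefficients; and parts (3), (5), which concern maximizer subsets $X(\varphi)$.

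For (4), given $T \in GL(\V)$, the assignment $f \mapsto f \circ T^{-1}$ is a bijection $\Fin(X,R) \xrightarrow{\sim} \Fin(T(X),R)$ restricting to a bijection $\Fin(Y,R) \xrightarrow{\sim} \Fin(T(Y),R)$, and one checks directly that $\supp(f \circ T^{-1}) = T(\supp f)$, $\ell(f \circ T^{-1}) = \ell(f)$, and $\disp(f \circ T^{-1}) = T(\disp f)$. Since $T$ is injective, $\disp(f_1 \circ T^{-1}) = \disp(f_2 \circ T^{-1})$ iff $\disp f_1 = \disp f_2$, and $\supp(f\circ T^{-1}) \subset T(Y)$ iff $\supp f \subset Y$; substituting these equivalences into each of the three defining implications shows that the property for $T(Y) \subset T(X)$ is equivalent to the same property for $Y \subset X$. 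For (2) I would run the same argument with one of the affine bijections $\phi_\pm\colon x \mapsto v \pm ax$ in place of $T$; here $\ell(f\circ\phi_\pm^{-1}) = \ell(f)$ while $\disp(f\circ\phi_\pm^{-1}) = \ell(f)\,v \pm a\,\disp(f)$, so for two functions with the same $\ell$-value the condition $\disp(f_1\circ\phi_\pm^{-1}) = \disp(f_2\circ\phi_\pm^{-1})$ is equivalent to $\disp f_1 = \disp f_2$ precisely because $a \neq 0$, and the rest is verbatim.

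For (1), suppose $f \in \Fin(X_1, R_1)$ and $g \in \Fin(Y\cap X_1, R_1)$ satisfy $\ell(f) = \ell(g) \in R'_1 \setminus \{0\}$ and $\disp(f) = \disp(g)$. Then $a^{-1}f \in \Fin(X, R)$ and $a^{-1}g \in \Fin(Y, R)$, using $R_1 \subset aR$, $X_1 \subset X$ and $Y \cap X_1 \subset Y$; moreover $\ell(a^{-1}f) = a^{-1}\ell(g) = \ell(a^{-1}g) \in a^{-1}R'_1 \setminus \{0\} \subset R' \setminus \{0\}$ and $\disp(a^{-1}f) = \disp(a^{-1}g)$. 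Then $(R',R)$-closedness of $Y$ in $X$ gives $\supp f = \supp(a^{-1}f) \subset Y$, and since also $\supp f \subset X_1$ we conclude $\supp f \subset Y \cap X_1$, which is what is needed.

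For (3) and (5) I would apply $\varphi$ to the identity $\disp(f) = \disp(g)$ (one may assume $X(\varphi) \neq \emptyset$, else both claims are vacuous), where $g$ is supported on $X(\varphi)$ and $\varphi$ takes there the constant value $c := \max_{x \in X}\varphi(x)$. This yields $c\,\ell(g) = \varphi(\disp g) = \varphi(\disp f) = \sum_x f(x)\varphi(x) \le c\,\ell(f)$, since $f$ is $R_+$-valued and $\varphi \le c$ on $X$, with equality iff $f(x)\,(c-\varphi(x)) = 0$ for all $x$, i.e.\ iff $\supp f \subset X(\varphi)$. For (5), the hypothesis $\varphi(x_0) > 0$ for some $x_0 \in X$ forces $c > 0$, so dividing by $c$ preserves the inequality and gives $\ell(g) \le \ell(f)$ together with the equality clause — exactly the positive weak $R$-face condition. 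For (3) one has in addition $\ell(f) = \ell(g)$, which forces the inequality above to be an equality and hence $\supp f \subset X(\varphi)$; here no sign hypothesis on $\varphi$ is needed, which is precisely why (3) holds for every $R'$ while (5) requires positivity. (Alternatively, (3) follows at once from Lemma \ref{L0} and the observation that a weak $R$-face, being $(\R,R_+)$-closed, is a fortiori $(R',R_+)$-closed.) I do not expect any genuine obstacle; the only points meriting care are that $a\neq 0$ and the invertibility of $T$ are actually used (to transfer equality of $\disp$ in both directions), and that the positivity of $c$ — unavailable in (3) but essential in (5) — is exactly what distinguishes the two maximizer statements.
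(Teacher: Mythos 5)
Your proof is correct and follows essentially the same approach the paper takes: the paper itself declares parts (1)--(4) straightforward from the definitions (which is exactly what your transfer arguments make precise), and for part (5) the paper's computation applying $\varphi$ to the identity $\disp(f)=\disp(g)$, using $m := \varphi(X(\varphi)) > 0$, is the same as yours. Your additional observation that part (3) does not require $c > 0$ because the equality $\ell(f)=\ell(g)$ forces the inequality to collapse regardless of the sign of $c$, together with the alternative route via Lemma \ref{L0}, is a correct and worthwhile clarification of why (3) and (5) have asymmetric hypotheses.
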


Next, if $R = R' = \F_+$ for a subfield $\F \subset \R$, then results in
\cite{KR} relate weak $\F$-faces and positive weak $\F$-faces. We now
show this more generally (and add another equivalent condition) for
$\bba$.

\begin{prop}\label{Pkr}
Fix $Y \subset X \subset \V$ (a real vector space) and an additive
subgroup $0 \neq \bba \subset (\R,+)$. The following are equivalent:
\begin{enumerate}
\item $Y$ is a positive weak $\bba$-face of $X$.

\item $0 \notin Y$, and $Y$ is a weak $\bba$-face of $X \cup \{ 0 \}$ -
i.e.,
\begin{eqnarray*}
&& \sum_{x \in X} a_x x + c \cdot 0 = \sum_{y \in Y} b_y y \in \bba_+ X
\bigcap \bba_+ Y,\ a_x, b_y, c \in \bba_+\ \forall x,y, \quad c + \sum_x
a_x = \sum_y b_y\\
& \implies & c = 0, \quad x \in Y \mbox{ if } a_x > 0.
\end{eqnarray*}

\item $Y$ is a weak $\bba$-face of $X$ and $X \cup \{ 0 \}$; $0$ is not a
nontrivial $\bba_+$-linear combination of $Y$.
\end{enumerate}
\end{prop}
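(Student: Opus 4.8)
The plan is to prove the cycle of implications $(1)\Rightarrow(2)\Rightarrow(3)\Rightarrow(1)$. The single device driving all three steps is to pass between the \emph{inequality} $\ell(g)\le\ell(f)$ in the definition of a positive weak $\bba$-face of $X$ and the \emph{equality} $\ell(f)=\ell(g)$ in the definition of a weak $\bba$-face of $X\cup\{0\}$, by absorbing the nonnegative deficit $c:=\ell(g)-\ell(f)$ into an extra term of mass $c$ at the point $0\in\V$: since $\disp$ is unaffected by a summand $c\cdot 0$ while $\ell$ increases by $c$, adjoining or deleting such a term converts one condition into the other. Two easy observations will be used throughout: because $\bba$ is an additive group, $\ell(f),\ell(g)\in\bba$, so $c=\ell(g)-\ell(f)\in\bba$ and hence $c\in\bba_+$ whenever $c\ge 0$; and an $\bba_+$-valued finitely supported function with $\ell=0$ is identically zero, while $\bba\neq 0$ guarantees some $a\in\bba_+\setminus\{0\}$.

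For $(1)\Rightarrow(2)$, I would first note $0\notin Y$, since otherwise the pair $f=0$ and $g=a\,\mathbf{1}_{\{0\}}$ (with $0<a\in\bba$) has equal $\disp$ but $\ell(g)>\ell(f)$, contradicting positivity. For the weak-face property of $Y$ in $X\cup\{0\}$, given $f\in\Fin(X\cup\{0\},\bba_+)$ and $g\in\Fin(Y,\bba_+)$ with $\disp(f)=\disp(g)$ and $\ell(f)=\ell(g)>0$, I would split off $c:=f(0)$ to get $f':=f-c\,\mathbf{1}_{\{0\}}\in\Fin(X,\bba_+)$ with $\disp(f')=\disp(g)$ and $\ell(f')=\ell(g)-c\le\ell(g)$; the positive weak $\bba$-face inequality then forces $\ell(g)\le\ell(f')$, so $c=0$, $f=f'$, and its equality clause gives $\supp(f)\subset Y$. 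The step $(2)\Rightarrow(3)$ is immediate: $Y$ is a weak $\bba$-face of $X$ because $\Fin(X,\bba_+)\subset\Fin(X\cup\{0\},\bba_+)$, it is a weak $\bba$-face of $X\cup\{0\}$ by hypothesis, and if $0=\sum_{y\in Y}b_yy$ were a nontrivial $\bba_+$-combination then $g=(b_y)_y$ together with $f=\ell(g)\,\mathbf{1}_{\{0\}}$ would have equal $\disp$ $(=0)$ and equal positive $\ell$, forcing $\{0\}=\supp(f)\subset Y$, contrary to $0\notin Y$.

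For $(3)\Rightarrow(1)$, again $0\notin Y$ (else $a\cdot 0$, for $0<a\in\bba$, is a nontrivial $\bba_+$-combination of $Y$). Given $f\in\Fin(X,\bba_+)$ and $g\in\Fin(Y,\bba_+)$ with $\disp(f)=\disp(g)$, I would argue by contradiction: if $c:=\ell(g)-\ell(f)>0$, then $c\in\bba_+\setminus\{0\}$ and $f':=f+c\,\mathbf{1}_{\{0\}}\in\Fin(X\cup\{0\},\bba_+)$ satisfies $\disp(f')=\disp(g)$ and $\ell(f')=\ell(g)>0$, so the weak-face property of $Y$ in $X\cup\{0\}$ forces $0\in\supp(f')\subset Y$, which is absurd; hence $\ell(g)\le\ell(f)$ always. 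For the equality clause I would treat $\ell(g)=\ell(f)=0$ trivially ($f\equiv 0$, $\supp(f)=\emptyset$), invoke the weak-face property of $Y$ in $X$ when $\ell(g)=\ell(f)>0$, and for the converse note that if $\supp(f)\subset Y$ then $f\in\Fin(Y,\bba_+)\subset\Fin(X,\bba_+)$, so applying the inequality just established with $f$ and $g$ interchanged (legitimate since $Y\subset X$) gives $\ell(f)\le\ell(g)$, hence equality.

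I do not expect a genuine obstacle here: the argument is pure convex/combinatorial bookkeeping and uses no Lie theory. The points that need care are keeping track of whether $0\in X$ (harmless, since we only ever alter the coefficient at $0$) and consistently using that $\bba$ is a \emph{group} rather than merely a monoid — this is exactly what returns the deficit $c=\ell(g)-\ell(f)$ to $\bba_+$ when it is nonnegative, and it is the only place the group hypothesis on $\bba$ is needed.
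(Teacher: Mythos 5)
Your proof is correct and follows essentially the same cyclic strategy $(1)\Rightarrow(2)\Rightarrow(3)\Rightarrow(1)$ as the paper, with the same central device of transferring mass to and from the coefficient at $0$ to trade the inequality $\ell(g)\le\ell(f)$ for the equality $\ell(f)=\ell(g)$. The one place you diverge is the converse of the equality clause inside $(3)\Rightarrow(1)$: the paper builds a third auxiliary function $f_1$ by again adjusting the value at $0$ and invokes $0\notin Y$ once more, whereas you simply interchange $f$ and $g$ (legitimate since $\supp(f)\subset Y\subset X$) and reuse the already-established inequality $\ell(g')\le\ell(f')$. Your symmetry argument is slightly cleaner — it avoids a third invocation of the weak-face property of $X\cup\{0\}$ and automatically handles the degenerate case $\ell(g)=\ell(f)=0$, which the paper's phrasing glosses over — but it buys no new generality; both routes rest on the same observations (that $\bba$ being a group puts the deficit $c$ back in $\bba_+$, and that $0\notin Y$ under hypotheses (1) or (3)).
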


\noindent If $1 \in \bba$, then the last part of (3) can be replaced by:
$0 \notin \conv_{\bba}(Y)$; the proof would be similar.

\begin{proof}
We prove a cyclic chain of implications. First assume (1), and choose $0
< a \in \bba$. If $0 \in Y$, then define $f(0) = a,\ g(0) = 2a$, and
$f(x) = g(x) = 0\ \forall x \in \V \setminus \{ 0 \}$. Then $\disp(f) = 0
= \disp(g)$, but $\ell(f) = a < 2a = \ell(g)$, which contradicts the
definitions. Hence $0 \notin Y$.
Now suppose $\disp(f) = \disp(g)$ and $\ell(f) = \ell(g)$ for $f \in
\Fin(X \cup \{ 0 \}, \bba_+)$ and $g \in \Fin(Y, \bba_+)$. Set $f_1 := f$
on $X \setminus \{ 0 \}$ and $f_1(0) := 0$; then $\disp(f_1) = \disp(f) =
\disp(g)$, but $\ell(f_1) \leq \ell(f) = \ell(g)$. By (1), $\ell(f_1) =
\ell(g) = \ell(f)$ and $\supp(f_1) \subset Y$. But then $f(0) = 0$,
whence $f \equiv f_1$ and $\supp(f) \subset Y$ as well. This proves (2).

Now assume (2). Since $Y$ is a weak $\bba$-face of $X \cup \{ 0 \}$ and
$Y \subset X$, hence $Y$ is a weak $\bba$-face of $X$ from the
definitions. It remains to show that $0 \neq \disp(f)$ for any $0 \neq f
\in \Fin(Y, \bba_+)$. Suppose otherwise; then $0 = \sum_i r_i y_i$, where
(finitely many) $0 < r_i \in \bba$, and $y_i \in Y$ are pairwise
distinct. Now define $f(0) := \sum_i r_i$ and $g(y_i) := r_i$ for all $i$
(and $f,g$ are $0$ at all other points). Then $\disp(f) = 0 = \disp(g)$
and $\ell(f) = \sum_i r_i = \ell(g)$, so $\supp(f) = \{ 0 \} \subset Y$,
which is a contradiction.

Finally, we show that $(3) \implies (1)$. Suppose $\disp(f) = \disp(g)$
for $f \in \Fin(X, \bba_+)$ and $g \in \Fin(Y, \bba_+)$. If $\ell(g) >
\ell(f)$, then define $f_1(0) := f(0) + \ell(g) - \ell(f)$, and $f_1 :=
f$ otherwise.
Then $\disp(f_1) = \disp(f) = \disp(g)$, and $\ell(f_1) = \ell(g)$. Since
$Y \subset X \cup \{ 0 \}$ is a weak $\bba$-face, hence $\supp(f_1)
\subset Y$. But then $0 \in Y$. Now choose $0 < a \in \bba$; then $0  = a
\cdot 0$ is a nontrivial $\bba_+$-linear combination of $Y$. This is a
contradiction, so $\ell(g) \leq \ell(f)$.
Now suppose $\ell(g) = \ell(f)$; since $Y \subset X$ is a weak
$\bba$-face, hence $\supp(f) \subset Y$ as desired. Conversely, if
$\supp(f) \subset Y$, then define $f_1(0) := f(0) + \ell(f) - \ell(g)$,
and $f_1 := f$ otherwise. Now $\disp(f_1) = \disp(f) = \disp(g)$ and
$\ell(f_1) = \ell(g)$. Since $Y \subset X \cup \{ 0 \}$ is a weak
$\bba$-face, hence $\supp(f_1) \subset Y$. Moreover, $0 = a \cdot 0
\notin Y$ by assumption (for any $0 < a \in \bba$). Hence $f_1(0) = 0$,
whence $\ell(f) = \ell(g)$ (and $f(0) = 0$), and (1) is proved.
\end{proof}

\begin{remark}\label{Rdistort}
We briefly digress to explain the choice of notation $\ell, \disp$, as
well as a connection to geometric group theory. Let $G$ be an abelian
group and $X \subset G$ a set of generators. The associated Cayley graph
is the quiver $Q_X(G)$ with set of vertices $G$, and edges $g \to gx$ for
all $g \in G,\ x \in X$. Similarly one defines $Q_X(G)$ for all $X
\subset G$.

Given $g,h \in G$ and $X \subset G$, let $\calp_X(g,h)$ be the set of
paths in $Q_X(G)$ from $g$ to $h$, and let $\calp_X^n(g,h)$ be the subset
of paths of length $n$. One can then define the same notions: $\ell :
\Fin(X,\Z_+) \to \Z_+$ and $\disp : \Fin(X,\Z_+) \to G$ in this setting
as well. Now $\ell, \disp$ act on paths, as long as they are considered
to be finite sets of edges together with multiplicities. (Note that one
can add them in any order, since $G$ is assumed to be abelian.) It is now
clear that $\ell$ takes such a path to its ``$X$-length'', and $\disp$ to
the ``displacement'' in $G$. This explains the choice of notation.

We now reinterpret the notions of (positive) weak $\Z$-faces of $X$.
Given $Y \subset X \subset G$, it is easy to see that $Y$ is a weak
$\Z$-face of $X$ if and only if for all $n>0$,
\[
\calp_Y^n(g,h) \neq \emptyset \implies \calp_X^n(g,h) = \calp_Y^n(g,h),
\]

\noindent and $Y$ is a positive weak $\Z$-face of $X$ if and only if $Y$
``detects geodesics'':
\[
\calp_Y(g,h) \neq \emptyset \implies \calp_X^{min}(g,h) = \calp_Y(g,h),
\]

\noindent where $\calp_X^{min}(g,h)$ is the set of geodesics (i.e., paths
of minimal length) from $g$ to $h$ in $Q_X(G)$. In particular, note that
all paths in $Q_Y(G)$ (i.e., in $\calp_Y(g,h)$) must have the same
length. Moreover, if $Y$ is a positive weak face of $X$ and $X$ is
finite, the subgroup $\langle Y \rangle$ is \textit{undistorted} in
$\langle X \rangle$ (see e.g.~\cite{Gr}). In other words, the distortion
function of $\langle Y \rangle$ in $\langle X \rangle$ is
$\Delta^{\langle X \rangle}_{\langle Y \rangle}(n) \approx n$.
\end{remark}

\subsection{Proof of the results}

We now show Theorems \ref{Tface} and \ref{Trigid}. To do so, a better
understanding of the sets $\wt_J \vla$ is needed. Also recall the
following standard notation: a weight vector $m \in M_\lambda$ in a
$\lie{g}$-module $M$ is \textit{maximal} if $\lie{n}^+ m = 0$. In this
case $M(\lambda)$ maps into $M$.

\begin{lemma}\label{Lweights}
Suppose $M(\lambda) \twoheadrightarrow \vla$ (with highest weight space
$\C v_\lambda$) and $\mu \in \wt_J \vla $, for some $\lambda \in
\lie{h}^*$ and $J \subset I$. Then there exist $\mu_j \in \wt_J \vla$
such that (in the standard partial order on $\lie h^*$,)
\[
\lambda = \mu_0 > \mu_1 > \dots > \mu_N = \mu, \qquad \mu_j - \mu_{j+1}
\in \Delta_J\ \forall j, \qquad N \geq 0.
\]

\noindent Moreover, if $\vla = L(\lambda)$ is simple, then so is the
$\lie{g}_J$-submodule $\vla_J := U(\lie{g}_J) v_\lambda$.
\end{lemma}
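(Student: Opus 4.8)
The plan is to prove the chain statement by the standard connectedness argument for the weights of a highest weight module, and the ``moreover'' (simplicity) statement via the contravariant (Shapovalov) form. The key preliminary observation is: since $\vla = U(\lie{n}^-) v_\lambda$ and $\lie{n}^-$ is generated as a Lie algebra by the simple lowering operators $x^-_{\alpha_i}$ ($i \in I$), the space $\vla$ is spanned over $\C$ by the monomials $x^-_{\alpha_{j_1}} \cdots x^-_{\alpha_{j_k}} v_\lambda$ ($k \geq 0$, $j_\ell \in I$), such a monomial having weight $\lambda - \sum_\ell \alpha_{j_\ell}$. If $\mu \in \wt_J \vla$, so that $\lambda - \mu \in \Z_+ \Delta_J$, then comparing the coefficient of $\alpha_\ell$ for each $\ell \in I \setminus J$ forces $j_\ell \in J$ for every $\ell$ in any such monomial of weight $\mu$; hence $\vla_\mu$ is spanned by the monomials with all $j_\ell \in J$. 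Since any such monomial lies in $U(\lie{g}_J) v_\lambda = \vla_J$, this also shows $\vla_\mu \subseteq \vla_J$ for $\mu \in \wt_J\vla$, so that $\wt \vla_J = \wt_J \vla$ and $\vla_J = \bigoplus_{\mu \in \wt_J \vla} \vla_\mu$; I will use this for the simplicity statement.

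For the chain I would induct downwards on $\hgt(\lambda - \mu) \in \Z_+$, the case $\mu = \lambda$ being trivial ($N = 0$). If $\mu \neq \lambda$, choose a \emph{nonzero} monomial $w = x^-_{\alpha_{j_1}} \cdots x^-_{\alpha_{j_k}} v_\lambda \in \vla_\mu$ with all $j_\ell \in J$ and $k \geq 1$ (possible by the previous paragraph). Then $w' := x^-_{\alpha_{j_2}} \cdots x^-_{\alpha_{j_k}} v_\lambda$ satisfies $x^-_{\alpha_{j_1}} w' = w \neq 0$, hence $w' \neq 0$; it lies in $\vla_{\mu + \alpha_{j_1}}$, and $\lambda - (\mu + \alpha_{j_1}) = \sum_{\ell \geq 2} \alpha_{j_\ell} \in \Z_+ \Delta_J$, so $\mu + \alpha_{j_1} \in \wt_J \vla$ with $\hgt(\lambda - \mu - \alpha_{j_1})$ one smaller. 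Applying the inductive hypothesis to $\mu + \alpha_{j_1}$ and then appending $\mu$ (noting $\mu < \mu + \alpha_{j_1}$ with difference the simple root $\alpha_{j_1} \in \Delta_J$) produces the required chain.

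For the simplicity statement, suppose $\vla = L(\lambda)$. By the first paragraph $\vla_J = \bigoplus_{\mu \in \wt_J L(\lambda)} L(\lambda)_\mu$ is a $\lie{g}_J$-submodule that is a highest weight $\lie{g}_J$-module with finite-dimensional weight spaces --- it is cyclic on $v_\lambda$, which is annihilated by $\lie{n}^+ \cap \lie{g}_J \subseteq \lie{n}^+$. Recall that $L(\lambda)$ carries a nondegenerate contravariant (Shapovalov) form relative to the Chevalley anti-involution $\sigma$ of $\lie{g}$ --- nondegenerate because the radical of any contravariant form is a submodule while $L(\lambda)$ is simple --- and that such a form pairs distinct weight spaces trivially, hence is nondegenerate on each $L(\lambda)_\mu$. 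Since $\sigma$ restricts to the Chevalley anti-involution of $\lie{g}_J$ and $\vla_J$ is $\lie{g}_J$-stable, the restriction of this form to $\vla_J$ is a contravariant form for $\lie{g}_J$; and because $\vla_J$ is the sum of the \emph{entire} weight spaces $L(\lambda)_\mu$, $\mu \in \wt_J L(\lambda)$, that restriction remains block-diagonal across these weights with each block nondegenerate, hence is itself nondegenerate. Since a highest weight module over a semisimple Lie algebra is simple exactly when its contravariant form is nondegenerate, $\vla_J$ is simple.

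The routine ingredient is the chain statement, which is just the familiar fact that the weights of a highest weight module are linked by simple-root steps. The step needing genuine care is the simplicity statement: the crucial point is that for $\vla = L(\lambda)$ the submodule $\vla_J$ is \emph{saturated}, i.e.\ a union of entire weight spaces of $L(\lambda)$ --- which is precisely what keeps the restricted Shapovalov form nondegenerate --- whereupon the standard simplicity criterion applies.
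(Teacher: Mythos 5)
Your proof is correct.  For the chain statement you take essentially the same route as the paper: both arguments observe that $\vla_\mu$ is spanned by monomials $x^-_{\alpha_{j_k}}\cdots x^-_{\alpha_{j_1}}v_\lambda$ with all $j_\ell\in J$ (by comparing coefficients in $\Delta$), pick a nonzero such monomial, and read off the chain from its partial products; the only difference is that you package this as an induction on $\hgt(\lambda-\mu)$ while the paper just exhibits the chain $\mu_j:=\wt\bigl(x^-_{\alpha_{i_j}}\cdots x^-_{\alpha_{i_1}}v_\lambda\bigr)$ directly.

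For the ``moreover'' clause you take a genuinely different route.  The paper argues contrapositively via maximal vectors: if $\vla_J$ is not $\lie{g}_J$-simple then it has a maximal vector $v_\mu\in U(\lie{n}^-_J)_{\mu-\lambda}v_\lambda$ with $\mu\neq\lambda$, and since $x^+_{\alpha_i}$ for $i\notin J$ commutes with $U(\lie{n}^-_J)$ (Serre/Chevalley relations), $v_\mu$ is automatically a maximal vector for all of $\lie{g}$, contradicting simplicity of $L(\lambda)$.  You instead invoke the Shapovalov form: you first show $\vla_J=\bigoplus_{\mu\in\wt_J L(\lambda)}L(\lambda)_\mu$ is \emph{saturated} (a union of full $\lie{h}$-weight spaces), so the nondegenerate contravariant form on $L(\lambda)$ restricts to a nondegenerate contravariant $\lie{g}_J$-form on $\vla_J$, and then cite the criterion that a highest weight module with nondegenerate contravariant form is simple.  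The paper's argument is shorter and entirely self-contained (only PBW and commutation relations); yours requires the Shapovalov form and its simplicity criterion but isolates the structural point — saturation of $\vla_J$ — that makes the restriction work.  Both are valid; note that saturation is also exactly what makes the paper's argument go through (it guarantees a maximal weight of a proper $\lie{g}_J$-submodule gives a weight space of $L(\lambda)$ itself containing a maximal vector), so the two proofs are ultimately resting on the same observation stated differently.
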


\noindent In fact, it turns out that a more general phenomenon is true;
see Theorem \ref{Tkumar} in the appendix.

\begin{proof}
Given $\mu \in \wt_J \vla$, $0 \neq \vla_\mu = U(\lie{n}^-)_{\mu -
\lambda} v_\lambda$, and every such weight vector in $U(\lie{n}^-)$ is a
linear combination of Lie words generated by the $x_{\alpha_i}^-$ (with
$\alpha_i \in \Delta$). Hence there is some $f$ in the subalgebra $R :=
\C \langle \{ x_{\alpha_i}^- \} \rangle$ of $U(\lie{n}^-) \subset
U(\lie{g})$, such that $f v_\lambda \neq 0$. Writing $f$ as a $\C$-linear
combination of monomial words (each of weight $\mu - \lambda$) in this
image $R$ of the free algebra on $\{ x_{\alpha_i}^- : i \in I \}$, at
least one such monomial word $x^-_{\alpha_{i_N}} \cdots
x^-_{\alpha_{i_2}} x^-_{\alpha_{i_1}}$ does not kill $v_\lambda$ (with
$i_j \in I\ \forall j$). Hence $\mu_j := \wt (x^-_{\alpha_{i_j}}
x^-_{\alpha_{i_{j-1}} } \cdots x^-_{\alpha_{i_1}} v_\lambda) \in \wt
\vla$ for all $j$.
Since $\mu \in \wt_J \vla \subset \lambda - \Z_+ \Delta_J$ and $\Delta$
is a basis of $\lie{h}^*$, hence $\mu_j \in \wt_J \vla$ and $\mu_j -
\mu_{j+1} = \alpha_{i_{j+1}} \in \Delta_J$ for all $j < N$. This shows
the first part.

To show the second statement, suppose $\vla_J$ is not a simple
$\lie{g}_J$-module.
Define $\lie{n}^\pm_J$ to be the Lie subalgebra generated by $\{
x_{\alpha_j}^\pm : j \in J \}$. Then there exists a maximal vector
$v_\mu$ (with $\mu \neq \lambda$) in the weight space $(\vla_J)_\mu =
U(\lie{n}^-_J)_{\mu - \lambda} v_\lambda$, which is killed by all of
$\lie{n}^+_J$. By the Serre relations, $v_\mu$ is also a maximal vector
in $\vla$, since $\lie{n}^+_{I \setminus J}$ commutes with $\lie{n}^-_J$.
Since $\mu \neq \lambda$, $\vla$ is not simple either.
\end{proof}

We now show the main result in this section.

\begin{proof}[Proof of Theorem \ref{Tface}]
Define $\vla_J$ as in Lemma \ref{Lweights}. Then one inclusion for the
first claim is obvious: $\wt \vla_J \subset \wt_J \vla$.
Conversely, given $\mu \in \wt_J \vla$, the proof of Lemma \ref{Lweights}
implies that $\vla_\mu$ is spanned by monomial words in $\lie{n}^-_J$
applied to $v_\lambda$. In particular, $\mu \in \wt \vla_J$, as desired.
Now $\wt_J \vla$ is contained in $\lambda - \Z_+ \Delta_J$, and
$\rho_{I \setminus J} \in P^+$. This easily shows that if $\mu \in \wt
\vla \subset \wt M(\lambda)$, then $(\lambda, \rho_{I \setminus J}) -
(\mu, \rho_{I \setminus J}) \in \Z_+$, with equality if and only if $\mu
\in \lambda - \Z_+ \Delta_J$. Thus,
\begin{equation}\label{Esub}
\wt U(\lie{g}_J) v_\lambda = \wt \vla_J = \wt_J \vla = (\wt \vla)(\rho_{I
\setminus J}).
\end{equation}

Next, clearly $(1) \implies (2) \implies (3) \implies (4)$ by Equation
\eqref{Esub} and Lemma \ref{Lfield} (dividing by any $0 < a \in \bba$).
Now assume (4), as well as that $\lambda - \alpha_i \in \wt \vla\ \forall
i \in I$. Define $J := \{ i \in I : \lambda - \alpha_i \in Y \}$. We
claim that $Y = \wt_J \vla$, which proves (1). To see the claim, first
suppose that $\mu \in \wt_J \vla$. By Lemma \ref{Lweights}, there exist
$\mu_0 = \lambda > \mu_1 > \dots > \mu_N = \mu$ such that $\mu_{i-1} -
\mu_i = \alpha_{l_i} \in \Delta_J$ for all $1 \leq i \leq N$. Then $l_i
\in J$ and $\lambda - \alpha_{l_i} \in Y$ for all $i$. We claim that $\mu
= \mu_N \in Y$ by induction on $N$. First, this is true for $N=0,1$ by
assumption. Now if $\mu_0, \dots, \mu_{k-1} \in Y$, then
\[
\mu_0 + \mu_k = \mu_{k-1} + (\lambda - \alpha_{l_k}).
\]

\noindent Since both terms on the right are in $Y$, and $Y$ is $(\{ 2 \},
\{ 1, 2 \})$-closed in $X$, hence so are the terms on the left, and the
claim follows by induction. This proves one inclusion: $\wt_J \vla
\subset Y$.

Now choose any weight $\mu = \lambda - \sum_{i \in I} n_i \alpha_i \in
Y$. Again by Lemma \ref{Lweights}, there exist weights $\mu_0 = \lambda >
\mu_1 > \dots > \mu_N = \mu$ with $\mu_{i-1} - \mu_i = \alpha_{l_i}$ for
some $l_i \in I$. The next step is to show that all $\mu_i \in Y$ and all
$l_i \in J$, by downward induction on $i$. To begin, $\mu_{N-1} +
(\lambda - \alpha_{l_N}) = \mu_0 + \mu_N = \lambda + \mu$. Since both
terms on the right are in $Y$, so are the terms on the left. Continue by
induction, as above.
This argument shows that if $n_i > 0$ for any $i$ (in the definition of
$\mu$ above), then $\lambda - \alpha_i \in Y$, so $i \in J$. But then
$\mu = \lambda - \sum_{i \ : \ n_i > 0} n_i \alpha_i \in \wt_J \vla$, as
desired.
\end{proof}

We conclude this part by showing the remaining unproved result in this
section.

\begin{proof}[Proof of Theorem \ref{Trigid}]
In this proof, we repeatedly use Proposition \ref{Pkr} without
necessarily referring to it henceforth. Set $Y := \wt_J \vla \subset X =
\wt \vla$.

First suppose that $\lambda \notin \bba \Delta$, and $J \subset I$ is
arbitrary. One easily checks that $0 \notin \wt_J \vla$, so it suffices
to show that $\wt_J \vla$ is a weak $\bba$-face of $\{ 0 \} \cup \wt
\vla$. Suppose $\sum_{y \in Y} m_y y = \sum_{x \in X} r_x x + (\sum_y m_y
- \sum_x r_x) 0$, with $\sum_y m_y \geq \sum_x r_x$ and all $m_y, r_x \in
\bba_+$. Then we have:
\[
\sum_y m_y (\lambda - y) = \sum_x r_x (\lambda - x) + (\sum_y m_y -
\sum_x r_x) \lambda.
\]

\noindent The left side is in $\bba_+ \Delta_J$, whence so is the right
side. Now $\lambda - x \in \Z_+ \Delta$ and $\lambda \notin \bba \Delta$,
so by the independence of $\Delta$, $\sum_y m_y = \sum_x r_x$ and
$\lambda - x \in \Z_+ \Delta_J$ whenever $r_x > 0$. In particular, $\wt_J
\vla$ is a weak $\bba$-face of $\{ 0 \} \cup \wt \vla$, and we are done
by Proposition \ref{Pkr}.

If $\lambda \in \bba \Delta$ instead, fix $j_0 \notin J$ such that
$(\lambda, \omega_{j_0}) > 0$. For all $\mu = \lambda - \sum_{i \in J}
a_i \alpha_i \in \wt_J \vla$, we have $(\mu, \omega_{j_0}) = (\lambda,
\omega_{j_0}) > 0$ by assumption, so $0 \notin \wt_J \vla$. Now say
$\sum_i a_i (\lambda - \mu_i) = \sum_j b_j (\lambda - \beta_j) + c \cdot
0$ and $\sum_i a_i = \sum_j b_j + c$ for $a_i, b_j, c \in \bba_+, \mu_i
\in \Z_+ \Delta_J, \beta_j \in \Z_+ \Delta$. Taking the inner product with
$\omega_{j_0}$,
\[
D \sum_i a_i = D \sum_j b_j - \sum_j b_j (\beta_j, \omega_{j_0}) \leq D
\sum_j b_j,
\]

\noindent where $D = (\lambda, \omega_{j_0}) > 0$. Dividing, $\sum_i a_i
\leq \sum_j b_j = \sum_i a_i - c \leq \sum_i a_i$, whence the two sums
are equal and $c=0$. Thus $\sum_j b_j \beta_j = \sum_i a_i \mu_i \in
\bba_+ \Delta_J$, whence $\beta_j \in \Z_+ \Delta_J\ \forall j$.
Therefore $Y = \wt_J \vla$ is a weak $\bba$-face of $\{ 0 \} \cup \wt
\vla$, and $Y$ is a positive weak $\bba$-face of $\wt \vla$ by
Proposition \ref{Pkr}.

Now assume that $\lambda - \alpha_i \in \wt \vla\ \forall i$. To show the
(contrapositive of the) converse, write $\lambda = \sum_{i \in I_+} c_i
\alpha_i - \sum_{j \in I_-} d_j \alpha_j$, where $c_i, d_j \in \bba_+$
and $I_\pm := \{ i \in I : \pm (\lambda, \omega_i) > 0 \}$. Then for $r
\in \R$,
\[
\left( r + \sum_{j \in I_-} r d_j \right) \lambda + \sum_{i \in I_+} r
c_i (\lambda - \alpha_i) = \sum_{i \in I_+} r c_i \cdot \lambda + \sum_{j
\in I_-} r d_j (\lambda - \alpha_j).
\]

\noindent The weights on the left are in $Y$, since $I_+ \subset J$. Now
choose $0 < r := |a| \in \bba$ as in the assumptions. Then the
coefficients on the left side add up to $|a|(1 + \sum_{i \in I_+} c_i +
\sum_{j \in I_-} d_j)$, which is larger than the sum of the right-hand
coefficients. Hence $Y$ is not a positive weak $\bba$-face of $\wt \vla$.
\end{proof}

\begin{remark}
The above proof also shows that $\wt_J \vla$ is not a positive weak
$\bba$-face of $\wt \vla$ if $\Z \cap \bba \neq 0$ and $\lambda \in \bba
\Delta_{J \setminus J_0} - \bba_+ \Delta_{I \setminus (J \cup J_0)}$,
where $J_0 := \{ i \in I : \lambda(h_i) = 0 \}$. This is because if $i
\notin J_0$, then $\lambda - \alpha_i \in \wt L(\lambda) \subset \wt
\vla$.
\end{remark}

\subsection{Connection to previous work}

We now show how Theorems \ref{Tface} and \ref{Trigid} provide alternate
proofs of results in previous papers, and hold for all highest weight
modules $\vla$ for ``generic'' $\lambda$.

\begin{cor}\label{Cprev}
Fix $\lambda \in \lie{h}^*$, $M(\lambda) \twoheadrightarrow \vla$, and an
additive subgroup $0 \neq \bba \subset (\R,+)$. Then
Theorems \ref{Tface} and \ref{Trigid} classify:
\begin{enumerate}
\item all (positive) weak $\bba$-faces of $\wt \vla$ containing
$\lambda$, if $\lambda$ is simply-regular.

\item all (positive) weak $\bba$-faces of $\wt \vla$, if $\lambda - \Z_+
\alpha_i \subset \wt \vla$ for all $i \in I$.

\item all $(\{ 2 \}, \{ 1, 2 \})$-closed subsets of $\wt \vla$, if $\vla
= M(\lambda)$.
\end{enumerate}
\end{cor}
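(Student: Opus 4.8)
The plan is to deduce all three parts from the equivalences already packaged in Theorems \ref{Tface} and \ref{Trigid}. In each case the one hypothesis that must be checked is $\lambda-\alpha_i\in\wt\vla$ for every $i\in I$: this is exactly the condition under which parts (1)--(4) of Theorem \ref{Tface} become equivalent (so that the weak $\bba$-faces -- and the $(\{2\},\{1,2\})$-closed subsets -- of $\wt\vla$ containing $\lambda$ are precisely the sets $\wt_J\vla$, $J\subseteq I$), and under which the converse half of Theorem \ref{Trigid} applies (so that one knows which $\wt_J\vla$ are the \emph{positive} weak $\bba$-faces). For parts (2) and (3) there is one extra task: since the assertion there is about \emph{all} such closed subsets, not only those containing $\lambda$, I must show the relevant closed subsets automatically contain $\lambda$.

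For part (1): if $\lambda$ is simply-regular then $\lambda(h_i)=2(\lambda,\alpha_i)/(\alpha_i,\alpha_i)\neq 0$ for all $i$, and I would argue $\lambda-\alpha_i\in\wt L(\lambda)$ directly. Writing $L(\lambda)=M(\lambda)/N$ with highest weight vector $v_\lambda=m_\lambda+N$: if $x^-_{\alpha_i}v_\lambda=0$ then $x^-_{\alpha_i}m_\lambda\in N$, hence $\lambda(h_i)\,m_\lambda=x^+_{\alpha_i}x^-_{\alpha_i}m_\lambda\in N$; since $\lambda(h_i)\neq 0$ this gives $m_\lambda\in N$, i.e. $N=M(\lambda)$, contradicting $L(\lambda)\neq 0$. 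As $\vla\twoheadrightarrow L(\lambda)$ gives $\wt L(\lambda)\subseteq\wt\vla$, we obtain $\lambda-\alpha_i\in\wt\vla$ for all $i$; now Theorem \ref{Tface} classifies the weak $\bba$-faces of $\wt\vla$ containing $\lambda$ as the $\wt_J\vla$, and -- a positive weak $\bba$-face being in particular a weak $\bba$-face by Proposition \ref{Pkr} -- Theorem \ref{Trigid} identifies the positive ones among them (the list being complete as soon as $a\bba\subseteq\bba$ for some $0\neq a\in\bba$, e.g.\ $\Z\cap\bba\neq 0$).

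For part (2), under the hypothesis $\lambda-\Z_+\alpha_i\subseteq\wt\vla$ for all $i$, the crucial step is to show that every nonempty weak $\bba$-face $Y\subseteq\wt\vla$ contains $\lambda$; the reasoning of part (1) then finishes. Fix $0<a\in\bba$ and $\mu=\lambda-\sum_i n_i\alpha_i\in Y$ with $N:=\sum_i n_i>0$. Using the displacement identity $N\mu=\sum_{i\,:\,n_i>0}n_i(\lambda-N\alpha_i)$ together with $\lambda-N\alpha_i\in\wt\vla$: let $g\in\Fin(Y,\bba_+)$ be supported at $\mu$ with $g(\mu)=aN$, and $f\in\Fin(\wt\vla,\bba_+)$ with $f(\lambda-N\alpha_i)=an_i$ for $n_i>0$; then $\ell(f)=\ell(g)=aN>0$ and $\disp(f)=\disp(g)=aN\mu$, so the weak-face property gives $\lambda-N\alpha_{i_0}\in Y$ for some $i_0$ with $n_{i_0}>0$. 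Repeating with $2(\lambda-N\alpha_{i_0})=\lambda+(\lambda-2N\alpha_{i_0})$ (legitimate as $\lambda-2N\alpha_{i_0}\in\wt\vla$) forces $\lambda\in Y$. For part (3), $\vla=M(\lambda)$ has $\wt\vla=\lambda-\Z_+\Delta$, so the hypothesis of (2) holds; moreover any nonempty $(\{2\},\{1,2\})$-closed $Y\subseteq\wt M(\lambda)$ contains $\lambda$, via the relation $2\mu=\lambda+(\lambda-2\nu)$ for $\mu=\lambda-\nu\in Y$ (here $\lambda,\lambda-2\nu\in\wt M(\lambda)$), tested with $g(\mu)=2$ and $f(\lambda)=f(\lambda-2\nu)=1$. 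Theorem \ref{Tface} then identifies the nonempty $(\{2\},\{1,2\})$-closed subsets of $\wt M(\lambda)$ with the sets $\wt_J M(\lambda)=\lambda-\Z_+\Delta_J$.

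The bookkeeping with $\ell$ and $\disp$ is routine; the one real obstacle is the ``closed $\Rightarrow$ contains $\lambda$'' step in (2) and (3). Its subtlety is that the hypotheses only guarantee weights along individual simple-root directions $\lambda-\Z_+\alpha_i$, so a weight $\mu=\lambda-\sum n_i\alpha_i\in Y$ supported on several roots cannot be linked to $\lambda$ by a two-point relation inside $\wt\vla$ (the would-be midpoint $\lambda-2\sum n_i\alpha_i$ need not be a weight); routing first through the single-root weight $\lambda-N\alpha_{i_0}$ is what circumvents this.
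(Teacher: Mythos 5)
Your proposal is correct and follows essentially the same route as the paper: verify $\lambda-\alpha_i\in\wt L(\lambda)\subset\wt\vla$ via the $\lie{sl}_2$ computation in the simply-regular case, show that under the root-string hypothesis every weak $\bba$-face (resp.\ $(\{2\},\{1,2\})$-closed subset) must contain $\lambda$, and then invoke Theorems \ref{Tface} and \ref{Trigid}. The only cosmetic difference is that the paper reaches $\lambda\in Y$ in one step via the identity $(1+|I|)\mu=\lambda+\sum_{i\in I}(\lambda-(1+|I|)n_i\alpha_i)$, whereas you route through an intermediate single-root weight $\lambda-N\alpha_{i_0}$ in two steps; both are valid.
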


\noindent In this result, to classify the positive weak $\bba$-faces, we
also assume that $1 \in \bba$.

\begin{proof}
If $\lambda - \alpha_i \in \wt \vla$ for all $i \in I$, then every weak
$\bba$-face containing $\lambda$ is of the form $\wt_J \vla$ for some $J
\subset I$, by Theorem \ref{Tface}. Hence so is every positive weak
$\bba$-face containing $\lambda$ (by the definitions, or by Proposition
\ref{Pkr}); now Theorem \ref{Trigid} classifies all the positive weak
$\bba$-faces.

Next, suppose $\lambda \in \lie{h}^*$ is simply-regular and $\vla$ is
arbitrary. It suffices to prove that $\lambda - \alpha_i \in \wt \vla$
for all $i \in I$; this holds if we show it for the irreducible quotient
$L(\lambda)$ of $\vla$. Now compute:
\[
x_{\alpha_i}^+ (x_{\alpha_i}^- v_\lambda) = h_{\alpha_i} v_\lambda =
(2(\lambda, \alpha_i) / (\alpha_i, \alpha_i)) v_\lambda,
\]

\noindent and this is nonzero for all $i \in I$ because $\lambda$ is
simply-regular. This implies that $x_{\alpha_i}^- v_\lambda$ is nonzero
in $L(\lambda)$, which proves the claim for $L(\lambda)$, and hence for
$\vla$.

Now assume that $\lambda$ is arbitrary and $\lambda - \Z_+ \alpha_i
\subset \wt \vla$ for all $i \in I$. If $\mu := \lambda - \sum_{i \in I}
n_i \alpha_i \in Y$, then $\displaystyle (1 + |I|) \mu = \lambda +
\sum_{i \in I} \left( \lambda - (1 + |I|)n_i \alpha_i \right)$. Hence
$\lambda \in Y$ since $Y \subset \wt \vla$ is a weak $\bba$-face.
Finally, suppose $Y \subset \wt M(\lambda)$ is $(\{ 2 \}, \{ 1, 2
\})$-closed (e.g., a weak $\bba$-face). If $y = \lambda - \sum_{i \in I}
n_i \alpha_i \in Y$, then $\lambda + (\lambda - \sum_i 2 n_i \alpha_i) =
y + y$, so $\lambda \in Y$, as claimed. But now $Y = \wt_J \vla$ by
Theorem \ref{Tface}.
\end{proof}

We end this section by extending Theorem \ref{Tvin} by Vinberg and
others, from finite-dimensional modules, to arbitrary highest weight
modules $\vla$ for simply-regular $\lambda$.

\begin{cor}\label{Cvinberg}
Suppose $\lambda$ is simply-regular: $\lambda(h_i) \neq 0\ \forall i \in
I$. Also fix an arbitrary highest weight module $M(\lambda)
\twoheadrightarrow \vla$. Then the {\em face map}
$\mathscr{F}^{(1)}_{\vla}$, which sends a set $J \subset I$ of simple
roots to the corresponding weak face $\wt_J \vla$ of $\wt \vla$, is
one-to-one.
\end{cor}

\noindent The result is an immediate corollary of the last assertion in
Theorem \ref{Tface}, and provides a complete classification of all
inclusion relations between the (weak) faces of $\wt \vla$.

\section{Finite maximizer subsets and parabolic Verma
modules}\label{S5}

The rest of this paper is devoted to proving the main theorems stated in
Section \ref{S2}. In this section, we analyze in detail the weak
$\bba$-faces $\wt_J \vla$ that are finite, and thus prove Theorem
\ref{T1}. We first introduce and study an important tool needed here and
below: the maps $\varpi_J$.

\begin{defn}
Given $J \subset I$, define $\pi_J : \lie{h}^* = \C \Omega_I
\twoheadrightarrow \C \Omega_J$ to be the projection map with kernel $\C
\Omega_{I \setminus J}$, where for any subset $J \subset I$, $\Omega_J$
denotes the set of fundamental weights $\{ \omega_j : j \in J \}$.
Also define $\varpi_J : \lambda + \C \Delta_J \to \pi_J(\lambda) + \C
\Delta_J$ (where the codomain comes from $\lie{g}_J$) as follows:
$\varpi_J(\lambda + \mu) := \pi_J(\lambda) + \mu$.
\end{defn}

\begin{remark}\label{Rfacts}
Observe that for all $\lambda \in \lie{h}^*$ and $J \subset I$,
$\pi_J(\lambda) = \sum_{j \in J} \lambda(h_j) \omega_j$.
Moreover, for all $\lambda$ and $J$, $\pi_J(\lambda)(h_i)$ equals
$\lambda(h_i)$ or $0$, depending on whether or not $i \in J$.
\end{remark}

\begin{lemma}\label{Lfacts}
Suppose $\lambda \in \lie{h}^*$ and $J \subset I$. Also fix a highest
weight module $M(\lambda) \twoheadrightarrow \vla$, with highest weight
vector $0 \neq v_\lambda \in \vla_\lambda$.
\begin{enumerate}
\item $J \subset J_\lambda$ if and only if $\pi_J(\lambda) \in P^+$ (in
fact, in $P_J^+$).

\item Let $\vla_J := U(\lie{g}_J) v_\lambda$. Then for all $J,J' \subset
I$, $(\lambda - \Z_+ \Delta_{J'}) \cap \vla_J = \wt_{J \cap J'} \vla$.

\item $\vla_J$ is a highest weight $\lie{g}_J$-module with highest weight
$\pi_J(\lambda)$. In other words, $M_J(\pi_J(\lambda)) \twoheadrightarrow
U(\lie{g}_J) v_\lambda$, where $M_J$ denotes the corresponding Verma
$\lie{g}_J$-module.

\item For all $w \in W_J$ and $\mu \in \C \Delta_J$,
$w(\varpi_J(\lambda + \mu)) = \varpi_J(w(\lambda + \mu))$.
\end{enumerate}
\end{lemma}

\noindent For all $\vla$ and $J \subset J(\vla)$, $\varpi_J$
identifies some weights of the highest weight $\lie{g}$-module $\vla$
with those of a finite-dimensional simple $\lie{g}_J$-module. More
precisely,
$\varpi_J : \wt_J \vla \to L_J(\pi_J(\lambda))$ is a bijection.

\begin{proof}
(1) follows from the definitions; (2) from the linear independence of
$\Delta$ and Equation \eqref{Esub}; and (3) from Equation \eqref{Efacts}
and Remark \ref{Rfacts}. Finally for (4), note that the computation of $w
\mu$ in either $\lie{g}$ or $\lie{g}_J$ yields the same answer in $\C
\Delta_J$, since it only depends on the root (sub)system $\Phi_J$ and the
corresponding Dynkin (sub)diagram. Thus, we can set $\mu = 0$ and prove
the result by induction on the length $\ell(w) = \ell_J(w)$ of $w \in
W_J$, the base case of $\ell(w) = 0$ being obvious. Now say the statement
holds for $w \in W$, and write: $w(\lambda) = \lambda - \nu$, with $\nu
\in \C \Delta_J$. Given $j \in J$,
\begin{eqnarray*}
(s_j w)(\varpi_J(\lambda)) & = & s_j \varpi_J(w(\lambda)) = s_j
\varpi_J(\lambda - \nu) = s_j(\pi_J(\lambda) - \nu) = \pi_J(\lambda) -
\pi_J(\lambda)(h_j) \alpha_j - s_j(\nu),\\
s_j(w(\lambda)) & = & s_j(\lambda - \nu)
= \lambda - \lambda(h_j) \alpha_j - s_j(\nu).
\end{eqnarray*}

\noindent But $\lambda(h_j) = \pi_J(\lambda)(h_j)$ by Remark
\ref{Rfacts}, and as above, the computation of $s_j(\nu)$ in either
setting is the same. Hence $\varpi_J(s_j(w(\lambda))) = (s_j
w)(\varpi_J(\lambda))$ and the proof is complete by induction.
\end{proof}

\subsection{The finite-dimensional ``top'' of a highest weight module}

The heart of this section is in the following result -- and it
immediately implies much of Theorem \ref{T1}.

\begin{prop}\label{Pgvm}
Fix $\lambda \in \lie{h}^*,\ J \subset I$, and a highest weight module
$M(\lambda) \twoheadrightarrow \vla$ with highest weight vector $0 \neq
v_\lambda \in \vla_\lambda$. Then the following are equivalent:
\begin{enumerate}
\item $J \subset J_\lambda$ and $M(\lambda) \twoheadrightarrow
M(\lambda,J) \twoheadrightarrow \vla$.

\item $J \subset J_\lambda$ and $\vla_J := U(\lie{g}_J) v_\lambda \cong
L_J(\pi_J(\lambda))$, the simple highest weight $\lie{g}_J$-module.

\item $\dim U(\lie{g}_J) v_\lambda < \infty$.

\item $\wt_J \vla$ is finite.

\item $\wt_J \vla$ is $W_J$-stable.
\end{enumerate}
\end{prop}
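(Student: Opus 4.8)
The plan is to prove Proposition \ref{Pgvm} by establishing a cyclic chain of implications $(1) \Rightarrow (2) \Rightarrow (3) \Rightarrow (4) \Rightarrow (5) \Rightarrow (1)$, exploiting the bijection $\varpi_J$ and the fact (Lemma \ref{Lfacts}(3)) that $\vla_J := U(\lie{g}_J) v_\lambda$ is a highest weight $\lie{g}_J$-module with highest weight $\pi_J(\lambda)$. The key observation throughout is that $\wt_J \vla = \wt \vla_J$ by Theorem \ref{Tface}/Equation \eqref{Esub}, so all conditions (3),(4),(5) are really statements about the single $\lie{g}_J$-module $\vla_J$, transported from $\lie{g}$-data to $\lie{g}_J$-data via $\varpi_J$.

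First I would handle $(1) \Rightarrow (2)$: if $M(\lambda,J) \twoheadrightarrow \vla$, then applying $U(\lie{g}_J)$ to the highest weight vector and using that $M(\lambda,J)$ restricted to the Levi has top piece $L_J(\lambda)$ (Definition \ref{Dgvm} and the cited properties of parabolic Verma modules), we get a surjection $L_J(\pi_J(\lambda)) = L_J(\lambda) \twoheadrightarrow \vla_J$ of $\lie{g}_J$-modules; since $(x_{\alpha_j}^-)^{\lambda(h_j)+1} v_\lambda = 0$ already holds in $M(\lambda,J)$ hence in $\vla$, the module $\vla_J$ is a nonzero quotient of the simple $L_J(\pi_J(\lambda))$, so they are isomorphic. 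Then $(2) \Rightarrow (3)$ is immediate since $J \subset J_\lambda$ forces $\pi_J(\lambda) \in P_J^+$ (Lemma \ref{Lfacts}(1)), and finite-dimensionality of $L_J$ of a dominant integral weight is standard. The implication $(3) \Rightarrow (4)$ is trivial because $\wt_J \vla = \wt \vla_J$ and a finite-dimensional module has finitely many weights. For $(4) \Rightarrow (5)$: $\wt_J \vla = \wt \vla_J$ where $\vla_J$ is a highest weight $\lie{g}_J$-module; a highest weight module over a semisimple Lie algebra with a \emph{finite} weight set must be finite-dimensional (since its weights are bounded above and the module injects into a direct sum of its weight spaces, each finite-dimensional), hence integrable over $\lie{g}_J$, and integrable modules have $W_J$-stable weight sets.

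The main obstacle — and the only implication requiring real argument — is $(5) \Rightarrow (1)$. The plan here: assume $\wt_J \vla$ is $W_J$-stable. First show $J \subset J_\lambda$: the weight $\lambda$ is the unique maximal element of $\wt_J \vla$ in the standard order restricted to $\lambda - \Z_+ \Delta_J$ (by Lemma \ref{Lweights}), and $W_J$-stability of $\wt_J \vla$ together with $\varpi_J$ being a $W_J$-equivariant bijection (Lemma \ref{Lfacts}(4)) forces $\pi_J(\lambda)$ to be the highest weight of a $W_J$-stable subset of $\pi_J(\lambda) - \Z_+ \Delta_J$; applying $s_j$ and tracking that $s_j \pi_J(\lambda) = \pi_J(\lambda) - \pi_J(\lambda)(h_j)\alpha_j$ must still lie in $\pi_J(\lambda) - \Z_+\Delta_J$ gives $\lambda(h_j) = \pi_J(\lambda)(h_j) \in \Z_+$ for each $j \in J$, i.e. $J \subset J_\lambda$. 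Now for the surjection $M(\lambda,J) \twoheadrightarrow \vla$: by the universal property of $M(\lambda,J)$ (it is generated by a highest weight vector with the sole extra relations $(x_{\alpha_j}^-)^{\lambda(h_j)+1} m_\lambda = 0$ for $j \in J$), it suffices to verify that $(x_{\alpha_j}^-)^{\lambda(h_j)+1} v_\lambda = 0$ in $\vla$ for all $j \in J$. This in turn reduces to the analogous statement inside $\vla_J$, which is now a highest weight $\lie{g}_J$-module of highest weight $\pi_J(\lambda) \in P_J^+$ whose weight set $\wt \vla_J = \wt_J \vla$ is $W_J$-stable; one deduces $\vla_J$ is finite-dimensional (a $W_J$-stable subset of $\pi_J(\lambda) - \Z_+\Delta_J$ bounded above is finite, since it is contained in the convex hull of $W_J(\pi_J(\lambda))$ intersected with the weight lattice coset), hence integrable, hence the Chevalley relation $(x_{\alpha_j}^-)^{\lambda(h_j)+1} v_\lambda = 0$ holds by the standard $\lie{sl}_2$-representation theory for each $j \in J$. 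Then the universal property delivers $M(\lambda,J) \twoheadrightarrow \vla$, completing the cycle. The delicate point to get exactly right is the equivalence "$W_J$-stable weight set for a highest weight module over a semisimple algebra $\Longleftrightarrow$ finite-dimensional," which I would isolate as the crux of $(5)\Rightarrow(1)$ and also the engine behind $(4)\Rightarrow(5)$.
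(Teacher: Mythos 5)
Your proof is correct and closes the cycle, but it routes around the two most economical steps in the paper's own argument, so let me flag the differences. The paper does not prove a simple cycle $(1)\Rightarrow\cdots\Rightarrow(5)\Rightarrow(1)$; instead it shows $(1)\Rightarrow(2)\Rightarrow(3)\Leftrightarrow(4)$, then separately $(3)\Rightarrow(2)\Rightarrow(1)$ and $(2)\Rightarrow(5)\Rightarrow(1)$. For $(3)\Rightarrow(2)$ the paper invokes BGG category $\mathcal{O}$: a finite-dimensional highest weight $\lie{g}_J$-module is automatically simple because its composition factors lie in a single linked block and at most one weight in a twisted $W_J$-orbit is dominant integral. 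You avoid this entirely, which is a gain in elementariness. More strikingly, for $(5)\Rightarrow(1)$ the paper has a two-line contradiction that you should internalize: if $(x^-_{\alpha_j})^{\lambda(h_j)+1}v_\lambda\neq 0$ then $\lambda-(\lambda(h_j)+1)\alpha_j\in\wt_J\vla$, and applying $s_j$ to it yields $\lambda+\alpha_j\in\wt_J\vla$, which is impossible since $\wt\vla\subset\lambda-\Z_+\Delta$. Your version instead re-derives finite-dimensionality of $\vla_J$ from $W_J$-stability via a convexity argument (dominant representatives of $W_J$-orbits in $\pi_J(\lambda)-\Z_+\Delta_J$ lie in $\conv_\R W_J(\pi_J(\lambda))$, hence finitely many lattice points) and then invokes $\lie{sl}_2$-theory. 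This is valid — the convexity fact is the standard ``finitely many dominant weights below a given one'' plus Theorem~\ref{Tklv}(2) for $\lie{g}_J$ — but it is noticeably heavier than the contradiction the paper spots. Net assessment: your $(4)\Rightarrow(5)$ (finite $\Rightarrow$ finite-dimensional $\Rightarrow$ integrable $\Rightarrow$ $W_J$-stable) is a clean addition not explicitly in the paper's chain; your $(5)\Rightarrow(1)$ is correct but a longer path to the same place, and you flag the right crux (``$W_J$-stable $\Leftrightarrow$ finite-dimensional'') without noticing that the relation $(x^-_{\alpha_j})^{\lambda(h_j)+1}v_\lambda=0$ can be extracted directly from $W_J$-stability of the single $\alpha_j$-string through $\lambda$, bypassing the global finiteness argument.
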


\noindent To show the result, use Equation \eqref{Esub}, Remark
\ref{Rfacts}, and Lemma \ref{Lfacts}, to prove the implications:
\[
(1) \implies (2) \implies (3) \implies (4) \implies (3) \implies (2)
\implies (1) \Longleftarrow (5) \Longleftarrow (2).
\]

\noindent As the arguments involved are mostly standard, we omit this
proof for brevity.

Using Proposition \ref{Pgvm}, it follows that every highest weight module
has a ``finite-dimensional top'':

\begin{proof}[Proof of Theorem \ref{T1}]
Given $\lambda$ and $\vla$, define $J(\vla)$ via Equation \eqref{Etop}.
We first show that the conditions in Proposition \ref{Pgvm} are all
equivalent to: $J \subset J(\vla)$. By definition, $M(\lambda,J(\vla))
\twoheadrightarrow \vla$, so for all $J \subset J(\vla)$, $M(\lambda,J)
\twoheadrightarrow \vla$. Hence $\wt_J \vla$ is finite by Proposition
\ref{Pgvm}. Conversely, by that same result, if $\wt_J \vla$ is finite
for any $J$, then $J \subset J_\lambda$ and $M(\lambda,J)
\twoheadrightarrow \vla$, so
$(x_{\alpha_j}^-)^{\lambda(h_j) + 1} v_\lambda = 0\ \forall j \in J$. Now
$J \subset J(\vla)$ as claimed.

For the equivalences, it remains to show that $\wt \vla$ is $W_J$-stable
if and only if $J \subset J(\vla)$. Let $\lie{p}$ denote the parabolic
Lie subalgebra $\lie{p}_{J(\vla)}$ as in Definition \ref{Dgvm}(2). Then
\cite[Lemma 9.3, Proposition 9.3, and Theorem 9.4]{H3} imply that
$M(\lambda,J(\vla)) \in \mathcal{O}^{\lie{p}}$, so $\vla \in \mathcal{O}$
lies in $\mathcal{O}^{\lie{p}}$ as well, and $\wt \vla$ is stable under
$W_{J(\vla)}$.
Now if $i \notin J(\vla)$, then since $\vla_{\lambda - n \alpha_i} = \C
\cdot (x^-_{\alpha_i})^n v_\lambda$ for all $n \geq 0$, it follows that
$s_i$ does not preserve the root string $\lambda - \Z_+ \alpha_i = (\wt
\vla) \cap (\lambda - \Z \alpha_i)$. Hence $s_i$ does not preserve $\wt
\vla$.

Finally, if $\vla = M(\lambda,J')$ for $J' \subset J_\lambda$ and $i
\notin J'$, then $\lambda - \Z_+ \alpha_i \subset \wt \vla$ by
\cite[Proposition 2.3]{KR}. By the above analysis, $J(\vla) \subset J'$.
Since $U(\lie{g}_{J'}) m_\lambda \subset \vla$ is finite-dimensional,
hence $J' = J(\vla)$.
Now recall that for all $i \in I$ and $n \geq 0$, the Kostant partition
function yields: $\dim M(\lambda)_{\lambda - n \alpha_i} = 1$. If $\vla =
L(\lambda)$ is simple and $j \in J_\lambda$, then
$(x^-_{\alpha_j})^{\lambda(h_j) + 1} m_\lambda \in M(\lambda)$ is a
maximal vector in $M(\lambda)$, whence $\wt_{\{ j \}} L(\lambda)$ is
finite if $j \in J_\lambda$. It is also easy to see by highest weight
$\lie{sl}_2$-theory that $\wt_{\{ j \}} L(\lambda) = \lambda - \Z_+
\alpha_j$ if $j \notin J_\lambda$. Hence $J(L(\lambda)) = J_\lambda$ from
above.
\end{proof}

\subsection{Characterizing finite weak faces}

We conclude this section by characterizing all finite weak faces of
highest weight modules $\vla$, of the form $\wt_J \vla$. To state the
result, we need some notation.

\begin{defn}\label{Dsupp}
Recall that the \textit{support} of a weight $\lambda \in \lie{h}^*$ is
$\supp(\lambda) := \{ i \in I : (\lambda, \alpha_i) \neq 0 \}$.
\begin{enumerate}
\item Given $J \subset I$, define $C(\lambda,J) \subset J$ to be the set
of nodes in the connected graph components of the Dynkin (sub)diagram of
$J \subset I$, which are not disjoint from $\supp(\lambda)$.

\item Given $X \subset \lie{h}^*$, define $\chi_X$ to be the indicator
function of $X$, i.e., $\chi_X(x) := 1_{x \in X}$.

\item Given a finite subset $X \subset \lie{h}^*$, define $\rho_X :=
\sum_{x \in X} x = \disp(\chi_X)$.
\end{enumerate}
\end{defn}

Chari et al showed in \cite{CDR} that $S \subset \wt \lie{g}$ is a weak
$\Z$-face of $\wt L(\theta)$ if and only if $S = (\wt \lie{g})(\rho_S)$.
Thus, a natural question is if similar ``intrinsic'' characterizations
exist for general highest weight modules. It turns out that finite weak
$\Z$-faces $S \subset \wt \vla$ are indeed characterized by $\rho_S =
\sum_{y \in S} y$ for all $\vla$. Additionally, they are also uniquely
determined by $\ell$ and $\disp$:

\begin{theorem}\label{T4}
Given $\lambda \in \lie{h}^*$ and $M(\lambda) \twoheadrightarrow \vla$,
fix $w \in W$ that preserves $\wt \vla$. Given $J \subset J(\vla)$ and a
finite subset $S \subset \wt \vla$, $S = w(\wt_J \vla)$ if and only if
$\ell(\chi_S) = \ell(\chi_{w(\wt_J \vla)})$ and $\disp(\chi_S) =
\disp(\chi_{w(\wt_J \vla)})$. Moreover, the following equality of
maximizer subsets holds:
\begin{equation}
\wt_J \vla = (\wt \vla)(\rho_{I \setminus J}) = (\wt_{J(\vla)}
\vla)(\pi_{J(\vla)} \rho_{\wt_J \vla}).
\end{equation}
\end{theorem}

In order to prove Theorem \ref{T4}, we collect together some results from
\cite{KR}.

\begin{theorem}[Khare and Ridenour, \cite{KR}]\label{Tkr1}
Fix $0 \neq \lambda \in P^+$, a subfield $\F \subset \R$, and a nonempty
proper subset $Y \subsetneq \wtvla{}$. Now define $\rho_Y := \sum_{y \in
Y} y$. The following are equivalent:
\begin{enumerate}
\item There exist $w \in W$ and $C(\lambda,I) \nsubseteq J$ such that $wY
= \wtvla{J}$.

\item $Y$ is a positive weak $\F$-face of $\wtvla{}$.

\item $Y$ is a weak $\F$-face of $\wtvla{}$.

\item $Y$ is the maximizer in $\wtvla{}$ of the functional $(\rho_Y, -)$,
with maximum value $(\rho_Y, \rho_Y) / |Y|$.

\item $Y$ is the maximizer in $\wtvla{}$ of a nonzero linear functional.
\end{enumerate}

\noindent Moreover, $\rho_{\wtvla{J}} \in P^+$ for all $J \subset I$.
\end{theorem}

\noindent More generally, one can consider (positive) weak $\bba$-faces
for any additive subgroup $0 \neq \bba \subset (\R,+)$. It is not hard to
show that these are also equivalent to the notions in Theorem \ref{Tkr1}:

\begin{cor}\label{Ckr1}
Setting as in Theorem \ref{Tkr1}. Also fix a subgroup $0 \neq \bba
\subset (\R, +)$. Then $Y \subsetneq \wtvla{}$ is a weak $\F$-face of
$\wtvla{}$ if and only if $Y \subsetneq \wtvla{}$ is a (positive) weak
$\bba$-face.
\end{cor}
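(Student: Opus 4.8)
The plan is to relate weak $\bba$-faces directly to the already-established chain of equivalences in Theorem \ref{Tkr1}, without re-proving anything about Weyl polytopes. The key observation is that condition (1) of Theorem \ref{Tkr1} — namely $wY = \wtvla{J}$ for some $w \in W$ and some $J$ not containing $C(\lambda,I)$ — is manifestly \emph{independent} of the coefficient set, so it suffices to show that ``$Y$ is a weak $\F$-face'' and ``$Y$ is a (positive) weak $\bba$-face'' are each equivalent to it. One direction is cheap: by Lemma \ref{L0}, a weak $\F$-face is the same as a weak $\Q$-face, so if $\bba \supseteq \Q$ (or more generally if $\F(\bba) \subseteq \F$) the implication is immediate; but in general $\bba$ need not contain $\Q$, so a bit more care is needed.

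First I would show that every set of the form $w(\wtvla{J})$ (with $C(\lambda,I) \nsubseteq J$, so that it is a proper subset) is a positive weak $\bba$-face of $\wtvla{}$ for \emph{any} $0 \neq \bba \subset (\R,+)$. By Lemma \ref{Lfield}(4), applying $w \in W \subset GL(\liehr^*)$ preserves the property of being a positive weak $\bba$-face, so it is enough to treat $\wtvla{J} \subset \wtvla{}$. Since $\lambda \in P^+$, we have $\lambda - \alpha_i \in \wt L(\lambda)$ for all $i \in I$, so Theorem \ref{Trigid} applies and tells us exactly when $\wt_J L(\lambda)$ is a positive weak $\bba$-face: since $\lambda \in P^+ \subset \Z\Delta \subseteq \bba\Delta$ precisely when $\lambda$ lies in the relevant lattice, we land in the second bullet of Theorem \ref{Trigid}, which requires some $j_0 \notin J$ with $(\lambda, \omega_{j_0}) > 0$ — and this is exactly the condition $C(\lambda, I) \nsubseteq J$ (equivalently $\supp(\lambda) \nsubseteq J$ up to connected components, hence some $j_0$ outside $J$ has $(\lambda,\omega_{j_0})>0$). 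In the degenerate case $\lambda \notin \bba\Delta$ the first bullet of Theorem \ref{Trigid} applies for all $J$. Either way, $\wt_J L(\lambda)$ — and hence $w(\wt_J L(\lambda))$ — is a positive weak $\bba$-face, hence (by Proposition \ref{Pkr}) a weak $\bba$-face. This gives (1) $\Rightarrow$ (positive weak $\bba$-face) $\Rightarrow$ (weak $\bba$-face).

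For the reverse direction I would argue that a weak $\bba$-face $Y \subsetneq \wtvla{}$ must have the form $w(\wt_J L(\lambda))$. A weak $\bba$-face is in particular $(\{2\},\{1,2\})$-closed in $\wtvla{}$, since $1,2 \in \bba_+$ up to rescaling — more precisely, by Lemma \ref{Lfield}(1) one may rescale $\bba$ by a positive $a \in \bba$ so that the relevant pair of coefficients lies in $a\bba$; the weak-face condition then forces closedness under the $(\{2a\},\{a,2a\})$ relation, which is Lemma \ref{L12}'s hypothesis after the same rescaling. Now Lemma \ref{L12} produces a vertex $w(\lambda) \in Y$, and after replacing $Y$ by $w^{-1}Y$ (allowed by $W$-stability of $\wtvla{}$ and Lemma \ref{Lfield}(4)) we may assume $\lambda \in Y$. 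Then Theorem \ref{Tface}, whose last hypothesis $\lambda - \alpha_i \in \wt L(\lambda)$ holds since $\lambda \in P^+$, gives $Y = \wt_J L(\lambda)$ for a unique $J \subset I$; properness of $Y$ forces $C(\lambda,I) \nsubseteq J$. This establishes (weak $\bba$-face) $\Rightarrow$ (1), closing the loop with Theorem \ref{Tkr1}.

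The main obstacle I anticipate is the bookkeeping around the coefficient rescaling: $\bba$ is only an additive subgroup, so $1$ and $2$ need not lie in $\bba$, and one must be careful that Lemma \ref{L12} and Theorem \ref{Tface} are being invoked with their hypotheses genuinely satisfied after rescaling by some $0 < a \in \bba$ via Lemma \ref{Lfield}(1)–(2). Once that is handled cleanly, the rest is a direct concatenation of Theorems \ref{Tkr1}, \ref{Trigid}, \ref{Tface}, Lemma \ref{L12}, and Proposition \ref{Pkr}, with no new geometric input required.
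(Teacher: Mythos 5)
Your forward direction (weak $\F$-face $\Rightarrow$ positive weak $\bba$-face) is serviceable, though more elaborate than necessary: the paper simply observes that Theorem \ref{Tkr1}(4) gives $Y = (\wtvla{})(\rho_Y)$ with $\rho_Y(Y) > 0$, so $Y$ is a positive weak $\bba$-face directly by Lemma \ref{Lfield}(5), with no need to route through Theorem \ref{Trigid} and its case split on whether $\lambda \in \bba\Delta$. (Incidentally, your parenthetical ``$\lambda \in P^+ \subset \Z\Delta$'' is false in general---fundamental weights need not lie in the root lattice---though you hedge enough that it doesn't break the argument.)

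The reverse direction, however, has a genuine gap. You invoke Theorem \ref{Tface} to pass from ``$Y$ contains $\lambda$ and is $(\{ 2 \}, \{ 1, 2 \})$-closed'' to ``$Y = \wt_J L(\lambda)$ for some $J$,'' and you assert that the required hypothesis ``$\lambda - \alpha_i \in \wt L(\lambda)$ for all $i \in I$'' holds ``since $\lambda \in P^+$.'' That is not correct. For $\lambda \in P^+$ with $\lambda(h_i) = 0$ for some $i$ (i.e.\ $\lambda$ dominant but not regular), one has $x^-_{\alpha_i} v_\lambda = 0$ in $L(\lambda)$, so $\lambda - \alpha_i \notin \wt L(\lambda)$, and Theorem \ref{Tface} does \emph{not} give the implication $(4) \Rightarrow (1)$. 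Theorem \ref{Tkr1} (and hence this corollary) is stated for all $0 \neq \lambda \in P^+$, not just regular ones, so your argument covers only a strict subset of the required cases. The paper sidesteps this entirely: given a weak $\bba$-face $Y$, pick $0 < a \in \bba$, so $a\Z \subset \bba$; Lemma \ref{Lfield}(1) (division by $a$) and Lemma \ref{L0} then make $Y$ a weak $\Z$-face and hence a weak $\Q$-face, and one simply reads off the conclusion from Theorem \ref{Tkr1} without ever touching Theorem \ref{Tface} or Lemma \ref{L12}. You should replace your $(\{ 2 \}, \{ 1, 2 \})$-closedness/Lemma \ref{L12}/Theorem \ref{Tface} chain with this rescaling argument.
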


\begin{proof}
If $Y$ is a weak $\F$-face, then by Theorem \ref{Tkr1}, $Y =
(\wtvla{})(\rho_Y)$ and $\rho_Y(Y) > 0$. Hence $Y$ is a positive weak
$\bba$-face of $\wtvla{}$ by Lemma \ref{Lfield}, hence a weak $\bba$-face
by Proposition \ref{Pkr}. Conversely, suppose $Y$ is a weak $\bba$-face
of $\wtvla{}$. Choosing $0 < a \in \bba$, it is easy to see by Lemmas
\ref{L0} and \ref{Lfield} that $Y \subset \wtvla{}$ is a weak $a\Z$-face,
hence a weak $\Z$-face and a weak $\mathbb{Q}$-face as well. Now by
Theorem \ref{Tkr1}, $Y = (\wtvla{})(\varphi)$ for some $\varphi$, and
hence also a weak $\F$-face of $\wtvla{}$.
\end{proof}

\noindent To prove Theorem \ref{T4}, we need one last proposition.

\begin{prop}\label{Pstable}
Fix $\lambda \in \lie{h}^*,\ M(\lambda) \twoheadrightarrow \vla$, and $J
\subset J(\vla)$.
\begin{enumerate}
\item Then $\rho_{\wt_J \vla}$ is $W_J$-invariant, and in $P^+_{J_\lambda
\setminus J} \times \C \Omega_{I \setminus J_\lambda}$.

\item Define $\rho_{I \setminus J} := \sum_{i \notin J} \omega_i$. Then
(notation as in Lemma \ref{L0} and Remark \ref{Rfacts}) for all $J'
\subset J_\lambda$:
\begin{equation}\label{Efinsub}
\wt_J \vla = (\wt \vla)(\rho_{I \setminus J}) = (\wt_{J(\vla)}
\vla)(\pi_{J(\vla)} \rho_{\wt_J \vla}) \subset (\wt \vla)(\pi_{J'}
\rho_{\wt_J \vla})
\end{equation}

\noindent and $0 \leq (\pi_{J'} \rho_{\wt_J \vla})(\wt_J \vla) \in \Z_+$.
\end{enumerate}
\end{prop}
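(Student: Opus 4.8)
\emph{Proof proposal.} The plan is to transport everything to the finite-dimensional simple $\lie{g}_K$-module $L_K(\pi_K(\lambda))$, where $K := J(\vla)$, and then to invoke Theorem~\ref{Tkr1} for $\lie{g}_K$. Since $J \subset J(\vla)$, Proposition~\ref{Pgvm} and Theorem~\ref{T1} give that $\wt_J \vla$ is finite and $W_J$-stable, that $\vla_J := U(\lie{g}_J) v_\lambda \cong L_J(\pi_J(\lambda))$, and—using Lemma~\ref{Lfacts}—that $\varpi_J$ maps $\wt_J \vla$ bijectively and $W_J$-equivariantly onto $\wt L_J(\pi_J(\lambda))$; the same holds with $K$ in place of $J$, and by Lemma~\ref{Lfacts}(2) one has $\wt_J \vla = \wt_J \vla_K$. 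For part~(1): $W_J$ permutes the finite set $\wt_J \vla$, so $\rho_{\wt_J \vla}$ is $W_J$-invariant, i.e.\ $\rho_{\wt_J \vla}(h_j) = 0$ for all $j \in J$. For $i \in J_\lambda \setminus J$, write each $\mu \in \wt_J \vla$ as $\lambda - \eta_\mu$ with $\eta_\mu \in \Z_+ \Delta_J$; as $i \notin J$, the off-diagonal Cartan integers give $\eta_\mu(h_i) \le 0$, so $\mu(h_i) = \lambda(h_i) - \eta_\mu(h_i) \in \Z_{\ge \lambda(h_i)} \subset \Z_+$, and summing over $\mu$ yields $\rho_{\wt_J \vla}(h_i) \in \Z_+$. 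Since the $\omega_i$-coordinate of any weight is its value on $h_i$, this shows $\rho_{\wt_J \vla} \in P^+_{J_\lambda \setminus J} \times \C\Omega_{I \setminus J_\lambda}$.

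For part~(2), the first equality $\wt_J \vla = (\wt \vla)(\rho_{I \setminus J})$ is Equation~\eqref{Esub} (a part of Theorem~\ref{Tface}). For the second, note $\varpi_K$ translates weights by the fixed vector $\pi_K(\lambda) - \lambda$, so it carries $(\wt_K \vla)(\varphi)$ bijectively onto $(\wt L_K(\pi_K(\lambda)))(\varphi)$ for every weight $\varphi$ (the maximizer condition involves only differences of weights, which lie in $\Z\Delta_K$ and so pair rationally with $\varphi$). Hence it suffices to prove, inside $L_K(\pi_K(\lambda))$, that $(\wt L_K(\pi_K(\lambda)))(\pi_K \rho_{\wt_J \vla}) = \wt_J L_K(\pi_K(\lambda))$. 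In the degenerate case where $\wt_J L_K(\pi_K(\lambda))$ exhausts $\wt L_K(\pi_K(\lambda))$—including $J = K$ and $\pi_K(\lambda) = 0$—part~(1) gives $\pi_K \rho_{\wt_J \vla} = 0$ and $\wt_J \vla = \wt_K \vla$, so both sides equal $\wt_K \vla$. Otherwise $\wt_J L_K(\pi_K(\lambda))$ is a proper nonempty subset, hence $C(\pi_K(\lambda), K) \nsubseteq J$, and the equivalence $(1)\Leftrightarrow(4)$ of Theorem~\ref{Tkr1} applied to $\lie{g}_K$ shows it is the maximizer of $(\rho^\ast, -)$, where $\rho^\ast := \sum_{\nu \in \wt_J L_K(\pi_K(\lambda))} \nu = \sum_{\mu \in \wt_J \vla} \varpi_K(\mu)$. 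Using $\varpi_K(\lambda - \eta_\mu) = \pi_K(\lambda) - \eta_\mu$ and $N := |\wt_J \vla|$ one gets $\rho^\ast = N \pi_K(\lambda) - \sum_\mu \eta_\mu$ while $\pi_K \rho_{\wt_J \vla} = N \pi_K(\lambda) - \pi_K\big(\sum_\mu \eta_\mu\big)$, so their difference lies in $\C\Omega_{I \setminus K}$, is orthogonal to every $\alpha_k$ with $k \in K$, and is therefore constant on $\wt L_K(\pi_K(\lambda)) \subset \pi_K(\lambda) - \Z_+ \Delta_K$. Thus $\rho^\ast$ and $\pi_K \rho_{\wt_J \vla}$ determine the same maximizer set, and pulling back through $\varpi_K$ gives $(\wt_K \vla)(\pi_K \rho_{\wt_J \vla}) = \wt_J \vla$, as required.

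For the remaining inclusion, fix $J' \subset J_\lambda$ and set $\varphi := \pi_{J'} \rho_{\wt_J \vla}$; by part~(1), $\varphi = \sum_{k \in J' \setminus J} d_k \omega_k$ with $d_k \in \Z_+$ (the $J$-coordinates of $\rho_{\wt_J \vla}$ vanish, and $J' \setminus J \subset J_\lambda \setminus J$). Given $\mu \in \wt_J \vla$ and $\nu = \lambda - \eta_\nu \in \wt \vla$ with $\eta_\nu \in \Z_+ \Delta$, using $(\omega_k, \alpha_j) = 0$ for $k \notin J$, $j \in J$, together with $\mu \in \lambda - \Z_+ \Delta_J$, one finds $(\varphi, \mu - \nu) = \sum_{k \in J' \setminus J} d_k (\omega_k, \eta_\nu) \ge 0$, with equality when $\nu \in \wt_J \vla$. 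Hence $\varphi$ is constant on $\wt_J \vla$ and $\wt_J \vla \subset (\wt \vla)(\varphi)$; its common value $\varphi(\wt_J \vla) = \sum_{k \in J'\setminus J} d_k(\omega_k, \lambda)$ lies in $\Z_+$ by the same elementary computation as in the proof of Theorem~\ref{Tface} (using $k \in J_\lambda$).

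The main obstacle is the second equality in part~(2): matching the functional $\pi_K \rho_{\wt_J \vla}$ demanded by the statement with the functional $\rho_{\wt_J L_K(\pi_K(\lambda))}$ produced by Theorem~\ref{Tkr1}—these differ by an element of $\C\Omega_{I \setminus K}$, whose constancy on the relevant weight set must be verified—while simultaneously checking that $\varpi_K$ faithfully transports the maximizer construction from $\lie{g}$ to $\lie{g}_K$ and disposing of the degenerate case in which Theorem~\ref{Tkr1}'s properness hypothesis would fail.
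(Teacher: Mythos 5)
Your proof is correct and follows essentially the same route as the paper's: both transport the problem through the bijection $\varpi_{J(\vla)}$ to the finite-dimensional simple $\lie{g}_{J(\vla)}$-module $L_{J(\vla)}(\pi_{J(\vla)}(\lambda))$, and then invoke the equivalence $(1) \Leftrightarrow (4)$ of Theorem~\ref{Tkr1} for $\lie{g}_{J(\vla)}$ to recognize the maximizer of $\rho_{\wt_J M}$; the only cosmetic differences are that you verify the two functionals $\pi_{J(\vla)}\rho_{\wt_J\vla}$ and $\rho_{\wt_J M}$ agree modulo $\C\Omega_{I\setminus J(\vla)}$ (which suffices, since that complement is orthogonal to $\Delta_{J(\vla)}$) rather than asserting outright equality, and that you explicitly set aside the degenerate case where $\wt_J M = \wt M$, which the paper leaves implicit. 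One small slip worth noting: in the degenerate case you write ``part~(1) gives $\pi_K \rho_{\wt_J \vla} = 0$,'' but part~(1) alone does not yield this — what you actually need is the observation that when $\wt_J\vla = \wt_{J(\vla)}\vla$ the sum $\rho_{\wt_J\vla}$ is $W_{J(\vla)}$-invariant and hence vanishes at every $h_k$, $k \in J(\vla)$; this fills the gap without changing your argument.
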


\noindent As a consequence of the first part, $(\rho_{\wt_J \vla},
\alpha_j) = 0$ for all $j \in J$, if $\wt_J \vla$ is finite.

\begin{proof}\hfill
\begin{enumerate}
\item By Proposition \ref{Pgvm}, $\wt_J \vla$ and hence $\rho_{\wt_J
\vla}$ is $W_J$-stable. Thus it is fixed by each $s_j$ for $j \in J$, so
$(\rho_{\wt_J \vla}, \alpha_j) = 0\ \forall j \in J$. Next,
$\lambda(h_j), -\alpha_{j'}(h_j) \in \Z_+$ for $j \in J_\lambda$ and $j'
\neq j$ in $I$. Hence for each $\mu \in \wt_J \vla \subset \lambda - \Z_+
\Delta_J$, $\mu(h_j) \in \Z_+$ if $j \in J_\lambda \setminus J$. Thus,
$\rho_{\wt_J \vla}(h_j) \in \Z_+$ as well, so $\rho_{\wt_J \vla} \in
P^+_{J_\lambda \setminus J} \times \C \Omega_{I \setminus J_\lambda}$.

\item The first equality is from Theorem \ref{Tface}.
Now given $J' \subset J_\lambda$, $\pi_{J'} \rho_{\wt_J \vla} \in P^+_{J'
\setminus J} \subset P^+_{J_\lambda}$ by the previous part. Hence by
definition of $J_\lambda$, $(\pi_{J'} \rho_{\wt_J \vla}, \lambda) \in
\Z_+$, and by the previous sentence, $(\pi_{J'} \rho_{\wt_J \vla},
\alpha_j) = 0\ \forall j \in J$. Thus the linear functional $(\pi_{J'}
\rho_{\wt_J \vla}, -)$ is constant on $\wt_J \vla$, and the value is in
$\Z_+$.
Moreover, given any $\alpha \in \Delta$, $(\pi_{J'} \rho_{\wt_J \vla},
\alpha) \in \Z_+$, so the linear functional can never attain strictly
larger values than at $\lambda$. This proves the inclusion.

Now $\pi_{J'} \rho_{\wt_J \vla} \in P^+_{J_\lambda} = \Z_+
\Omega_{J_\lambda}$, so $(\pi_{J'} \rho_{\wt_J \vla}, \lambda) \in \Z_+$
by the definition of $J_\lambda$. The inclusion now implies the
inequality.
To show the second equality, note by Proposition \ref{Pgvm} that
$\vla_{J(\vla)} \cong M := L_{J(\vla)}(\pi_{J(\vla)}(\lambda))$ as
$\lie{g}_{J(\vla)}$-modules. Thus $M$ is a finite-dimensional simple
$\lie{g}_{J(\vla)}$-module, and the bijection $\varpi_{J(\vla)} :
\wt_{J(\vla)} \vla \to \wt M$ (from Proposition \ref{Pgvm}) sends
$\lambda - \nu \in \wt_{J(\vla)} \vla$ to $\pi_{J(\vla)}(\lambda) - \nu
\in \wt M$. Moreover, for all $j \in J(\vla)$, the two weights agree at
$h_j$. Now for all $j \in J(\vla)$, Remark \ref{Rfacts} implies that
\begin{equation}\label{Emax}
\pi_{J(\vla)}(\rho_{\wt_J \vla})(h_j) = \rho_{\wt_J \vla}(h_j) =
\sum_{\mu \in \wt_J \vla} \mu(h_j) = \sum_{\mu \in \wt_J \vla}
\varpi_{J(\vla)}(\mu)(h_j) = \rho_{\wt_J M}(h_j).
\end{equation}

\noindent Hence $\pi_{J(\vla)}(\rho_{\wt_J \vla}) = \rho_{\wt_J M}$ as
elements of $P^+_{J(\vla) \setminus J} \subset P^+_{J(\vla)}$.
Now the inclusion shown earlier in this part, for $J' = J(\vla)$, proves
that $\wt_J \vla \subset T := (\wt_{J(\vla)} \vla)(\pi_{J(\vla)}
\rho_{\wt_J \vla})$. Conversely, suppose  $\lambda - \nu \in T$, with
$\nu \in \Z_+ \Delta_{J(\vla)}$. Then $(\pi_{J(\vla)} \rho_{\wt_J \vla},
\nu) = 0$ since $\lambda \in T$, so $(\rho_{\wt_J M}, \nu) = 0$ by
Equation \eqref{Emax}.
Moreover, $\pi_{J(\vla)}(\lambda) - \nu \in \wt M$ (via the bijection
$\varpi_{J(\vla)}$). Therefore $\pi_{J(\vla)}(\lambda) - \nu \in (\wt
M)(\rho_{\wt_J M}) = \wt_J M$ (by Theorem \ref{Tkr1} for
$\lie{g}_{J(\vla)}$). This implies that $\nu \in \Z_+ \Delta_J$, whence
$\lambda - \nu \in \wt_J \vla$ as required.
\end{enumerate}
\end{proof}

It is now possible to characterize the weak $\bba$-faces of $\wt
\vla$ that are finite sets.

\begin{proof}[Proof of Theorem \ref{T4}]
The last equation was shown in Proposition \ref{Pstable} and Theorem
\ref{Tface} (this latter holds for all $J \subset I$).
For the first equivalence, one implication is obvious. For the converse,
$\lambda - \mu \in \Z_+ \Delta\ \forall \mu \in \wt \vla$, whence
$(\rho_{I \setminus J}, \lambda - \mu) \geq 0$. Equality is attained if
and only if $\lambda - \mu \in \Z_+ \Delta_J$ (i.e., $\mu \in \wt_J
\vla$). Thus given any finite subset $S \subset \wt \vla$, compute using
the assumptions:
\begin{eqnarray*}
0 & \leq & \sum_{\mu \in S} (\rho_{I \setminus J}, \lambda - w^{-1}(\mu))
= \left( \rho_{I \setminus J}, \sum_{\mu \in  S} (\lambda - w^{-1}(\mu))
\right) = (\rho_{I \setminus J}, \ell(\chi_S) \lambda -
w^{-1}(\disp(\chi_S)))\\
& = & (\rho_{I \setminus J}, \ell(\chi_{w(\wt_J \vla)}) \lambda -
w^{-1}(\disp(\chi_{w(\wt_J \vla)}))) = (\rho_{I \setminus J},
\ell(\chi_{\wt_J \vla}) \lambda - \disp(\chi_{\wt_J \vla}))\\
& = & \sum_{\mu \in \wt_J \vla} (\rho_{I \setminus J}, \lambda - \mu) =
0.
\end{eqnarray*}

\noindent Thus, the inequality is actually an equality, which means that
$w^{-1}(S) \subset \wt_J \vla$ by the above analysis. Since $|w^{-1}(S)|
= \ell(\chi_S) = \ell(\chi_{\wt_J \vla}) = |\wt_J \vla|$, hence
$w^{-1}(S) = \wt_J \vla$.
\end{proof}

\section{Application 1: Weights of simple highest weight
modules}\label{Sappl1}

In this section, we use the above results and techniques to compute the
support of various highest weight modules. We then discuss the more
involved question of computing the weight multiplicities in $L(\lambda)$;
see Theorem \ref{Twcf}.

\begin{proof}[Proof of Theorem \ref{Twtgvm}]
Note that if $\vla = M(\lambda,J')$, then the first and third expressions
in Equation \eqref{Ewtgvm} are equal by \cite[\S 9.4]{H3}. We now show a
cyclic chain of inclusions:
\[
\wt M(\lambda,J') \subset (\lambda - \Z \Delta) \cap \conv_\R \wt
M(\lambda, J') \subset \bigsqcup_{\mu \in \Z_+ \Delta_{I \setminus J'}}
\wt L_{J'}(\lambda - \mu) \subset \wt M(\lambda,J').
\]

The first inclusion is obvious since $\wt M(\lambda,J')$ is contained in
each factor. Also note that the last expression in Equation
\eqref{Ewtgvm} is indeed a disjoint union since $\Delta$ is a basis of
$\lie{h}^*$. Now to show the third inclusion, first note that $\lambda -
\mu \in P^+_{J'} \cap \wt M(\lambda,J')$ for all $\mu \in \Z_+ \Delta_{I
\setminus J'}$. Moreover, if $0 \neq m_{\lambda - \mu} \in
M(\lambda,J')_{\lambda - \mu}$, then it is easy to verify that
$m_{\lambda - \mu}$ is killed by all $x^+_{\alpha_j}$ for $j \in J'$.
Hence
\[
\wt L_{J'}(\lambda - \mu) = \wt U(\lie{g}_{J'}) m_{\lambda - \mu} \subset
\wt M(\lambda,J') \quad \forall \mu \in \Z_+ \Delta_{I \setminus J'},
\]

\noindent and the third inclusion follows.
Next, we show the second inclusion. Since $\conv_\R \wt M(\lambda,J')$
$\subset \conv_\R \wt M(\lambda) = \lambda - \R_+ \Delta$, it suffices to
show that
\begin{equation}\label{Eincl}
(\lambda - \Z \Delta) \cap \conv_\R \wt M(\lambda, J') = (\lambda - \Z_+
\Delta) \cap \conv_\R \wt M(\lambda,J') \subset \bigsqcup_{\mu \in \Z_+
\Delta_{I \setminus J'}} \wt L_{J'}(\lambda - \mu).
\end{equation}

Now suppose $\lambda - \nu$ is in (the intersection on) the left-hand
side of Equation \eqref{Eincl}, where $\nu = \sum_{i \in I} n_i \alpha_i
\in \Z_+ \Delta$. Since both sides of Equation \eqref{Eincl} are
$W_{J'}$-stable, there exists $w \in W_{J'}$ such that $w(\lambda - \nu)
\in P^+_{J'} \times \R \Omega_{I \setminus J'}$. Now set $\mu := \sum_{i
\notin J'} n_i \alpha_i$; then by the $W_{J'}$-invariance of the
left side,
\[
w(\lambda - \nu) \in (\lambda - \mu) - Q^+_{J'} = (\lambda - \mu) - \Z_+
\Delta_{J'},
\]

\noindent and both of these are weights in $P^+_{J'} \times \R \Omega_{I
\setminus J'}$. Hence by Theorem \ref{Tklv},
\[
\conv_\R W_{J'}(\lambda - \nu) \subset \conv_\R W_{J'}(\lambda - \mu) =
\conv_\R \wt L_{J'}(\lambda - \mu).
\]

\noindent Consequently, using Theorem \ref{Tklv},
\[
w(\lambda - \nu) \in (\lambda - \mu - Q^+_{J'}) \cap \conv_\R
W_{J'}(\lambda - \nu) \subset (\lambda - \mu - Q^+_{J'}) \cap \conv_\R
\wt L_{J'}(\lambda - \mu) = \wt L_{J'}(\lambda - \mu).
\]

\noindent Thus $\lambda - \nu \in \wt L_{J'}(\lambda - \mu)$, which shows
Equation \eqref{Eincl}. Equation \eqref{Ewtgvm} now follows for
$M(\lambda,J')$.\medskip

Next, given a general highest weight module $\vla$, Theorem \ref{T1} and
Proposition \ref{Pgvm} show that $M(\lambda, J(\vla)) \twoheadrightarrow
\vla$, whence $\wt \vla \subset \wt M(\lambda, J(\vla))$. Now
\textbf{claim} that $\wt \vla = \wt M(\lambda, J(\vla))$. To see this,
note that $\vla$ is $\lie{g}_{J(\vla)}$-integrable by results in
\cite{H3} (as discussed in the proof of Theorem \ref{T1}). Hence by
Equation \eqref{Ewtgvm} for $M(\lambda, J(\vla))$ (and the proof of the
third inclusion above), it suffices to show that $\lambda - \Z_+
\Delta_{I \setminus J(\vla)} \subset \wt \vla$ if $|J_\lambda \setminus
J(\vla)| \leq 1$.
Thus, suppose $J_\lambda \setminus J(\vla) \subset \{ i_0 \}$ for some
$i_0 \in I$. We now obtain a contradiction by assuming that there exists
$\mu = \sum_{i \notin J(\vla)} n_i \alpha_i \in \Z_+ \Delta_{I \setminus
J(\vla)}$ such that $\lambda - \mu \notin \wt \vla$. Indeed, choose such
a weight $\mu$ of minimal height $\sum_{i \notin J(\vla)} n_i$. Then
$\vla_{\lambda - \mu} = 0$, so if $v_\lambda$ spans $\vla_\lambda$, then
the following vector is zero in $\vla$:
\[
v_{\lambda - \mu} := (x^-_{\alpha_{i_0}})^{{\bf 1}(i_0 \notin J(\vla))
\cdot n_{i_0}} \cdot \prod_{i \notin J(\vla) \cup \{ i_0 \}}
(x^-_{\alpha_i})^{n_i} \cdot v_\lambda
\]

\noindent under some enumeration of $I \setminus (J(\vla) \cup \{ i_0 \})
= \{ i_1, \dots, i_m \}$. But then applying powers of
$x^+_{\alpha_{i_j}}$ for $1 \leq j \leq m$ still yields zero. Now compute
inductively, using $\lie{sl}_2$-theory and the Serre relations:
\begin{align*}
0 = \prod_{j=1}^m (x^+_{\alpha_{i_j}})^{n_{i_j}} \cdot v_{\lambda - \mu}
= &\ (x^-_{\alpha_{i_0}})^{{\bf 1}(i_0 \notin J(\vla)) \cdot n_{i_0}}
\cdot \prod_{j=1}^m n_{i_j}! \prod_{k=1}^{n_{i_j}} (\lambda(h_{i_j}) - k
+ 1) \cdot v_\lambda\\
\in &\ \C^\times (x^-_{\alpha_{i_0}})^{{\bf 1}(i_0 \notin J(\vla)) \cdot
n_{i_0}} \cdot v_\lambda.
\end{align*}

However, if $i_0 \notin J(\vla)$, then (using the Kostant partition
function,) $\lambda - \Z_+ \alpha_{i_0} \subset \wt \vla$.
This yields a contradiction, so no such $\mu$ exists and the claim is
proved. Equation \eqref{Ewtgvm} now follows easily for $\vla$.
\end{proof}

Given Theorem \ref{Twtgvm}, it is natural to ask if Equation
\eqref{Ewtgvm} holds more generally for other highest weight modules
$\vla$. We now show that this is false.

\begin{theorem}\label{Twtsimple}
Fix $\lambda \in \lie{h}^*$, $M(\lambda) \twoheadrightarrow \vla$, and
$J' \subset J_\lambda$. If $|J_\lambda \setminus J'| \leq 1$, then
\begin{equation}\label{Ewtsimple}
J(\vla) = J' \quad \implies \quad \wt \vla = (\lambda - \Z \Delta) \cap
\conv_\R \wt \vla.
\end{equation}

\noindent However, Equation \eqref{Ewtsimple} need not always hold if
$|J_\lambda \setminus J'| = 2$; and if $|J_\lambda \setminus J'| \geq 3$,
then Equation \eqref{Ewtsimple} always fails to hold for some $\vla$ with
$J(\vla) = J'$.
\end{theorem}

\begin{remark}
Thus, the formula for $\wt \vla$ may not always be as ``clean'' as the
formula for its convex hull. For instance, if $\lambda$ is
simply-regular, then the convex hull of $\wt \vla$ was computed in
Theorem \ref{T2} (and depends only on $J(\vla)$). However, the set $\wt
\vla$ need not satisfy Equation \eqref{Ewtsimple}. Thus by Equation
\eqref{Ewtgvm}, $\wt \vla$ need not always equal $\bigsqcup_{\mu \in \Z_+
\Delta_{I \setminus J(\vla)}} \wt L_{J(\vla)}(\lambda - \mu)$.

Moreover, an obvious consequence of Theorem \ref{Twtsimple} is that the
convex hull $\conv_\R \wt \vla$ does not uniquely determine the module
$\vla$ (or even its set of weights).
\end{remark}

\begin{proof}[Proof of Theorem \ref{Twtsimple}]
Equation \eqref{Ewtsimple} follows from Theorem \ref{Twtgvm} when
$|J_\lambda \setminus J'| \leq 1$. We now claim that if $s_i s_j = s_j
s_i \in W$ for some simple reflections corresponding to $i \neq j \in
J_\lambda \setminus J'$, then Equation \eqref{Ewtsimple} fails for some
$\vla$ with $J(\vla) = J'$. This shows the remaining assertions in the
theorem, since no Dynkin diagram of finite type contains a 3-cycle (with
possible multi-edges).

To show the claim, note that $\lie{g}_{\{ i,j \}}$ is of type $A_1 \times
A_1$. Hence the vector
\[
v := (x^-_{\alpha_i})^{\lambda(h_i) + 1} (x^-_{\alpha_j})^{\lambda(h_j) +
1} m_\lambda \in M(\lambda)
\]

\noindent is maximal by $\lie{sl}_2$-theory and the Serre relations.
Moreover, $v$ has weight $s_i s_j \bullet \lambda = s_i s_j(\lambda +
\rho) - \rho$. Now note by the Kostant partition function for $A_1 \times
A_1$ that
\[
\dim M(\lambda)_\mu = \dim M_{\{ i,j \}}(\lambda)_\mu = 1, \qquad \forall
\mu \in \lambda - \Z_+ \Delta_{\{ i,j \}}.
\]

\noindent Hence $s_i s_j \bullet \lambda \notin \wt \vla$, where $\vla =
M(\lambda) / U(\lie{g}) v$. On the other hand, it is clear by inspection
that $\lambda - \Z_+ \alpha_k \subset \wt (M(\lambda) / U(\lie{g}) v)$
for all $k \in I$, so Equation \eqref{Ewtsimple} fails to hold for
$\vla$.
\end{proof}

\subsection{Weyl Character Formula and simple modules}

Note by Theorem \ref{T2} that $\conv_\R \wt L(\lambda)$ $= \conv_\R \wt
M(\lambda, J_\lambda)$ for all $\lambda \in \lie{h}^*$. A stronger result
was Theorem \ref{Twtgvm}, which showed that $\wt L(\lambda) = \wt
M(\lambda, J_\lambda)$ for all $\lambda$. The even stronger assertion --
namely, whether or not $M(\lambda, J_\lambda)$ is simple -- has also
been studied in detail by Wallach \cite{Wa}, Conze-Berline and Duflo
\cite{CD}, and resolved by Jantzen in \cite{Ja}. See \cite[\S 9.12,
9.13]{H3} for more details. The approach in \cite{H3} starts with a
parabolic subgroup of $W$ and then works with suitable highest weights
$\lambda$, while in this paper the approach is reversed, to start with a
highest weight $\lambda$. Thus for completeness, we quickly discuss a
sufficient condition which is slightly different from the one in
\cite{H3}. In particular, the following result yields weight
multiplicities of a large class of simple highest weight modules.

\begin{theorem}[Weyl Character Formula]\label{Twcf}
Suppose the set $S_\lambda := \{ w \in W : w \bullet \lambda \leq \lambda
\}$ equals $W_{J_\lambda}$. Then $L(\lambda)$ is the unique quotient of
$M(\lambda)$ whose set of weights is $W_{J_\lambda}$-invariant, whence
\begin{equation}\label{Ewcf}
\ch L(\lambda) = \ch M(\lambda, J_\lambda) = \frac{\sum_{w \in
W_{J_\lambda}} (-1)^{\ell(w)} e^{w(\lambda + \rho_I)}}{\sum_{w \in W}
(-1)^{\ell(w)} e^{w(\rho_I)}}.
\end{equation}
\end{theorem}

\noindent Note that this result unifies the cases of dominant integral
and antidominant $\lambda$ (where $S_\lambda = W$ and $J_\lambda = I$, or
$S_\lambda = \{ 1 \}$ and $J_\lambda = \emptyset$ respectively). Equation
\eqref{Ewcf} thus generalizes the usual Weyl character formula for
finite-dimensional simple $\lie{g}$-modules (see also the influential
work \cite{Ja}).

\begin{proof}
If $\wt \vla$ is $W_{J_\lambda}$-invariant, then $\wt_{\{ j \}} \vla$ is
$s_j$-invariant for all $j \in J_\lambda$. Thus
$(x^-_{\alpha_j})^{\lambda(h_j) + 1} v_\lambda = 0$ for all $j \in
J_\lambda$ (where $v_\lambda$ spans $\vla_\lambda$), whence $M(\lambda,
J_\lambda) \twoheadrightarrow \vla$. Now let $\mathcal{O}(\lambda)$
denote the block of the BGG Category $\mathcal{O}$ corresponding to
$\lambda$; in other words, $\mathcal{O}(\lambda)$ is the full subcategory
of all finite length $\lie{h}$-semisimple $\lie{g}$-modules, each of
whose Jordan-Holder factors is $L(w \bullet \lambda)$ for some $w \in W$
(in other words, all simple subquotients have the same central character
as $L(\lambda)$).
Recall that the sets $\{ [L(w \bullet \lambda)] : w \in W \}$ and $\{
[M(w \bullet \lambda)] : w \in W \}$ are $\Z$-bases of the Grothendieck
group of the block $\mathcal{O}(\lambda)$, with unipotent (triangular)
change-of-basis matrices with respect to the usual partial order on
$\lie{h}^*$. Thus, $\ch \vla$ is a $\Z$-linear combination of $\ch
M(\mu)$, with $\mu \in S_\lambda$.

Now proceed as in the proof of the Weyl character formula: if $q :=
\prod_{\alpha \in \Phi^+} (e^{\alpha/2} - e^{-\alpha/2})$ is the usual
Weyl denominator, then using that $\dim \vla_\lambda = 1$, we get:
\[
q * \ch \vla = \sum_{w \in W_{J_\lambda}} c_w q * \ch M(w \bullet
\lambda) = \sum_{w \in W_{J_\lambda}} c_w e^{w(\lambda + \rho_I)}, \qquad
c_1 = 1.
\]

\noindent Now the left side is $W_{J_\lambda}$-alternating, whence so is
the right side. This shows that $c_w = (-1)^{\ell(w)}$, and therefore
that $\ch \vla$ is independent of $\vla$ itself. Since $M(\lambda,
J_\lambda) \twoheadrightarrow \vla \twoheadrightarrow L(\lambda)$ all
have $W_{J_\lambda}$-invariant characters, they must all be equal.
Equation \eqref{Ewcf} now follows from the well-known expansion of the
Weyl denominator.
\end{proof}

\section{Extending the Weyl polytope to (pure) highest weight
modules}\label{Sfer}

We now prove Theorems \ref{T2} and \ref{T3}. The first step is to
identify the ``edges'' of the polyhedron $\conv_\R \wt M(\lambda,
J(\vla))$ for simply-regular $\lambda$. We carry this out in greater
generality.

\begin{theorem}\label{Tgvmedges}
Fix $\lambda \in \lie{h^*}$ and $J' \subset J_\lambda$. If $\lambda(h_j)
\neq 0 \ \forall j \in J'$, then $\conv_\R \wt M(\lambda,J')$ is
$W_{J'}$-invariant, and has extremal rays $\{ \lambda - \R_+ \alpha_i \ :
\ i \notin J' \}$ at the vertex $\lambda$.
\end{theorem}

\noindent In particular, the result holds if $\lambda$ is simply-regular.

\begin{proof}
The proof is in steps. The result is trivial for $J' = I$ by standard
results (see e.g.~\cite{H3}), since in this case $\lambda \in P^+$ and
$M(\lambda,I) = L(\lambda)$. Now note by \cite[Proposition 2.4]{KR} that
\[
\conv_\R \wt M(\lambda,J') = \conv_\R \wt_{J'} M(\lambda,J') - \R_+
(\Phi^+ \setminus \Phi_{J'}^+).
\]

\noindent Hence the extremal rays (i.e., unbounded edges) through
$\lambda$ are contained in $\{ \lambda - \R_+ \mu : \mu \in \R_+ (\Phi^+
\setminus \Phi_{J'}^+) \}$. (Note that every extremal ray passes through
a vertex.) The next step is to reduce this set of candidates to $\{
\lambda - \R_+ \mu : \mu \in \Phi^+ \setminus \Phi_{J'}^+ \}$. But this
is clear: if $\mu = \sum_{\alpha \in \Phi^+ \setminus \Phi_{J'}^+}
r_\alpha \alpha$ with $r_\alpha \geq 0$, and $r \in \R_+$, then using
that $J' \neq I$,
\[
\lambda - r \mu = \lambda - \sum_{\alpha \in \Phi^+ \setminus
\Phi_{J'}^+} r r_\alpha \alpha = \frac{1}{|\Phi^+ \setminus \Phi_{J'}^+|}
\sum_{\alpha \in \Phi^+ \setminus \Phi_{J'}^+} \left( \lambda - r
r_\alpha |\Phi^+ \setminus \Phi^+_{J'}| \cdot \alpha \right).
\]

\noindent Now use this principle again: namely, that extremal rays in a
polyhedron are weak $\R$-faces, so no point on such a ray lies in the
convex hull of points not all on the ray.
Thus, we show that the set of extremal rays through $\lambda$ is $\{
\lambda - \R_+ \alpha_i \ : \ i \notin J' \}$. None of these rays
$\lambda - \R_+ \alpha_i$ is in the convex hull of $\{ \lambda - \R_+
\alpha_{i'} \ : \ i' \in I \setminus \{ i \} \}$. Hence it suffices to
show that for all $\mu \in \Phi^+ \setminus (\Delta \cup \Phi_{J'}^+)$
and $r > 0$, the vector $\lambda - r \mu$ is in the convex hull of points
in $\conv_\R \wt M(\lambda,J')$ that are not all in $\lambda - \R_+ \mu$.
Suppose $\mu \in \Phi^+ \setminus \Phi_{J'}^+$ is of the form
\[
\mu = \sum_{j \in J'} c_j \alpha_j + \sum_{s=1}^k d_s \alpha_{i_s},
\]

\noindent where $c_j, 0 < d_s \in \Z_+$ for some $k > 0$, and $i_s \notin
J'$ for all $s$. Recall the assumption on $\lambda$, which implies that
for all $j \in J'$, $s_j(\lambda) = \lambda - n_j \alpha_j$ for some $n_j
> 0$. Finally, to study $\lambda - r \mu$, define the function $f \in
\Fin(\conv_\R \wt M(\lambda,J'), \R_+)$ via:
\[
D := 1 + r \sum_{j \in J'} \frac{c_j}{n_j}, \qquad f(\lambda - k r d_s
\alpha_{i_s}) := \frac{1}{kD}, \qquad f(\lambda - n_{j_0} \alpha_{j_0} -
r \mu) := \frac{r c_{j_0}}{D n_{j_0}}
\]

\noindent for all $1 \leq s \leq k$ and $j_0 \in J'$, and $f$ is zero
otherwise. (If $r \notin \Z_+$, this can be suitably modified to replace
each point in $\supp(f)$ by its two ``neighboring'' points in the
corresponding weight string through $\lambda$, such that the new function
is supported only on $\wt M(\lambda,J')$.)
Note that $\lambda - \R \mu$ does not intersect $\lambda - n_{j_0}
\alpha_{j_0} - \R \mu$. Straightforward computations now show that
$\ell(f) = 1$ and $\disp(f) = \lambda - r \mu$, so $\lambda - r \mu \in
\conv_\R(\supp(f))$. Now if $\mu \neq \alpha_i$ for some $i \notin J'$,
then either some $c_j > 0$ or $k>1$. But then $\supp(f)$ is not contained
in $\lambda - \R_+ \mu$, so it cannot be an extremal ray.
\end{proof}

\subsection{Connections to Fernando's results and convex hulls of pure
modules}

We next discuss connections between our results and the work of Fernando
\cite{Fe}, where he initiated the classification of irreducible
$\lie{h}$-weight $\lie{g}$-modules with finite weight multiplicities.
(This classification was completed by Mathieu in \cite{Ma}; in his
terminology, the simple highest weight modules $L(\lambda) =
L_{\lie{b}}(\lambda)$ are ``parabolically induced''.)

The following result shows that for every highest weight module $\vla$,
the subset $J(\vla)$ is uniquely determined in the spirit of \cite{Fe} as
follows. To state it, we need the following notation from \cite{Fe}.

\begin{defn}
$P \subset \Phi$ is {\em closed} if $\alpha + \beta \in P$ whenever
$\alpha, \beta \in P$ and $\alpha + \beta \in \Phi$.
Next, given a $\lie{g}$-module $M$, define $\lie{g}[M] := \{
X \in \lie{g} \ : \  \C[X] \cdot m \subset U(\lie{g})m \subset
M \mbox{ is finite-dimensional for all } m \in M \}$.
\end{defn}

\begin{prop}\label{Phwfer}
Given $\lambda \in \lie{h}^*$ and $M(\lambda) \twoheadrightarrow \vla$,
$\lie{g}[\vla]$ equals the parabolic subalgebra $\lie{p}_{J(\vla)}$.
Thus, one recovers $J(\vla)$ from $\vla$ via:
\begin{equation}\label{Efer}
J(\vla) \leftrightarrow \Delta_{J(\vla)} = (- \wt \lie{g}[\vla]) \cap
\Delta.
\end{equation}
\end{prop}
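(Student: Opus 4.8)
The plan is to identify $\lie{g}[\vla]$ with $\lie{p}_{J(\vla)}$ by a double inclusion, exploiting the weight-space structure of a highest weight module together with Theorem \ref{T1} and Proposition \ref{Pgvm}. First I would observe that since $\vla$ is an $\lie{h}$-weight module, $\lie{g}[\vla]$ is an $\lie{h}$-stable (in fact $\lie{p}_{J(\vla)}$-stable, once we know it contains that parabolic) Lie subalgebra of $\lie{g}$, and hence it is determined by $\lie{h}$ together with the set of root spaces $\lie{g}_\alpha$ it contains; so the whole statement reduces to proving that $\lie{g}_\alpha \subset \lie{g}[\vla]$ if and only if $\alpha \in \Phi^+ \cup \Phi_{J(\vla)}$, which is exactly Equation \eqref{Efer} restricted to simple roots once one checks $\lie{g}[\vla]$ is a (standard) parabolic.

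For the inclusion $\lie{p}_{J(\vla)} \subseteq \lie{g}[\vla]$: clearly $\lie{h} + \lie{n}^+ \subseteq \lie{g}[\vla]$, because for a weight vector $m$ of weight $\mu$ and any $X \in \lie{g}_\alpha$ with $\alpha \in \Phi^+ \cup \{0\}$, the elements $X^k m$ have weights $\mu + k\alpha$ all lying in the finite set $\wt \vla \cap (\mu + \Z_+ \Delta)$ bounded above by $\lambda$ — actually one must be slightly careful: $\C[X]m$ finite-dimensional needs a genuine argument, but for $X \in \lie{n}^+$ it follows since $\wt \vla \subseteq \lambda - \Z_+\Delta$ forces $X^k m = 0$ for $k \gg 0$; for $X \in \lie{h}$ it is immediate as $m$ is a weight vector. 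For $\alpha \in \Phi_{J(\vla)}^-$, i.e.\ the negative root spaces inside the Levi, I would invoke Proposition \ref{Pgvm}: $\vla$ is $\lie{g}_{J(\vla)}$-integrable (equivalently, $M(\lambda, J(\vla)) \twoheadrightarrow \vla$ and $\wt \vla$ is $W_{J(\vla)}$-stable by Theorem \ref{T1}), so every $m \in \vla$ lies in a finite-dimensional $\lie{g}_{J(\vla)}$-submodule — one builds this from the fact that $m$ is a finite sum of weight vectors, each generating under $U(\lie{g}_{J(\vla)})$ a module whose weights lie in the $W_{J(\vla)}$-saturation of a finite set, hence finite — and therefore $\C[X]m$ is finite-dimensional for all $X \in \lie{g}_{J(\vla)}$. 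This gives $\lie{p}_{J(\vla)} = \lie{g}_{J(\vla)} + \lie{h} + \lie{n}^+ \subseteq \lie{g}[\vla]$.

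For the reverse inclusion $\lie{g}[\vla] \subseteq \lie{p}_{J(\vla)}$: since $\lie{g}[\vla]$ is $\lie{h}$-stable it is a sum of $\lie{h}$ and certain root spaces, and it contains $\lie{p}_{J(\vla)}$, so it suffices to show that for $i \notin J(\vla)$ the negative simple root space $\C x_{\alpha_i}^-$ is \emph{not} in $\lie{g}[\vla]$. But by Theorem \ref{T1} (and its proof), when $i \notin J(\vla)$ we have $\vla_{\lambda - n\alpha_i} = \C \cdot (x_{\alpha_i}^-)^n v_\lambda \neq 0$ for all $n \geq 0$, so $\C[x_{\alpha_i}^-] v_\lambda$ is infinite-dimensional, whence $x_{\alpha_i}^- \notin \lie{g}[\vla]$. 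A short argument then upgrades this from simple roots to all roots: if some root space $\lie{g}_\beta$ with $\beta \notin \Phi^+ \cup \Phi_{J(\vla)}$ lay in $\lie{g}[\vla]$, then since $\lie{g}[\vla]$ is a Lie subalgebra containing $\lie{n}^+$, bracketing with suitable elements of $\lie{n}^+$ (moving $\beta$ up toward a negative simple root not in $J(\vla)$, which is possible precisely because $\beta \notin \Phi^+ \cup \Phi^-_{J(\vla)}$) would produce such an $x_{\alpha_i}^-$ in $\lie{g}[\vla]$ — contradiction. Hence $\lie{g}[\vla] = \lie{p}_{J(\vla)}$, and intersecting $-\wt \lie{g}[\vla] = -(\Phi^+ \cup \Phi_{J(\vla)} \cup \{0\})$ with $\Delta$ recovers exactly $\Delta_{J(\vla)}$, giving Equation \eqref{Efer}.

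The main obstacle I anticipate is the careful handling of the finiteness condition defining $\lie{g}[M]$: showing that for $X$ in the Levi part, $\C[X]m$ is genuinely finite-dimensional for \emph{every} $m$ (not just weight vectors, and not just $v_\lambda$) requires knowing that $\vla$ is locally $\lie{g}_{J(\vla)}$-finite, which is where the $\lie{g}_J$-integrability from Proposition \ref{Pgvm}/Theorem \ref{T1} does the real work; and dually, the ``moving up'' step that reduces the non-membership of a general negative root space to that of a negative simple root space needs one to check that $\lie{g}[\vla]$ is closed under the relevant brackets with $\lie{n}^+$, which is fine since $\lie{g}[\vla]$ is a subalgebra, but the combinatorics of which bracket to take deserves a clean statement (essentially: any $\beta \in \Phi^- \setminus \Phi^-_{J(\vla)}$ can be raised by adding simple roots in $\Phi^+$ to reach some $-\alpha_i$ with $i \notin J(\vla)$, staying within $\Phi^- \setminus \Phi^-_{J(\vla)}$ or hitting $-\alpha_i$).
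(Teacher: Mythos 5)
Your proof is essentially correct, but it quietly relies on a nontrivial theorem of Fernando that the paper does cite and you should too: the assertion early on that $\lie{g}[\vla]$ is an $\lie{h}$-graded Lie subalgebra of $\lie{g}$ is precisely the content of the remarks preceding \cite[Lemma 4.6]{Fe} (equivalently, that the set $\Phi[\vla]$ of $\alpha$-finite roots is closed). This is not automatic; without it, neither your ``bracket with $\lie{n}^+$ to move $\beta$ up'' step nor your reduction to root spaces would go through. Once that is in place, the overall shape of your argument matches the paper's: both reduce to computing $\Phi[\vla]$, both get the easy inclusions $\Phi^+ \cup \Phi^-_{J(\vla)} \subset \Phi[\vla]$ from membership in $\mathcal{O}^{\lie{p}_{J(\vla)}}$ (you phrase this as local $\lie{g}_{J(\vla)}$-finiteness; the paper cites \cite[\S 9.3, 9.4]{H3}), and both get $-\Delta_{I\setminus J(\vla)} \cap \Phi[\vla] = \emptyset$ from Theorem \ref{T1} via the one-dimensionality of the string $\vla_{\lambda - n\alpha_i}$.

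Where you genuinely diverge is the final combinatorial step. The paper invokes \cite[Lemma~3, Chapter~VI.1.7]{Bou} on parabolic subsets of $\Phi$: a closed subset of $\Phi$ containing $\Phi^+$ must be of the form $\Phi^+ \sqcup \Phi^-_J$, so missing $-\alpha_i$ for $i \notin J(\vla)$ pins down $J = J(\vla)$. You instead argue by hand that any $\beta \in \Phi^- \setminus \Phi^-_{J(\vla)}$ in $\Phi[\vla]$ could be raised by successive positive simple roots (all intermediate steps remaining roots, which is Bourbaki's Prop.~19 in VI.1.6, or the paper's own Proposition~\ref{Prootposet} with a starting simple root chosen in $\supp(-\beta)\setminus J(\vla)$) until one lands on $-\alpha_i$ with $i\notin J(\vla)$, contradicting the previous paragraph. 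This is slightly more elementary and self-contained; the paper itself remarks (just before Proposition~\ref{Prootposet}) that this chain-raising statement implies the Bourbaki parabolic-subsets lemma it uses. You could shorten your parenthetical: the intermediate roots do not need to avoid $\Phi^-_{J(\vla)}$ — you only need the endpoint to be $-\alpha_i$ with $i \notin J(\vla)$, which is guaranteed by choosing the starting simple root outside $J(\vla)$. Finally, the local $\lie{g}_{J(\vla)}$-finiteness step as you wrote it (``weights lie in the $W_{J(\vla)}$-saturation of a finite set'') deserves one more sentence: for a weight vector $v$ of weight $\mu$, the weights of $U(\lie{g}_{J(\vla)})v$ lie in $\wt\vla \cap (\mu + \Z\Delta_{J(\vla)})$, which is a $W_{J(\vla)}$-stable subset of $(\lambda-\nu') - \Z_+\Delta_{J(\vla)}$ (here $\nu'$ is the $\Delta_{I\setminus J(\vla)}$-part of $\lambda-\mu$), and a $W_{J(\vla)}$-stable set bounded above in the $J(\vla)$-root order is finite; alternatively, just cite \cite[\S 9.3]{H3} as the paper does.
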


\begin{proof}
Apply \cite[Lemma 4.6]{Fe} and the remarks preceding it. In the notation
of \cite{Fe}, $\vla$ lies in the BGG Category $\mathcal{O} \subset
\mathscr{M}(\lie{g},\lie{h}) \subset \overline{\mathscr{M}}(\lie{g},
\lie{h})$. Hence $\vla$ is $\alpha$-finite (i.e., $x_\alpha$ acts locally
finitely on $\vla$) for all roots $\alpha \in \Phi^+$. By \cite[\S 9.3,
9.4]{H3}, $\vla$ is also $\alpha$-finite for all roots $\alpha \in
\Phi^-_{J(\vla)}$, whereas $\vla$ is not $\alpha$-finite for all $\alpha
\in -\Delta_{I \setminus J(\vla)}$ by Theorem \ref{T1}. Hence by
\cite[Lemma 4.6]{Fe}, the set $F(\vla)$ of roots $\alpha \in \Phi$ such
that $\vla$ is $\alpha$-finite is a closed set containing $\Phi^+ \sqcup
\Phi^-_{J(\vla)}$ and disjoint from $-\Delta_{I \setminus J(\vla)}$. Now
by Lemma $3$ in \cite[Chapter VI.1.7]{Bou}, $F(\vla) = \Phi^+ \sqcup
\Phi^-_{J(\vla)}$, whence $\lie{g}[\vla] = \lie{h} \oplus \lie{n}^+
\oplus \lie{n}^-_{J(\vla)} = \lie{p}_{J(\vla)}$. Equation \eqref{Efer} is
now clear; it also follows directly from (the proof of) Theorem \ref{T1}.
\end{proof}

We now show that the convex hull of $\wt \vla$ is a polyhedron for a
large family of modules $\vla$.

\begin{proof}[Proof of Theorem \ref{T2}]
We break up the proof into steps for ease of exposition. We first show
that the convex hull is a polyhedron; next, we compute the extremal rays
if $\lambda$ is simply-regular; finally, we compute the stabilizer in $W$
of the weights and of their hull.\medskip

\noindent \textbf{Step 1.}
The first assertion (except for the stabilizer subgroup being
$W_{J(\vla)}$) follows from Proposition \ref{Tgvm} if $\vla = M(\lambda,
J')$. This implies the result when $|J_\lambda \setminus J(\vla)| \leq
1$, by Theorem \ref{Twtgvm}.

Next suppose that $\vla$ is pure (see Definition \ref{Dpure}). It now
suffices to show that $\conv_\R \wt \vla = \conv_\R \wt M(\lambda,
J(\vla))$. One inclusion follows from Proposition \ref{Pgvm}. Conversely,
to show that $\conv_\R \wt M(\lambda,J(\vla)) \subset \conv_\R \wt \vla$,
observe by Theorem \ref{T1} that $\wt \vla$ is $W_{J(\vla)}$-stable. Now
the vertices of $\conv_\R \wt M(\lambda, J(\vla))$ are
$W_{J(\vla)}(\lambda)$, and
\[
M(\lambda, J(\vla)) \twoheadrightarrow \vla \twoheadrightarrow
U(\lie{g}_{J(\vla)}) v_\lambda \cong L_{J(\vla)}(\lambda) \cong
U(\lie{g}_{J(\vla)}) m_\lambda.
\]

\noindent (Here, $m_\lambda$ and $v_\lambda$ generate $M(\lambda,
J(\vla))$ and $\vla$ respectively.) Thus, $\conv_\R \wt \vla$ also
contains these vertices. Now recall from \cite[Proposition 2.3]{KR} that
$\wt M(\lambda,J(\vla)) = \wt_{J(\vla)} \vla - \Z_+ (\Phi^+ \setminus
\Phi^+_{J(\vla)})$. We \textbf{claim} that for all vertices $\mu \in
W_{J(\vla)}(\lambda)$ and all $\alpha \in \Phi^+ \setminus
\Phi^+_{J(\vla)}$, the set $(\mu - \Z_+ \alpha) \cap \wt \vla$ is
infinite. (This implies in particular that the set of weights along every
extremal ray is infinite.) Now taking the convex hull (twice) shows the
result.

It remains to show the claim. For this, apply \cite[Proposition 4.17]{Fe}
to the pure module $\vla$. Note by purity and Proposition \ref{Phwfer}
that $\vla$ is $\alpha$-finite if $\alpha \in F := \Phi^+ \sqcup
\Phi^-_{J(\vla)}$, and $\alpha$-free if $\alpha \in T := \Phi^- \setminus
\Phi^-_{J(\vla)}$. Following the proof of \cite[Proposition 4.17]{Fe}
yields that $P = F \cup (T \cap (-T)) = F$, whence $\lie{p}_{\vla}^\pm =
\lie{p}_{J(\vla)}^\pm$. Moreover, the result asserts that the nilradical
of $\lie{p}^-_{J(\vla)}$ is torsion-free on all of $\vla$. This implies
that for all $\mu \in \wt \vla$ and $\alpha \in \Phi^+ \setminus
\Phi^+_{J(\vla)}$, the set $(\mu - \Z_+ \alpha) \cap \wt \vla$ is
infinite.\medskip

\noindent \textbf{Step 2.}
Suppose $\lambda$ is simply-regular. Then the first assertion can be
rephrased via Theorem \ref{Tgvmedges} to say that $\conv_\R \wt \vla =
\conv_\R \wt M(\lambda, J(\vla))$. As shown for pure modules,
$W_{J(\vla)}(\lambda) \subset \wt \vla$. It thus suffices to show -- by
the $W_{J(\vla)}$-invariance of both convex hulls in $\lie{h}^*$ -- that
all extremal rays of $\conv_\R \wt M(\lambda, J(\vla))$ at the vertex
$\lambda$ are also contained in $\conv_\R \wt \vla$.
But by Theorem \ref{Tgvmedges}, the extremal rays at $\lambda$ are $\{
\lambda - \R_+ \alpha_i \ : \ i \notin J(\vla) \}$, and these are indeed
contained in $\conv_\R \wt \vla$ (by Theorem \ref{T1}) since $\lambda -
\Z_+ \alpha_i \subset \wt \vla\ \forall i \notin J(\vla)$. This shows
that $\conv_\R \wt \vla = \conv_\R \wt M(\lambda, J(\vla))$, and hence is
a polyhedron, with extremal rays at $\lambda$ as described.\medskip

\noindent \textbf{Step 3.}
Having computed the convex hull, we next show that the stabilizer $W'$ of
$\conv_\R \wt \vla$ equals $W_{J(\vla)}$. By Theorem \ref{T1},
$W_{J(\vla)} \subset W'$. Conversely, if $w' \in W'$, then $w' \lambda$
is a vertex of the convex polyhedron $\conv_\R \wt \vla = \conv_\R \wt
M(\lambda, J(\vla))$. Thus there exists $w \in W_{J(\vla)}$ such that $w'
\lambda = w \lambda$. Now by \cite[Proposition 2.3]{KR}, $w^{-1} w'$
sends the root string $\lambda - \Z_+ \alpha \subset \wt \vla$ to
$\conv_\R \wt M(\lambda, J(\vla))$ for all $\alpha \in \Phi^+ \setminus
\Phi^+_{J(\vla)}$. But then,
\begin{equation}\label{Estab}
w^{-1} w'(\alpha) \in W(\Phi) \setminus (\Phi^- \sqcup \Phi^+_{J(\vla)})
= \Phi^+ \setminus \Phi^+_{J(\vla)}, \qquad \forall \alpha \in \Phi^+
\setminus \Phi^+_{J(\vla)}.
\end{equation}

\noindent Let $w^{-1} w' = s_{i_1} \cdots s_{i_r}$ be a reduced
expression in $W$. If $w' \notin W_{J(\vla)}$, choose the largest $t$
such that $i_t \notin J(\vla)$. Then by Corollary $2$ to Proposition $17$
in \cite[Chapter VI.1.6]{Bou}, $\beta_t := s_{i_r} \cdots
s_{i_{t+1}}(\alpha_{i_t})$ is a positive root such that $w^{-1}
w'(\beta_t) < 0$. By Equation \eqref{Estab}, $\beta_t \in
\Phi^+_{J(\vla)}$. Since $i_u \in J(\vla)$ for $u>t$, hence $\alpha_{i_t}
\in W_{J(\vla)}(\Phi^+_{J(\vla)}) = \Phi_{J(\vla)}$. This implies that
$i_t \in J(\vla)$, which is a contradiction. Thus no such $w' \in W'
\setminus W_{J(\vla)}$ exists, showing that $W' = W_{J(\vla)}$.

Finally, Theorem \ref{T1} implies that $W_{J(\vla)}$ stabilizes $\wt
\vla$. Moreover, if $w \in W$ stabilizes $\wt \vla$, then it also
stabilizes $\conv_\R \wt \vla$, whence $w \in W_{J(\vla)}$ from the above
analysis.
\end{proof}

\begin{remark}
We have thus provided three different proofs for Theorem \ref{T2} in the
case when $\lambda$ is simply-regular and $\vla = L(\lambda)$ or
$M(\lambda,J')$. One method of proof uses convexity theory as in Theorem
\ref{Tgvmedges}; another uses $\lie{sl}_2$-theory as in Theorem
\ref{Twtgvm} together with results from \cite{H3}; and the third uses
Proposition \ref{Phwfer} and results from \cite{Fe}. More precisely, note
by the discussion following \cite[Remark 2.9]{Fe} that all parabolic
Verma modules $M(\lambda,J')$ are pure (see Definition \ref{Dpure}), as
are all simple modules $L(\lambda)$. Now use the arguments for pure
$\vla$ in the proof of Theorem \ref{T2}.
\end{remark}

\subsection{Relating maximizer subsets and (weak) faces}

We now prove Theorem \ref{T3}. It is clear that every maximizer subset of
a polyhedron is a weak $\bba$-face, hence is $(\{ 2 \}, \{ 1, 2
\})$-closed. To show that it must also contain a vertex requires
additional work. Thus, we first extend the main technical tool used in
\cite{KR}, from subfields $\F \subset \R$ to arbitrary additive subgroups
$\bba$:

\begin{prop}\label{Pkr2}
Fix $0 \neq \bba \subset (\R,+)$. Suppose $Y \subset X \subset
\mathbb{Q}^n \subset \R^n$, and $\conv_\R(X)$ is a polyhedron. Then $Y
\subset X$ is a weak $\bba$-face if and only if $Y = F \cap X$, where $F$
is a face of $\conv_\R(X)$.
\end{prop}

\noindent Thus, $Y$ is independent of $\bba$, and weak $\bba$-faces are a
natural extension of the usual notion of a face. Note that \cite[Theorem
4.3]{KR} was stated for $\bba = \F$ (an arbitrary subfield of $\R$), but
assumed more generally that $X \subset \F^n \subset \R^n$. However,
Proposition \ref{Pkr2} is suitable for the setting of $X = \wt \vla$ as
in this paper, because by Lemma \ref{Lfield}, one can replace $X$ by
$\lambda - \wt \vla \subset \Z_+ \Delta \cong \Z_+^n \subset \R^n \cong
\liehr^*$.

\begin{proof}
By \cite[Theorem 4.3]{KR}, if $Y = F \cap X$, then $Y \subset X$ is a
weak $\R$-face, and hence a weak $\bba$-face from the definitions.
Conversely, if $Y$ is a weak $\bba$-face of $X$, then by Lemma
\ref{Lfield} (dividing $a \cdot \Z \subset \bba$ by $a$, for any $0 < a
\in \bba$), $Y \subset X$ is a weak $\Z$-face, hence a weak
$\mathbb{Q}$-face by Lemma \ref{L0}. Again by \cite[Theorem 4.3]{KR}, $Y
= F \cap X$ for some face $F$ of $\conv_\R(X)$, as desired.
\end{proof}

Next, we mention a result pointed out to us by V.~Chari, which in
particular provides a sufficient condition for a weak face to contain a
vertex. When $\lambda \in P^+$ is also simply-regular, the following
result combined with Theorem \ref{Tface} for $L(\lambda)$, as well as the
$W$-invariance of $\wt L(\lambda)$, shows the main results in \cite{KR}
which classify the (positive) weak faces of $\wt L(\lambda)$.

\begin{lemma}\label{L12}
Suppose $0 \neq \lambda \in P^+$ and a nonempty subset $Y \subset
\wtvla{}$ is $(\{ 2 \}, \{ 1, 2 \})$-closed in $\wt \vla = \wt
L(\lambda)$. Then $Y$ contains a vertex $w(\lambda)$ for some $w \in W$.
\end{lemma}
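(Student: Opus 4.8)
\emph{Strategy.} The plan is to prove that \emph{any} element of $Y$ of largest length (with respect to the form $(,)$ on $\liehr^*$) is forced to be a vertex of the Weyl polytope $\calp(\lambda)=\conv_\R\wt L(\lambda)$, and then to invoke \eqref{Eweyl}, which identifies the vertex set of $\calp(\lambda)$ with $W(\lambda)$. So I first reduce the lemma to a purely structural claim about weights, and then run a short ``length-maximizer'' argument.

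\emph{Step 1: every non-extreme weight is the midpoint of a symmetric pair of weights.} The key structural input I would establish is: if $\nu\in\wt L(\lambda)\setminus W(\lambda)$, then there is a root $\beta\in\Phi$ with $\nu+\beta,\ \nu-\beta\in\wt L(\lambda)$. To see this, choose $w\in W$ with $\nu^{+}:=w^{-1}\nu\in P^{+}$; by $W$-invariance of $\wt L(\lambda)$ it suffices to find such a $\beta$ for $\nu^{+}$ and then apply $w$. Since $\nu\notin W(\lambda)$ we have $\nu^{+}\neq\lambda$ (by \eqref{Eweyl}), hence $\nu^{+}+\alpha_i\in\wt L(\lambda)$ for some $i\in I$: otherwise $x^{+}_{\alpha_j}$ would kill all of $L(\lambda)_{\nu^{+}}$ for every $j$, so every nonzero vector there would be a highest weight vector, forcing $\nu^{+}=\lambda$ by simplicity of $L(\lambda)$. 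Now I would invoke the standard $\alpha$-string property in the finite-dimensional module $L(\lambda)$: the $\alpha_i$-string through $\nu^{+}$ is an unbroken segment $\nu^{+}-p\alpha_i,\dots,\nu^{+}+q\alpha_i$ in $\wt L(\lambda)$ with $p-q=\nu^{+}(h_i)$. Since $q\geq1$ and $\nu^{+}(h_i)\in\Z_{+}$, we get $p=q+\nu^{+}(h_i)\geq1$, so $\nu^{+}-\alpha_i\in\wt L(\lambda)$ as well, and $\beta:=w\alpha_i$ works.

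\emph{Step 2: conclusion.} Pick $y^{*}\in Y$ with $(y^{*},y^{*})$ maximal; this exists since $Y$ is finite and nonempty. Suppose $y^{*}$ is not a vertex of $\calp(\lambda)$. By Step 1 there is $\beta\in\Phi$ (in particular $\beta\neq0$) with $y^{*}\pm\beta\in\wt L(\lambda)$. Apply the $(\{2\},\{1,2\})$-closedness of $Y$ in $\wt L(\lambda)$ to $g\in\Fin(Y,\{1,2\})$ and $f\in\Fin(\wt L(\lambda),\{1,2\})$ defined by $g(y^{*})=2$, $f(y^{*}+\beta)=f(y^{*}-\beta)=1$, and $g=f=0$ elsewhere: here $\supp g=\{y^{*}\}\subset Y$, $\ell(f)=\ell(g)=2$ and $\disp(f)=2y^{*}=\disp(g)$, so $\supp(f)\subset Y$, i.e.\ $y^{*}+\beta,\ y^{*}-\beta\in Y$. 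But by the parallelogram identity
\[
(y^{*}+\beta,\,y^{*}+\beta)+(y^{*}-\beta,\,y^{*}-\beta)=2(y^{*},y^{*})+2(\beta,\beta)>2(y^{*},y^{*}),
\]
so at least one of $y^{*}\pm\beta$ has strictly larger length than $y^{*}$, contradicting the choice of $y^{*}$. Hence $y^{*}$ is a vertex of $\calp(\lambda)$, so $y^{*}=w(\lambda)$ for some $w\in W$ by \eqref{Eweyl}, and $w(\lambda)=y^{*}\in Y$.

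\emph{Main difficulty.} I expect Step 1 to be the only real content. One must read off correctly that being a non-vertex of $\calp(\lambda)$ is exactly having dominant conjugate $\neq\lambda$ (immediate from \eqref{Eweyl}), and then check that the $\mathfrak{sl}_2$-string through $\nu^{+}$ genuinely extends \emph{below} $\nu^{+}$, not only above it — which is where the dominance of $\nu^{+}$ is used. Step 2 — the length-maximizer together with the parallelogram law, and the encoding of the hypothesis via the pair $f,g$ — is routine.
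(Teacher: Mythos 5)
Your proof is correct, and it shares the paper's central mechanism but replaces the paper's iterative argument with a one-shot extremal argument. Both proofs hinge on the same ``pincer'' step: starting from a non-vertex $\nu$ of $\calp(\lambda)$, produce a root $\beta$ with $\nu\pm\beta\in\wt L(\lambda)$ (your Step~1; in the paper this is done for a dominant $\mu$ via $\mu+\alpha_i$ and $s_{\alpha_i}(\mu+\alpha_i)$), and then feed $\nu+\nu=(\nu+\beta)+(\nu-\beta)$ into the $(\{2\},\{1,2\})$-closedness to force $\nu\pm\beta\in Y$. Where the two diverge is the termination strategy. The paper, after each application, conjugates $Y$ by a Weyl element to make the new weight dominant and then argues by downward induction on $\hgt(\lambda-\mu)$, repeatedly replacing $Y$ by $w(Y)$ (using that closedness is preserved under Weyl conjugation). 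You instead fix once and for all an element $y^*\in Y$ of maximal squared length and observe, via the parallelogram identity and the positive-definiteness of $(,)$ on $\liehr^*$, that $y^*\pm\beta\in Y$ would produce a longer element of $Y$, an immediate contradiction. Your route is a bit shorter: it avoids the induction, the repeated conjugation of $Y$, and the comparison $w(\mu+\alpha_i)\geq\mu+\alpha_i$, trading all of these for a single variational observation. Both arguments use only finiteness of $\wt L(\lambda)$, simplicity of $L(\lambda)$, and the $\alpha$-string property, so the logical overhead is the same; the choice is essentially one of taste, with yours somewhat cleaner to state.

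One small remark for completeness: in your Step~2 you implicitly need $y^*+\beta\neq y^*-\beta$ so that $f$ really takes values in $\{1,2\}$ on two distinct weights rather than value $2$ on one; this is fine since $\beta\neq0$, but it is worth saying explicitly (you flag $\beta\neq0$ in passing, so this is cosmetic).
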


\begin{proof}
(By V.~Chari.) Since $\wtvla{}$ is $W$-stable, (use Lemma \ref{Lfield}
and translate $Y$; now) assume that $ Y \neq \emptyset$ contains some
$\mu \in P^+$. If $\mu = \lambda$, we are done; otherwise, $\lie n^+
L(\lambda)_\mu \neq 0$, so $\mu + \alpha_i \in \wtvla{}$ for some $i \in
I$. But then, so must $s_{\alpha_i}(\mu + \alpha_i) = \mu + \alpha_i -
\langle \mu,\alpha_i \rangle \alpha_i - 2 \alpha_i$, where $\langle \mu,
\alpha_i \rangle \in \Z_+$. Hence $\mu \pm \alpha_i \in \wtvla{}$, and
since $Y$ is $(\{ 2 \}, \{ 1, 2 \})$-closed, $\mu \pm \alpha_i \in Y$.
Now $w(\mu + \alpha_i) \in P^+$ for some $w \in W$. But then $w(Y)$ has a
strictly larger dominant weight: $w(\mu + \alpha_i) \geq \mu + \alpha_i >
\mu$. Repeat this process in $\wtvla{}$; by downward induction on the
height of $\lambda - \mu$, it eventually stops, and stops at $\mu =
\lambda$. Thus, $\lambda \in w(Y)$ for some $w \in W$, whence
$w^{-1}(\lambda) \in Y$.
\end{proof}

Equipped with these technical results, it is now possible to prove
Theorem \ref{T3}.

\begin{proof}[Proof of Theorem \ref{T3}]
Theorem \ref{T2} easily implies that $(1) \Longleftrightarrow (2)$ using
Proposition \ref{Pkr2} and Lemma \ref{L0}.
(One needs to first translate $Y \subset \wt \vla$ to $\lambda - Y
\subset \lambda - \wt \vla$ via Lemma \ref{Lfield}.) That $(3) \implies
(1)$ follows by Theorem \ref{Tface} and $W_{J(\vla)}$-invariance, since
$w(\wt_J \vla) = (\wt \vla)(w(\rho_{I \setminus J}))$.
Conversely, if $\vla = M(\lambda, J')$, then $(1) \implies (3)$ follows
from \cite[Theorem 1]{KR} and the following fact: \textit{given $\lambda
\in \lie{h}^*$, $M(\lambda) \twoheadrightarrow \vla$, integers $k,l>0$
and $J'_r, J''_s \subset I$ for $1 \leq r \leq k$ and $1 \leq s \leq l$,}
\begin{equation}\label{Ecomp2}
\bigcap_{r=1}^k \wt_{J'_r} \vla \cap \bigcap_{s=1}^l \conv_\R
(\wt_{J''_s} \vla) = \wt_{\cap_r J'_r \cap_s J''_s} \vla.
\end{equation}

\noindent Using the above analysis, it follows that $(1) \implies (3)$ if
$\vla$ is pure or $|J_\lambda \setminus J(\vla)| \leq 1$, since it was
shown in the proof of Theorem \ref{T2} that $\conv_\R \wt \vla = \conv_\R
\wt M(\lambda, J(\vla))$ in both of these cases.

It remains to prove that $(1) \implies (4) \implies (3)$ when $\lambda$
is simply-regular and $\vla$ is any highest weight module. Note that (4)
simply says that $Y$ contains a point in $\wt_{J(\vla)} \vla$ and is $(\{
2 \}, \{ 1, 2 \})$-closed in $\wt \vla$. Now $(1) \implies (4)$ follows
from Theorem \ref{Tface}, since any maximizer subset necessarily contains
a vertex (because the polyhedron $\conv_\R \wt \vla$ has a vertex
$\lambda$ by Theorem \ref{T2}), and all vertices are indeed in
$\wt_{J(\vla)} \vla$. Finally, suppose (4) holds for $Y$. Then $Y \cap
\wt_{J(\vla)} \vla$ is $(\{ 2 \}, \{ 1, 2 \})$-closed in $X_1 :=
\wt_{J(\vla)} \vla$ by Lemma \ref{Lfield}. It follows by Lemma
\ref{Lfacts} that
\[
\varpi_{J(\vla)}(Y) \cap \wt L_{J(\vla)}(\pi_{J(\vla)}(\lambda)) =
\varpi_{J(\vla)}(Y \cap \wt_{J(\vla)} \vla) \subset \wt
L_{J(\vla)}(\pi_{J(\vla)}(\lambda))
\]

\noindent is $(\{ 2 \}, \{ 1, 2 \})$-closed. Hence by Lemma \ref{L12}
applied to $\lie{g}_{J(\vla)}$, $\varpi_{J(\vla)}(Y \cap \wt_{J(\vla)}
\vla)$ contains a vertex of the Weyl polytope of
$\pi_{J(\vla)}(\lambda)$. Again via Lemma \ref{Lfacts}, $Y \cap
\wt_{J(\vla)} \vla$ contains a vertex $w \lambda$ for some $w \in
W_{J(\vla)}$. Thus $w^{-1}(Y)$ is $(\{ 2 \}, \{ 1, 2 \})$-closed in $\wt
\vla$ and contains $\lambda$; moreover, $\lambda - \Delta \subset \wt
\vla$ since $\lambda$ is simply-regular. Hence $w^{-1}(Y) = \wt_J \vla$
for some (unique) subset $J \subset I$, by Theorem \ref{Tface}. This
shows (3).
\end{proof}

\begin{remark}
Note that if $\lambda$ is simply-regular, and either $|J_\lambda
\setminus J(\vla)| \leq 1$ or $\vla = M(\lambda,J')$, then we do not need
to assume the condition $Y \cap \wt_{J(\vla)} \vla \neq \emptyset$ in (4)
in Theorem \ref{T3}. Indeed, use Theorem \ref{Twtgvm} and assume $Y
\subset \wt \vla = \wt M(\lambda, J(\vla))$ is $(\{ 2 \}, \{ 1, 2
\})$-closed and nonempty.
By \cite{KR}, suppose $\mu \in \wt_{J(\vla)} M(\lambda,J(\vla))$ and
$\beta \in \Z_+ (\Phi^+ \setminus \Phi^+_{J(\vla)})$ such that $\mu -
\beta \in Y$. Then $(\mu - \beta) + (\mu - \beta) = \mu + (\mu - 2
\beta)$. Hence $\mu, \mu - 2 \beta \in Y$, so $Y \cap \wt_{J(\vla)}
M(\lambda,J(\vla)) \neq \emptyset$.
\end{remark}

\section{Application 2: Largest and smallest modules with specified hull
or stabilizer}\label{Sappl2}

We now discuss an application which is related to Theorem \ref{T1}.
Notice that the set of highest weight modules is naturally equipped with
a partial order under surjection, and it has unique maximal and minimal
elements $M(\lambda)$ and $L(\lambda)$ respectively. We now show that
this ordering can be refined in terms of the stabilizer subgroup of the
weights, or equivalently, their convex hull. For instance if $\lambda +
\rho_I \in P^+$, then by \cite[Proposition 4.3]{H3}, $M(w \cdot \lambda)
\subset M(\lambda)\ \forall w \in W$, and hence
\[
w_\circ \cdot \lambda = w_\circ(\lambda) - 2 \rho_I \quad \implies \quad
\conv_\R \wt M(\lambda) = \conv_\R \wt (M(\lambda) / M(w_\circ \cdot
\lambda)) = \lambda - \R_+ \Delta.
\]

\noindent In fact, there is a unique ``smallest'' highest weight module
whose weights have this same convex hull -- equivalently, whose set of
weights has trivial stabilizer subgroup in $W$. We now prove our last
main result, which generalizes this fact.

\begin{proof}[Proof of Theorem \ref{Tminmax}]
Clearly, $(1) \implies (2) \implies (3)$ by Theorem \ref{T2}. If (3)
holds, define $\mu_j := \lambda - (\lambda(h_j) + 1) \alpha_j$ for all
$j$. Then $\mu_j = s_j(\lambda + \alpha_j)$, so $\mu_j \notin \wt \vla$
for $j \in J'$. Now if $m_\lambda, v_\lambda$ span $M(\lambda)_\lambda$
and $\vla_\lambda$ respectively, then $M(\lambda)_{\mu_j} = \C \cdot
(x_{\alpha_j}^-)^{\lambda(h_j) + 1} m_\lambda$ (using the Kostant
partition function), whence $(x_{\alpha_j}^-)^{\lambda(h_j) + 1}
v_\lambda = 0\ \forall j \in J'$.
Thus $M(\lambda,J') \twoheadrightarrow \vla$, which also implies that
$M_{\max}(\lambda,J') = M(\lambda,J')$.

We show the rest of the implication $(3) \implies (4)$ case-by-case.
First if $J' = J_\lambda$, then $M_{\min}(\lambda,J_\lambda) :=
L(\lambda)$ works by Theorem \ref{T2}. We now show that if $\lambda$ is
simply-regular or $J' = \emptyset$, then there exists
$M_{\min}(\lambda,J')$ as in (4), and moreover, $\conv_\R \wt
M_{\min}(\lambda,J') = \conv_\R \wt M(\lambda,J')$. This would imply that
$(4) \implies (1)$ by the ``intermediate value property'' of convex hulls.

Define $\mathbb{M}(\lambda,J')$ to be the set of all nonzero $M(\lambda)
\twoheadrightarrow \vla$ such that $\conv_\R \wt \vla$ is invariant under
$W_{J'}$ but not a larger parabolic subgroup of $W$. Given $\vla \in
\mathbb{M}(\lambda,J')$, let $K_{\vla}$ denote the kernel of the
surjection $: M(\lambda) \twoheadrightarrow \vla$. Now given such a
$\vla$ and $i \in I$, suppose $(x_{\alpha_i}^-)^n m_\lambda \in K_{\vla}$
for some $n \geq 0$. If $i \in J_\lambda$ and $n \leq \lambda(h_i)$ or $i
\notin J_\lambda$ then $m_\lambda \in K_{\vla}$ by $\lie{sl}_2$-theory,
which is false since $\vla \neq 0$.
Otherwise if $i \in J_\lambda$ and $n > \lambda(h_i)$, then
$(x_{\alpha_i}^-)^{\lambda(h_i) + 1} m_\lambda \in K_{\vla}$ by
$\lie{sl}_2$-theory, whence $M(\lambda, J' \cup \{ i \})
\twoheadrightarrow \vla$. By \cite[\S 9.3, 9.4]{H3}, $\vla \in
\mathbb{M}(\lambda,J')$ is stable under $W_{J' \cup \{ i \}}$, so $i \in
J'$.

We conclude that for all $\vla \in \mathbb{M}(\lambda,J')$,
$(x_{\alpha_i}^-)^n m_\lambda \notin K_{\vla}$ for all $n \geq 0$ and $i
\notin J'$. Since $K_{\vla} \subset M(\lambda)$ is a weight module and
since
\[
\dim (K_{\vla})_{\lambda - n \alpha_i} \leq \dim M(\lambda)_{\lambda - n
\alpha_i} = 1 \qquad \forall i \in I, n \in \Z_+,
\]

\noindent hence $(K_{\vla})_{\lambda - n \alpha_i} = 0\ \forall i \notin
J', n \in \Z_+$. Define $K(\lambda,J') := \sum_{\vla \in
\mathbb{M}(\lambda,J')} K_{\vla}$; then
\begin{equation}\label{Eminmax}
\lambda - \Z_+ \alpha_i \subset \wt M(\lambda,J') / K(\lambda,J') \qquad
\forall \lambda \in \lie{h}^*,\ J' \subset J_\lambda,\ i \notin J'.
\end{equation}

Now suppose (as per the assumptions of the theorem) that $\lambda$ is
simply-regular or $J' = \emptyset$, and $\vla \in
\mathbb{M}(\lambda,J')$. Define
$M_{\min}(\lambda,J') := M(\lambda,J') / K(\lambda,J')$; then
$M_{\max}(\lambda,J'), M_{\min}(\lambda,J') \in \mathbb{M}(\lambda,J')$
by Equation \eqref{Eminmax} and \cite[\S 9.3,9.4]{H3}. Moreover, it is
clear that $\vla \twoheadrightarrow M_{\min}(\lambda,J')$ for all $\vla
\in \mathbb{M}(\lambda,J')$.
Now note by Theorem \ref{T2} that the extremal rays of $\conv_\R
M(\lambda,J')$ at $\lambda$ are $\lambda - \R_+ \Delta_{I \setminus J'}$.
Hence $\conv_\R \wt M_{\max}(\lambda,J') = \conv_\R \wt
M_{\min}(\lambda,J')$. This also shows that $(4) \implies (1)$.
(Moreover, if there is an ``overlap'' in the sense that $\lambda$ is
simply-regular and $J' = J_\lambda$, then $L(\lambda) \in
\mathbb{M}(\lambda,J')$, whence $L(\lambda) \twoheadrightarrow
M_{\min}(\lambda,J') \twoheadrightarrow L(\lambda)$. Thus,
$M_{\min}(\lambda,J')$ is indeed well-defined.)
\end{proof}

\begin{remark}
Note that if $J' \subset J_\lambda$ and $|J_\lambda \setminus J'| \leq
1$, then Theorem \ref{Twtgvm} implies that $\wt : \mathbb{M}(\lambda,J')
\to (\{ 0, 1 \}^{\lambda - \Z_+ \Delta})^{W_{J'}}$ is constant (where
$\mathbb{M}(\lambda,J')$ was defined in the proof of Theorem
\ref{Tminmax}).
\end{remark}

\appendix
\section{Paths between comparable weights in highest weight modules}

In this section, we explain how a well-known result on root systems is a
special case of a phenomenon that occurs in all highest weight modules.
The results in this appendix are not used in the rest of this paper,
though we indicate how they may be used in proving Proposition
\ref{Phwfer}.

Begin by recalling Proposition $19$ in \cite[Chapter VI.1.6]{Bou}: every
positive root $\mu' \in \Phi^+$ can be written as a sum $\mu' =
\alpha_{i_1} + \cdots + \alpha_{i_k}$ of simple roots such that each
partial sum $\alpha_{i_1} + \cdots + \alpha_{i_l}$ is a positive root for
each $1 \leq l \leq k$. We now claim more strongly that it is possible to
rearrange these (possibly repeated) simple roots in such a way that any
of them occurs as $\alpha_{i_1}$. Even more generally, we will show below:

\begin{prop}\label{Prootposet}
Recall the usual partial order on $\lie{h}^*$: $\mu \leq \mu'$ if $\mu' -
\mu \in Q^+ = \Z_+ \Delta$. Suppose $\mu \leq \mu' \in \Phi^+$ (and $\mu$
is a simple root). Then there exists a sequence $i_1, \dots, i_N \in I$
such that for all $0 \leq l \leq N$, $\mu_l := \mu' - \sum_{j=1}^l
\alpha_{i_j} \in \Phi^+$ and $\mu_N = \mu$.
\end{prop}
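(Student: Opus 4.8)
The plan is to prove the statement by induction on $\hgt(\mu' - \mu)$, in the slightly more general form in which $\mu$ is allowed to be an arbitrary element of $\Phi^+$ rather than a simple root; this extra generality is forced on us by the shape of the induction, and the Proposition is then the special case where $\mu$ is simple. The combinatorial heart of the argument is a dichotomy: given $\mu < \mu'$ in $\Phi^+$, write $\nu := \mu' - \mu \in Q^+ \setminus \{0\}$ and $\supp(\nu) := \{ i \in I : \text{the coefficient } n_i \text{ of } \alpha_i \text{ in } \nu \text{ is positive} \}$; then at least one of the following holds. (a) There is $i \in \supp(\nu)$ with $(\mu, \alpha_i) < 0$; in this case the $\alpha_i$-string through $\mu$ extends upward (as $\mu$ and $\alpha_i$ are not proportional), so $\mu + \alpha_i \in \Phi^+$, and $\mu + \alpha_i \leq \mu'$ since $n_i \geq 1$. (b) There is $j \in \supp(\nu)$ with $(\mu', \alpha_j) > 0$; here $\mu' \neq \alpha_j$ (otherwise $\mu \leq \alpha_j$ with $\mu \in \Phi^+$ would force $\mu = \mu'$), so the $\alpha_j$-string through $\mu'$ extends downward, $\mu' - \alpha_j \in \Phi^+$, and $\mu \leq \mu' - \alpha_j$ since $n_j \geq 1$.

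To establish the dichotomy, suppose (a) fails, i.e.\ $(\mu, \alpha_i) \geq 0$ for every $i \in \supp(\nu)$; then $(\mu, \nu) = \sum_{i \in \supp(\nu)} n_i (\mu, \alpha_i) \geq 0$, and since the form is positive definite we have $(\nu,\nu) > 0$, hence $(\mu', \nu) = (\mu,\nu) + (\nu,\nu) > 0$. As $(\mu', \nu) = \sum_{i \in \supp(\nu)} n_i (\mu', \alpha_i)$, some $j \in \supp(\nu)$ must satisfy $(\mu', \alpha_j) > 0$, which is (b). The induction itself is then immediate: if $\hgt(\mu'-\mu) = 0$ take $N = 0$; otherwise, in case (a) apply the inductive hypothesis to the pair $(\mu + \alpha_i, \mu')$ (valid since $\mu + \alpha_i \in \Phi^+$ and the height of the difference has dropped by one) and prepend the step $\mu \to \mu + \alpha_i$, while in case (b) apply it to $(\mu, \mu' - \alpha_j)$ and append the step $\mu' - \alpha_j \to \mu'$. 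In either case the resulting chain stays in $\Phi^+$, joins $\mu$ to $\mu'$ by single-simple-root steps, and the Proposition follows by taking $\mu$ to be the given simple root.

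The point requiring care — and the reason for the two-sided formulation — is that the naive strategy of only building the chain \emph{upward} from $\mu$ can fail: it is not in general true that $\mu + \alpha_i \in \Phi^+$ for some $i \in \supp(\mu' - \mu)$, and genuine obstructions of this kind already occur in type $G_2$. Allowing the recursion also to remove a simple root from the top root $\mu'$ circumvents this, and packaging both moves into one induction on $\hgt(\mu' - \mu)$ makes the proof uniform. Beyond this, the only ingredients are the elementary root-string facts and the non-proportionality checks noted above, all of which are routine. I also expect to remark that this strengthening of \cite[Ch.~VI.1.6, Prop.~19]{Bou} in turn reproves the Bourbaki fact used in the proof of Proposition \ref{Phwfer}.
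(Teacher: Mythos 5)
Your proof is correct, and it takes a genuinely different route from the paper's. In the paper, Proposition~\ref{Prootposet} is obtained as the specialization $\vla = L(\theta)$ of Theorem~\ref{Tkumar}, whose proof is module-theoretic: it runs a (double) induction on $\hgt(\mu'-\mu)$ and $\hgt(\lambda-\mu')$, invokes the $W_{J(\vla)}$-invariance of $\wt\vla$ from Theorem~\ref{T1}, and applies the longest element $w^J_\circ \in W_J$ to a finite-dimensional $\lie{g}_J$-lowest-weight submodule to convert an inadmissible pair into a shorter one. Your argument stays entirely inside the root system: the dichotomy ``step up from $\mu$ or step down from $\mu'$'' rests only on $(\mu',\nu) = (\mu,\nu) + (\nu,\nu) > 0$ whenever $(\mu,\nu)\geq 0$, plus the elementary root-string facts, and all the verifications (non-proportionality, the new endpoint lying in $\Phi^+$ and in the interval $[\mu,\mu']$) are sound. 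Each route has its payoff: the paper's gets the full Theorem~\ref{Tkumar} for a large class of highest weight modules at once, with the adjoint representation as one instance; yours is shorter, fully self-contained, uses no representation theory, and proves the slightly stronger version with $\mu$ an arbitrary positive root.

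One peripheral remark does not hold up: the assertion that the upward-only strategy ``can fail'' and that ``genuine obstructions of this kind already occur in type $G_2$.'' In fact case~(a) of your own dichotomy always holds. If $(\mu,\alpha_i)\geq 0$ for every $i\in\supp(\nu)$, then $(\mu,\mu') = (\mu,\mu) + (\mu,\nu) > 0$, which forces $\nu = \mu'-\mu$ to be a positive root; applying the Jacobi identity iteratively along a Bourbaki chain $\nu = \alpha_{j_1}+\cdots+\alpha_{j_k}$ (with every partial sum a root) then gives $[\lie{g}_\mu,\lie{g}_\nu]=0$ unless $\mu+\alpha_{j_l}\in\Phi$ for some $l$, contradicting $[\lie{g}_\mu,\lie{g}_\nu]=\lie{g}_{\mu'}\neq 0$. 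A direct check of all comparable pairs in $G_2$ also confirms there is no obstruction there. This does not affect the correctness of your proof, which is valid as written; it only means the two-sided recursion is a stylistic choice rather than a forced one, and the motivational sentence should be dropped or softened.
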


\noindent Such a result would in fact imply the fact from \cite{Bou} that
was used to show Proposition \ref{Phwfer} -- and by extension, Theorem
\ref{T2} and hence Theorem \ref{T3} as well, for all pure modules $\vla$
(e.g., $L(\lambda)$).

More generally, one can ask the same question for every highest-weight
module $\vla$ for $\lambda \in \lie{h}^*$ (e.g., in every
finite-dimensional simple module):

\begin{qn}\label{Qchain}
Fix $\lambda \in \lie{h}^*$ and $M(\lambda) \twoheadrightarrow \vla$, and
suppose $\mu \leq \mu' \in \wt \vla$. Can we find a sequence of weights
$\mu_j \in \wt \vla$ such that each $\mu_j - \mu_{j+1}$ is a simple root,
and $\mu' = \mu_0 > \mu_1 > \dots > \mu_N = \mu$?
\end{qn}

This question is very general; one can ask it in special cases such as
Verma modules or finite-dimensional modules $\vla = L(\lambda)$ for
$\lambda \in P^+$. (A special case of this is the adjoint representation
as in Proposition \ref{Prootposet}.) Although this result is quite
natural to expect, we are not aware of a reference in the literature
where it is proved. S.~Kumar has communicated a proof to us in the
finite-dimensional case, which we now reproduce, and extend suitably to
all $\vla$ using Theorem \ref{T1}. We thus answer this question
positively in a large number of cases, which include all of the above
examples, as well as others such as \cite[Lemma 4.1]{LCL}.

\begin{theorem}\label{Tkumar}
Suppose $\lambda \in \lie{h}^*$ and $M(\lambda) \twoheadrightarrow \vla$.
Also assume that in the usual partial order on $\lie{h}^*$, $\lambda \geq
\mu' \geq \mu \in \wt \vla$, and that one of the following occurs:
\begin{enumerate}
\item One of these two inequalities is an equality;
\item $\mu' - \mu \in \Delta \cup \Z_+ \Delta_{J(\vla)}$;
\item $|J_\lambda \setminus J(\vla)| \leq 1$ (e.g., $\vla = L(\lambda)$
is simple); or
\item $\vla = M(\lambda, J')$ is a parabolic Verma module, for some $J'
\subset J_\lambda$.
\end{enumerate}

\noindent Then there exists a sequence of weights $\mu_j \in \wt \vla$
such that
\begin{equation}\label{Echain}
\mu' = \mu_0 > \mu_1 > \dots > \mu_N = \mu, \qquad \mu_j - \mu_{j+1} \in
\Delta\ \forall j.
\end{equation}
\end{theorem}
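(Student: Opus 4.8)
The plan is to prove Theorem~\ref{Tkumar} by reducing, in each of the four cases, to a situation governed by a \emph{single} highest weight module over a well-chosen subalgebra, and then to invoke the one genuinely new ingredient: a path-existence statement inside a finite-dimensional simple module, which is what S.~Kumar's argument supplies. First I would dispose of case~(1): if $\mu' = \lambda$ this is exactly Lemma~\ref{Lweights} applied with $J = I$ (produce a nonzero monomial word $x^-_{\alpha_{i_N}}\cdots x^-_{\alpha_{i_1}} v_\lambda \in \vla_\mu$, and read off the partial-sum weights), while if $\mu' = \mu$ there is nothing to prove. For the general case $\lambda \ge \mu' \ge \mu$ one needs the analogous statement with $\lambda$ replaced by an arbitrary weight $\mu' \in \wt\vla$, which is the substance of Question~\ref{Qchain}; the honest content lies there.

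Next I would set up the main technical step: prove the theorem for $\vla = L(\lambda)$ a finite-dimensional simple module, i.e.\ when $\lambda \in P^+$ and $J(\vla) = I$. Here one writes $\mu' - \mu = \sum_i n_i \alpha_i \in Q^+$ and argues by induction on $\hgt(\mu' - \mu) = \sum_i n_i$. The key sublemma (this is Kumar's argument) is: if $\mu < \mu'$ are both in $\wt L(\lambda)$, then there exists a simple root $\alpha_i$ with $n_i > 0$ such that $\mu + \alpha_i \in \wt L(\lambda)$ \emph{and} $\mu + \alpha_i \le \mu'$. Granting this, remove $\alpha_i$ and recurse. To produce such an $i$, I expect to use the $W$-invariance of $\wt L(\lambda)$ together with an $\lie{sl}_2$-string / extremal-weight argument: pass to a dominant representative, use that $\lie n^+ L(\lambda)_\mu \ne 0$ when $\mu$ is not already the top of its root string in the relevant direction, and translate back — much as in the proof of Lemma~\ref{L12}. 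One must be careful that the chosen $\alpha_i$ keeps the partial sum $\le \mu'$, which is where the inductive bookkeeping (and possibly a minimal-counterexample choice of $\mu$ among weights $\le \mu'$) is needed.

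Having the finite-dimensional case, I would leverage Theorem~\ref{T1} and the machinery of $\varpi_J$ (Lemma~\ref{Lfacts}, Proposition~\ref{Pgvm}) to handle cases (2), (3), (4). By Theorem~\ref{T1}, $M(\lambda, J(\vla)) \twoheadrightarrow \vla$ and $\wt\vla$ is $W_{J(\vla)}$-stable; write $J := J(\vla)$. For case~(2) with $\mu' - \mu \in \Z_+ \Delta_J$, the segment from $\mu'$ down to $\mu$ lives entirely in a single $\lie{g}_J$-weight string: more precisely $\varpi_{J'}$ for a suitable $J'$ identifies $\{\nu \in \wt\vla : \mu \le \nu \le \mu'\}$ with weights of the finite-dimensional simple $\lie{g}_{J'}$-module $L_{J'}(\pi_{J'}(\lambda))$ (using $\wt_{J'}\vla = \wt U(\lie g_{J'}) v_\lambda$ and that $\mu' \in \lambda - \Z_+\Delta$), so the finite-dimensional case applies verbatim; for $\mu' - \mu \in \Delta$ there is nothing to do. For cases (3) and (4), by Theorem~\ref{Twtgvm} one has $\wt\vla = \wt M(\lambda, J) = \wt L_J(\lambda) - \Z_+(\Phi^+ \setminus \Phi^+_J)$, so every $\nu \in \wt\vla$ decomposes and the path can be built in two stages: first descend by simple roots in $I \setminus J$ (trivially available, since $\lambda - \Z_+\alpha_i \subset \wt\vla$ for $i \notin J$ and more generally each translate $\wt L_J(\lambda - \beta)$ sits inside $\wt\vla$), then descend within a single $\wt L_J(\mu'')$ fibre using the finite-dimensional case over $\lie g_J$.

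The main obstacle I anticipate is the finite-dimensional sublemma — specifically, choosing a simple root $\alpha_i$ that simultaneously lies in the support of $\mu' - \mu$, takes $\mu$ up to another weight of $L(\lambda)$, and does not overshoot $\mu'$. The naive ``$\lie n^+ L(\lambda)_\mu \ne 0$'' argument gives \emph{some} $\alpha_i$ raising $\mu$ into the weight set, but not the constraint $\mu + \alpha_i \le \mu'$; reconciling these likely requires either a clever minimal-counterexample setup (minimize $\hgt(\mu' - \mu)$ over all valid $\mu$, then derive a contradiction by a Weyl-reflection on the pair $(\mu, \mu')$) or an appeal to the explicit structure of $\wt L(\lambda) = (\lambda - \Z\Delta) \cap \calp(\lambda)$ and convexity of the Weyl polytope. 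Everything else — the reductions via $\varpi_J$, the $W_{J(\vla)}$-equivariance, the two-stage path construction — should be routine given the results already in the paper.
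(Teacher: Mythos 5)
Your skeleton is right for cases (1), (3) and (4): case (1) via Lemma \ref{Lweights}, and the two-stage descent (first by simple roots $\alpha_i$, $i\notin J'$, using the Minkowski-sum structure of $\wt M(\lambda,J')$, then inside a single $\lie{g}_{J'}$-constituent) is exactly how the paper reduces (4) to (2), and (3) to (4) via Theorem \ref{Twtgvm}. But two things stop this from being a proof. First, your reduction of case (2) is wrong as stated: the hypothesis is only that $\mu'-\mu\in\Z_+\Delta_{J(\vla)}$, not that $\lambda-\mu'\in\Z_+\Delta_{J(\vla)}$, so the interval $\{\nu\in\wt\vla:\mu\le\nu\le\mu'\}$ generally does not lie in $\wt_{J'}\vla=\wt U(\lie{g}_{J'})v_\lambda$ for any $J'\subset J(\vla)$, and $\varpi_{J'}$ does not identify it with weights of $L_{J'}(\pi_{J'}(\lambda))$. (Take $J(\vla)=\{1\}$, $\mu'=\lambda-\alpha_2$, $\mu=\mu'-\alpha_1$: here $\mu'\notin\lambda-\Z_+\Delta_{J(\vla)}$.) The relevant finite-dimensional $\lie{g}_J$-module must be generated by a vector of weight $\mu'$, not by $v_\lambda$. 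Second --- and this is the real content --- your ``finite-dimensional sublemma'' is essentially the theorem itself in its hard case, and you leave it unproved, offering only candidate strategies. Note also that the full strength of case (2) with $\mu'$ strictly below $\lambda$ is genuinely needed in stage two of your case (4) argument, since the intermediate weight $\mu''$ is not the highest weight of its fibre.

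The paper's argument for (2), due to Kumar, is the one your first candidate strategy gestures at, but with specific features you would need to supply. Take an inadmissible pair with $\hgt(\mu'-\mu)$ minimal and, among those, $\hgt(\lambda-\mu')$ minimal; set $J=\supp(\mu'-\mu)\subset J(\vla)$ and pick $0\ne v_{\mu'}\in\vla_{\mu'}$. If $\lie{n}^-_J v_{\mu'}\ne 0$, then $\mu'-\alpha_j\in\wt\vla$ for some $j\in J$ with $\mu'-\alpha_j\ge\mu$, and the primary minimality makes $(\mu,\mu'-\alpha_j)$, hence $(\mu,\mu')$, admissible --- contradiction. Otherwise $v_{\mu'}$ is a lowest-weight vector for $\lie{g}_J$, so $U(\lie{g}_J)v_{\mu'}\cong L_J(w^J_\circ\mu')$ is finite-dimensional; applying the longest element $w^J_\circ$ of $W_J$ (which preserves $\wt\vla$ since $J\subset J(\vla)$, by Theorem \ref{T1}) produces the pair $(w^J_\circ\mu',w^J_\circ\mu)$ with the same height gap but with $\hgt(\lambda-w^J_\circ\mu)<\hgt(\lambda-\mu')$, contradicting the secondary minimality; the chain for that pair is then transported back by $w^J_\circ$. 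The double minimization and the dichotomy on $\lie{n}^-_J v_{\mu'}$ are precisely the steps your sketch is missing.
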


\noindent It is also easy to check that the result holds in other cases:
\begin{itemize}
\item When $\lie{g} = \lie{sl}_2$ and $\vla$ is arbitrary (since every
$\vla$ is a parabolic Verma module).
\item If $\lambda, \mu' \in P^+$ and $\mu \in \conv_\R W (\lambda)$. In
this case, use Theorem \ref{Tklv} to show that $\mu, \mu' \in \wt
L(\lambda) \subset \wt \vla$. Now use Theorem \ref{Tkumar} for
$L(\lambda) = M(\lambda,I)$.
\end{itemize}

\begin{proof}
(The technical heart of this result is the proof assuming the second
condition; this was originally shown by S.~Kumar for finite-dimensional
$\vla$, in the same manner as below.)
Define any pair $(\mu, \mu')$ that satisfies Equation \eqref{Echain} to
be \textit{admissible}. If (1) holds or $\mu' - \mu \in \Delta$, then the
result follows from Lemma \ref{Lweights} (or is obvious). Now suppose (2)
holds and $\mu \leq \mu' \in \wt \vla$ is an inadmissible pair such that
$\mu' - \mu \in \Z_+ \Delta_{J(\vla)}$; we will then arrive at a
contradiction. Choose inadmissible $\mu \leq \mu' \in \wt \vla$ such that
$2 \leq \hgt(\mu' - \mu)$ is minimal. Further refine this choice such
that $1 \leq \hgt(\lambda - \mu')$ is minimal. Now define
\[
J := \supp (\mu' - \mu) = \{ i \in I : (\mu' - \mu, \omega_i) \neq 0 \}.
\]

\noindent Then $J \subset J(\vla)$ by assumption, so by Theorem \ref{T1},
$\wt \vla$ is $w^J_\circ$-stable, where $w^J_\circ \in W_J$ is the
longest element. 
Choose a nonzero vector $v_{\mu'} \in \vla_{\mu'}$. If $\lie{n}_J^-
v_{\mu'} \neq 0$, then there exists $j \in J$ such that $\mu' - \alpha_j
\in \wt \vla$. Since $\mu' - \alpha_j \geq \mu$, the pair $(\mu, \mu' -
\alpha_j)$ is admissible, whence so is $(\mu,\mu')$, a contradiction.
Hence $\lie{n}^-_J v_{\mu'} = 0$, whence $v_{\mu'}$ generates a lowest
weight $\lie{g}_J$-submodule. Since $\vla \in \mathcal{O}$, $U(\lie{g}_J)
v_{\mu'}$ is a finite-dimensional lowest weight $\lie{g}_J$-module,
whence it is in fact simple and isomorphic to $L_J(w^J_\circ \mu')$.
Moreover, $-\mu'(h_j) \in \Z_+$ for all $j \in J$ and hence,
\begin{equation}\label{Ekumar}
\mu' \leq w^J_\circ \mu' < w^J_\circ \mu \leq \lambda, \qquad
\hgt(w^J_\circ \mu - w^J_\circ \mu') = \hgt(\mu' - \mu), \qquad
\hgt(\lambda - w^J_\circ \mu) < \hgt(\lambda - \mu').
\end{equation}

\noindent Hence the pair $(w^J_\circ \mu', w^J_\circ \mu)$ is admissible,
whence there exists a chain of weights as asserted. As mentioned earlier
in this proof, $\wt \vla$ is $w^J_\circ$-stable, so applying $w^J_\circ$
to this chain of weights for $(w^J_\circ \mu', w^J_\circ \mu)$ yields the
desired chain from $\mu'$ to $\mu$ in $\wt \vla$. This is a
contradiction, and we are done if (2) holds.

Next if (3) holds, then $\wt \vla = \wt M(\lambda, J(\vla))$ by Theorem
\ref{Twtgvm}, so the result follows  from (4).
Finally, if $\vla = M(\lambda,J')$ for $J' \subset J_\lambda$, then
suppose $\mu = \mu' - \sum_{i \in I} n_i \alpha_i$ for some choice of
integers $n_i \in \Z_+$. By \cite[Proposition 2.3]{KR}, $\wt
M(\lambda,J')$ is stable under subtracting $\alpha_i$ for $i \notin J'$,
so we obtain a chain in $\wt M(\lambda,J')$ from $\mu'$ to $\mu'' := \mu'
- \sum_{i \notin J'} n_i \alpha_i$. Now apply the above analysis to
\[
\vla = M(\lambda, J'), \qquad \mu' \leadsto \mu'', \qquad \mu \leq \mu''
\in \wt \vla.
\]

\noindent This yields the desired chain in $\wt M(\lambda, J')$ from
$\mu''$ to $\mu$.
\end{proof}

\subsection*{Acknowledgments}
I would like to thank Michel Brion, Daniel Bump, Vyjayanthi Chari, James
Humphreys, and Shrawan Kumar for their stimulating conversations,
suggestions, and references, which were very beneficial at different
stages of this project.
I also thank Tullia Dymarz for pointing me to the reference \cite{Gr}
about distortion, as well as Brian Boe, Gurbir Dhillon, James Lepowsky,
and Volodymyr Mazorchuk for useful discussions.
Finally, I thank the referee for carefully going through the paper, and
for useful comments and suggestions that helped improve the exposition.



\end{document}